\newcommand{\R}{\mathbb{R}}
\newcommand{\N}{\mathbb{N}}
\newcommand{\Z}{\mathbb{Z}}
\renewcommand{\1}{\mathbbm{1}}
\definecolor{darkgreen}{rgb}{0,0.5,0}
\newcommand{\arxiv}[1]{\href{http://www.arxiv.org/abs/#1}{arxiv:#1}}
\newtheorem*{maintheorem}{Main Theorem}
\newtheorem{theorem}{Theorem}[section]
\newtheorem{proposition}[theorem]{Proposition}
\newtheorem{lemma}[theorem]{Lemma}
\newtheorem{corollary}[theorem]{Corollary}
\theoremstyle{definition}
\newtheorem{definition}[theorem]{Definition}
\newtheorem{note}[theorem]{Note}
\newcommand{\newword}[1]{\textbf{#1}}
\newcommand{\VG}{V\mathcal{G}}
\newcommand{\G}{\mathcal{G}}
\newcommand{\K}{\mathcal{K}}
\newcommand{\V}{\mathfrak{V}}
\renewcommand{\P}{\mathcal{P}}
\renewcommand{\K}{X_{\mathrm{Stein}}}
\newcommand{\Knew}{X_{\mathrm{poly}}}   
\newcommand{\dlink}{\mathrm{lk}{\downarrow}}
\DeclareMathOperator{\stope}{psim}
\DeclareMathOperator{\supp}{supp}
\DeclareMathOperator{\rank}{rank}
\begin{document}

\title{R\"{o}ver's Simple Group is of Type $F_\infty$}

\author{James Belk}
\address{Mathematics Program, Bard College, Annandale-on-Hudson, NY 12504, USA.}
\email{\href{mailto:belk@bard.edu}{belk@bard.edu}}
\urladdr{\url{http://math.bard.edu/belk}}

\author{Francesco Matucci}
\address{Centro de \'Algebra da Universidade de Lisboa,
Avenida Professor Gama Pinto 2,
1649-003 Lisbon, Portugal.}
\email{\href{mailto:francesco.matucci@campus.ul.pt}{francesco.matucci@campus.ul.pt}}

\keywords{Thompson's groups, Grigorchuk's group, finiteness properties, polysimplicial complex}
\subjclass[2010]{20F65, 20J05, 20E08}

\begin{abstract}
We prove that Claas R\"{o}ver's Thompson-Grigorchuk simple group $\VG$ has type $F_\infty$.  The proof involves constructing two complexes on which $\VG$ acts: a simplicial complex analogous to the Stein complex for~$V$, and a polysimiplical complex analogous to the Farley complex for~$V$. We then analyze the descending links of the polysimplicial complex, using a theorem of Belk and Forrest to prove increasing connectivity.
\end{abstract}

\maketitle

\section{Introduction}
Let $\VG$ be the group of homeomorphisms of the Cantor set generated by Thompson's group $V$ and Grigorchuk's first group~$\G$.  This group was considered by Claas R\"{o}ver, who proved that $\VG$ is finitely presented and simple \cite{Rover1}, and also that $\VG$ is isomorphic to the abstract commensurator of~$\G$~\cite{Rover2}.

Recall that a group $G$ has \newword{type $\boldsymbol{F_\infty}$} if there exists a classifying space for $G$ with finitely many cells in each dimension.  The three Thompson groups $F$, $T$, and $V$ have type $F_\infty$ \cite{Brown1, BrownGeoghegan}, as do many of their variants such as the generalized groups $F_{n,k}$, $T_{n,k}$ and $V_{n,k}$~\cite{Brown1}, certain diagram groups~\cite{Farley1} and picture groups~\cite{Farley3}, braided Thompson groups~\cite{Bux}, higher-dimensional groups~$nV$~\cite{Fluch, KMN}, and various other generalizations \cite{BelkForrest, Cleary1, Cleary2, FarleyHughes, MMN, Matui}.

We prove the following theorem.

\begin{maintheorem}R\"{o}ver's group $\VG$ has type $F_\infty$.
\end{maintheorem}

Our basic approach is quite similar to that used in \cite{Bux} for the braided Thompson groups and in \cite{Fluch} for the higher-dimensional groups~$nV$.  Specifically, we begin by constructing a ranked poset $\mathcal{P}$ on which $\VG$ acts, and we show that the geometric realization $|\mathcal{P}|$ is contractible.  Next, we construct a contractible $\VG$-invariant subcomplex $\K$ of $|\mathcal{P}|$, which we refer to as the
Stein complex (see~\cite{Stein}), and we analyze the descending links of $\K$ with respect to the filtration induced by the rank.

Our methods for analyzing the descending links are new, and are simpler than those used in~\cite{Fluch}.  Specifically, we show that the Stein complex $\K$ is a simplicial subdivision of a certain complex $\Knew$ whose cells are products of simplices.  The descending links for this complex are flag complexes, and we use a simple combinatorial criterion (Theorem~\ref{thm:Grounded}) due to Belk and Forrest to prove that the connectivity of these flag complexes approaches infinity.

Nekrashevych \cite{Nek} has introduced a generalized family over R\"{o}ver-
type groups, obtained by combining a generalized Thompson's group $V_{n,1}$ with any self-similar group acting on an infinite rooted $n$-ary tree.
Unfortunately, our proof is very dependent on specific properties of the
Grigorchuk group, and does not generalize in an obvious way to the
Nekrashevych family of groups.  In particular, we use the fact that
Grigorchuk's group $\G$ is generated by a finite subgroup together with
certain elements of~$V$.  There are other self-similar groups with
analogous properties, e.g.~the Gupta-Sidki groups, and it should be
possible to modify our proof to work for these as well.

During the preparation of this manuscript, the authors became aware of some overlapping work by Geoghegan and Bartholdi~\cite{GeogBart}. Using somewhat different techniques, they prove that every R\"{o}ver-type group has type~$F_\infty$, provided that the underlying self-similar group is contracting.

\bigskip
\section{Notation and Background}
In this section we recall the necessary background material for Thompson's group~$V$, the first Grigorchuk group~$\mathcal{G}$, and R\"{o}ver's group~$\VG$.  We also extend~$V$ and $\VG$ to groupoids $\V$ and $\V\G$, respectively, which we will be using to define our complexes.  We present many results without proof, but in most cases the proofs can be found in either~\cite{PDLH} (for results on~$\G$), \cite{CFP} (for results on~$V$), or~\cite{Rover1} (for results on~$\VG$)

We will use the following notation.
\begin{itemize}
\item Throughout this paper functions are assumed to act on the left, with the product $fg$ denoting the composition $(fg)(x) = f(g(x))$.\smallskip
\item For each $n\in\N$, let $C(n)$ denote the disjoint union of $n$ copies of the Cantor set.
These will be the objects of the groupoids $\V$ and~$\V\G$. The first of these objects $C(1)$ is the ``canonical'' Cantor set, on which both Thompson's group $V$ and Grigorchuk's group~$\G$ act by homeomorphisms.\smallskip
\item If $f\colon C(m) \to C(m')$ and $g\colon C(n) \to C(n')$ are homeomorphisms, their \newword{direct sum} is the homeomorphism
\[
f\oplus g\colon C(m+n) \to C(m'+n')
\]
which maps the first $m$ Cantor sets of the domain to the first $m'$ Cantor sets of the range via~$f$, and maps the remaining $n$ domain Cantor sets to the remaining $n'$ range Cantor sets via~$g$.\smallskip
\item If $\alpha\in S_n$ is a permutation, the corresponding \newword{permutation homeomorphism} $p_\alpha\colon C(n)\to C(n)$ is the homeomorphism that permutes the Cantor sets of $C(n)$ according to~$\alpha$.\smallskip
\item Let $x\colon C(1) \to C(2)$ denote the \newword{split homeomorphism}, which maps the first half of $C(1)$ to the first Cantor set of~$C(2)$, and maps the second half of $C(1)$ to the second Cantor set of~$C(2)$.
\end{itemize}
Note that conjugating a direct sum of homeomorphisms of $C(1)$ by a permutation homeomorphism permutes the components of the sum,~i.e.
\[
(f_1\oplus\cdots\oplus f_n)p_\alpha \;=\; p_\alpha(f_{\alpha(1)}\oplus\cdots\oplus f_{\alpha(n)})
\]
for any homeomorphisms $f_i\colon C(1) \to C(1)$ and any $\alpha\in S_n$.

\subsection{Thompson's group $V$ and the groupoid $\V$}
For $n\in\N$ and $i\in\{1,\ldots,n\}$, let $x_i^{(n)}\colon C(n) \to C(n+1)$ denote the $i$th \newword{split homeomorphism},~i.e.
\[
x_i^{(n)} \;=\; \mathrm{id}_{i-1} \oplus x \oplus \mathrm{id}_{n-i}
\]
where $\mathrm{id}_k$ denotes the identity map on $C(k)$.

\begin{note}We will usually omit the parenthesized superscripts on split homeomorphisms, e.g.~writing $x_3$ instead of $x_3^{(5)}$.  In this case, the domain and range of $x_3$ must be determined from context.
\end{note}

We will refer to any composition of split homeomorphisms as a \newword{binary forest}.  If $f\colon C(m)\to C(n)$ is a binary forest, the $m$ Cantor sets in the domain of $f$ are called \newword{roots}, and the $n$ Cantor sets in the range are called \newword{leaves}.  A binary forest whose domain is $C(1)$ (so there is only one root) is called a \newword{binary tree}.

We can use binary forests to expand permutations, as described in the following proposition.

\begin{proposition}\label{prop:PermutationSplit}
Let $\alpha\in S_m$ be a permutation, and let $f\colon C(m) \to C(n)$ be a binary forest.  Then there exists a binary forest $f'\colon C(m)\to C(n)$ and a permutation $\alpha'\in S_n$ so that $f\alpha = \alpha' f'$.\hfill\qedsymbol
\end{proposition}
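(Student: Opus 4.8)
The plan is to prove Proposition~\ref{prop:PermutationSplit} by induction on the number of split homeomorphisms composing $f$. The base case is $f = \mathrm{id}_{C(m)}$ (a composition of zero splits), where we may simply take $f' = \mathrm{id}_{C(m)}$ and $\alpha' = \alpha$. For the inductive step, it suffices to handle the case where $f = x_i^{(m)}$ is a single split: if we can always rewrite $x_i^{(m)}\alpha$ as $\alpha'' x_j^{(m)}$ for some permutation $\alpha''\in S_{m+1}$ and some index $j$, then the general case follows by peeling off one split at a time from the left and applying the single-split case repeatedly, composing the resulting permutations.

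**The single-split computation** is the heart of the argument. Write $f = x_i^{(m)} = \mathrm{id}_{i-1}\oplus x\oplus\mathrm{id}_{m-i}$, and suppose $\alpha\in S_m$ sends position $j$ to position $i$, i.e.\ $\alpha(j) = i$. The claim is that $x_i^{(m)}\,p_\alpha = p_{\alpha'}\,x_j^{(m)}$ for an appropriate $\alpha'\in S_{m+1}$. Intuitively, $p_\alpha$ shuffles the $m$ input Cantor sets so that the one in position $j$ lands in position $i$, and then $x_i^{(m)}$ splits that Cantor set in two; performing the operations in the other order, we first split the Cantor set in position $j$ (via $x_j^{(m)}$, producing $m+1$ Cantor sets) and then apply a permutation $\alpha'$ of these $m+1$ sets that keeps the two halves adjacent and in order, routing the block of two to positions $i, i+1$ and the remaining $m-1$ sets to the complementary positions in the order dictated by $\alpha$. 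Concretely, $\alpha'$ is obtained from $\alpha$ by ``doubling'' the entry at position $j$: relabel so that everything below $i$ in the target is as before, the pair $(i,i+1)$ receives the split halves, and everything at target position $\ge i+1$ under $\alpha$ is shifted up by one. One checks directly from the identity $(f_1\oplus\cdots\oplus f_n)p_\alpha = p_\alpha(f_{\alpha(1)}\oplus\cdots\oplus f_{\alpha(n)})$ recorded just before this subsection — together with the obvious compatibilities of $\oplus$ with composition — that this $\alpha'$ works.

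**The main obstacle** is purely bookkeeping: writing down $\alpha'$ explicitly as a function $\{1,\dots,m+1\}\to\{1,\dots,m+1\}$ and verifying the equation $x_i^{(m)}p_\alpha = p_{\alpha'}x_j^{(m)}$ without index errors. There is no conceptual difficulty — the split homeomorphism $x$ is a single fixed map, so all the content is in tracking how the positions of the Cantor sets move under composition — but the formula for $\alpha'$ in terms of $\alpha$, $i$, and $j$ requires care with the off-by-one shifts around position $i$. Once the single-split case is established with its explicit $\alpha'$, the induction is immediate: if $f = g\circ x_i^{(m)}$ with $g$ a binary forest on fewer splits, apply the single-split case to get $x_i^{(m)}\alpha = \alpha'' x_j^{(m)}$, then apply the inductive hypothesis to $g\,\alpha''$ to obtain $g\alpha'' = \alpha' g'$, and conclude $f\alpha = \alpha' g' x_j^{(m)}$, where $g' x_j^{(m)}$ is again a binary forest $C(m)\to C(n)$.
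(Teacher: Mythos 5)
Your proof is correct; the paper states this proposition without proof (it is marked as well known), and your induction on the number of splits, reducing to the single-split identity $x_i^{(m)}p_\alpha = p_{\alpha'}x_j^{(m)}$ with $j=\alpha^{-1}(i)$, is exactly the routine argument the authors are taking for granted --- indeed they invoke that same single-split identity explicitly later, in the proof of Proposition~\ref{prop:ClassifySplittings}. Your description of $\alpha'$ as the block permutation obtained by ``doubling'' the entry at position $j$ is the right bookkeeping, and the inductive step is assembled correctly.
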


Let $\V$ be the groupoid with objects $\{C(n) \mid n\in\N\}$ generated by all split homeomorphisms and all permutation homeomorphisms.  Then the group of elements of~$\V$ that map $C(1)$ to $C(1)$ is \newword{Thompson's group $V$}.

Geometrically, elements of $\V$ can be thought of as braided diagrams (see \cite{GuSa}) or equivalently as abstract strand diagrams~(see \cite{BeMa}).  The following proposition is well-known.

\begin{proposition}
\label{thm:groupoid-V-correct-form}
Every element of\/ $\V$ can be written as $f_2^{-1}p_\alpha f_1$, where $f_1$ and $f_2$ are binary forests and $\alpha$ is a permutation.  In particular, every element of\/~$V$ can be written as $t_2^{-1}p_\alpha t_1$, where $t_1$ and $t_2$ are binary trees.\hfill\qedsymbol
\end{proposition}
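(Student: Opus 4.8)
The plan is to prove that the subset $S\subseteq\V$ consisting of all elements of the form $f_2^{-1}p_\alpha f_1$ (with $f_1,f_2$ binary forests and $\alpha$ a permutation) is in fact all of $\V$. Since $\V$ is generated by the split and permutation homeomorphisms, every element of $\V$ is a finite composite of maps of the form $x_i$, $x_i^{-1}$, and $p_\beta$; and every identity map lies in $S$ (take $f_1=f_2$ a trivial forest and $\alpha=\mathrm{id}$). Hence it suffices to show that $S$ is closed under left-multiplication by $x_i$, by $x_i^{-1}$, and by $p_\beta$ whenever the relevant composite is defined, after which an easy induction on composite length gives $S=\V$.

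So fix $g=f_2^{-1}p_\alpha f_1\in S$. Left-multiplication by $x_i^{-1}$ is immediate: $x_i^{-1}g=(f_2 x_i)^{-1}p_\alpha f_1$, and $f_2 x_i$ is again a binary forest. For $p_\beta$, I would apply Proposition~\ref{prop:PermutationSplit} to the forest $f_2$ and the permutation $\beta^{-1}$ and then invert, obtaining $p_\beta f_2^{-1}=(f_2')^{-1}p_{\beta'}$ for some binary forest $f_2'$ and permutation $\beta'$; combined with $p_{\beta'}p_\alpha=p_{\beta'\alpha}$ this gives $p_\beta g=(f_2')^{-1}p_{\beta'\alpha}f_1\in S$. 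The only case with real content is left-multiplication by a split $x_i$, where one must first make sense of the composite $x_i f_2^{-1}$ of a split with an inverse forest. Here I would invoke the standard fact that two binary forests with the same number of roots admit a common expansion: applied to $x_i$ and $f_2$ it yields binary forests $c,d$ with $c\,x_i=d\,f_2$, whence $x_i f_2^{-1}=c^{-1}d$. (Alternatively, one can push the single split to the right through the inverse splits constituting $f_2^{-1}$, using the elementary relations $x_k x_\ell^{-1}=x_{\ell'}^{-1}x_{k'}$ and $x_k x_k^{-1}=\mathrm{id}$ for suitable indices $k',\ell'$.) Now $x_i g=c^{-1}(d\,p_\alpha)f_1$, and Proposition~\ref{prop:PermutationSplit} applied to $d$ and $\alpha$ gives $d\,p_\alpha=p_{\alpha'}d'$ with $d'$ a binary forest, so that $x_i g=c^{-1}p_{\alpha'}(d'f_1)\in S$, since $d'f_1$ and $c$ are binary forests.

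This establishes $S=\V$. The ``in particular'' statement then follows at once: if $g\in V$ then $g$ has domain and range $C(1)$, so in an expression $g=f_2^{-1}p_\alpha f_1$ both $f_1$ and $f_2$ have exactly one root, i.e.\ are binary trees. I expect the common-expansion step to be the main (really, the only) obstacle, together with the bookkeeping forced by the asymmetry of Proposition~\ref{prop:PermutationSplit}: a permutation sitting to the \emph{right} of a forest can always be moved to its left, but not conversely, so the argument must be organized so that only permutation moves in the permitted direction (equivalently, moving a permutation from the left of an inverse forest to its right) are ever used.
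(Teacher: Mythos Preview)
Your argument is correct. The paper itself supplies no proof: the proposition is stated as ``well-known'' (with a reference to braided diagrams / strand diagrams in \cite{GuSa} and \cite{BeMa}) and carries a terminal \qedsymbol. So there is no approach to compare against; you have simply filled in the details that the authors chose to omit.

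For the record, all three closure steps go through as you describe. The only step that deserves a word is the common-expansion fact you invoke for $x_i$ and $f_2$: two binary forests with the same number of roots always admit a common refinement, since this reduces (root by root) to the statement that any two binary trees have a common expansion, which is classical. Once $x_i f_2^{-1}=c^{-1}d$ is in hand, your use of Proposition~\ref{prop:PermutationSplit} (in the direction it is actually stated, as you rightly insist) finishes the job. Your closing remark about the ``in particular'' clause is also correct: a forest with one root is by definition a binary tree.
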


\subsection{Grigorchuk's Group}
\label{ssec:grigo-group}
Let $\sigma$, $b$, $c$, and $d$ be the homeomorphisms of $C(1)$ defined by the following equations
\[
\sigma  \;=\; x^{-1}p_{(1\;2)}x,\;\;\; b  \;=\; x^{-1}(\sigma \oplus c)x,\;\;\; c \;=\; x^{-1}(\sigma \oplus d)x,\;\;\; d \;=\; x^{-1}(\1 \oplus b)x,
\]
where $\1$ denotes the identity homeomorphism on $C(1)$.  Note that these equations define $b$, $c$ and $d$ uniquely through recursion.

The group $\G = \langle\sigma,b,c,d\rangle$ is known as the \newword{first Grigorchuk group}.  See \cite{PDLH} for a general introduction to~$\G$, including the following proposition.

\begin{proposition}The generators $\sigma$, $b$, $c$, and $d$ all have order two.  Moreover, the four element set
\[
K \;=\; \{\1,b,c,d\}
\]
is a subgroup of $\G$ isomorphic to the Klein four-group.
\hfill\qedsymbol
\end{proposition}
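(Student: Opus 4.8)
The plan is to derive both statements from the principle already invoked to make sense of the definitions of $b$, $c$, and $d$: a system of equations of the form $g_i=x^{-1}(h_{i,1}\oplus h_{i,2})x$, in which each $h_{i,j}$ is either a previously known homeomorphism of $C(1)$ or one of the unknowns $g_k$, has at most one solution among homeomorphisms of $C(1)$. (This is the standard fact that a self-similar family of tree automorphisms is determined by its recursion.) Throughout I will use the elementary identities $x^{-1}x=\mathrm{id}_1$, $\;xx^{-1}=\mathrm{id}_2$, and $(f_1\oplus g_1)(f_2\oplus g_2)=(f_1f_2)\oplus(g_1g_2)$ for composable homeomorphisms, all of which hold in $\V$.

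The order of $\sigma$ is immediate: $\sigma^2=x^{-1}p_{(1\;2)}(xx^{-1})p_{(1\;2)}x=x^{-1}p_{(1\;2)}^2x=x^{-1}x=\mathrm{id}_1$, and $\sigma\neq\mathrm{id}_1$ because $\sigma$ interchanges the two halves of $C(1)$. For $b$, $c$, and $d$ I would handle the three simultaneously. Squaring their defining equations and using $\sigma^2=\mathrm{id}_1$ yields
\[
b^2=x^{-1}(\1\oplus c^2)x,\qquad c^2=x^{-1}(\1\oplus d^2)x,\qquad d^2=x^{-1}(\1\oplus b^2)x,
\]
a recursion of the admissible shape. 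Since $x^{-1}(\1\oplus\1)x=x^{-1}x=\mathrm{id}_1$, the triple $(\1,\1,\1)$ is a solution, so by uniqueness $b^2=c^2=d^2=\1$. (If one prefers to avoid the general principle here, these equations can be unwound by hand to show that $b^2$, $c^2$, and $d^2$ each fix a dense subset of $C(1)$, hence are the identity by continuity.)

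For the subgroup $K$ I would run the same computation with products in place of squares. One finds
\[
bc=x^{-1}(\1\oplus cd)x,\qquad cd=x^{-1}(\sigma\oplus db)x,\qquad db=x^{-1}(\sigma\oplus bc)x,
\]
and the defining equations of $d$, $b$, and $c$ say precisely that $(d,b,c)$ is also a solution of this system; by uniqueness $(bc,cd,db)=(d,b,c)$, i.e.\ $bc=d$, $cd=b$, $db=c$. Together with the order-two relations (which also give $cb=(bc)^{-1}=d$, $dc=b$, $bd=c$) this exhibits $K=\{\1,b,c,d\}$ as a subgroup, abelian, with every non-identity element an involution. A short inspection of the actions on the first two levels of the binary tree shows $\1$, $b$, $c$, $d$ are pairwise distinct: $b$ and $c$ restrict to $\sigma$ on the left half of $C(1)$ while $d$ and $\1$ restrict to the identity there, $c$ and $d$ (and hence $b$ and $c$) differ on the left half as well, and $d$ restricts to $b\neq\1$ on the right half. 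Thus $|K|=4$ and $K\cong\Z/2\times\Z/2$.

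The only step that requires genuine care is the uniqueness principle underlying all three computations; since it is already tacitly used in the statement's own definitions of $b$, $c$, and $d$, a careful write-up would merely record its proof (induction on the depth of a cylinder set, equivalently density of the eventually periodic points of $C(1)$). Everything else is a short formal manipulation, and one could alternatively simply cite \cite{PDLH}.
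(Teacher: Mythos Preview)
Your argument is correct and is the standard way these facts are verified. The paper, however, gives no proof at all: the proposition is stated with a bare \qedsymbol, and the surrounding text simply refers the reader to~\cite{PDLH}. So you have supplied a genuine proof where the paper is content to cite the literature. Your final remark (``one could alternatively simply cite~\cite{PDLH}'') is exactly what the paper does. One small wording quibble: when you say ``$c$ and $d$ (and hence $b$ and $c$) differ on the left half as well,'' the conclusion $b\neq c$ actually follows from their restrictions to the \emph{right} half (namely $c$ versus~$d$), not the left; the logic is fine but the phrasing could be sharpened.
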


Now, if $g$ is any element of Grigorchuk's group, then either
\[
xg = (g_1 \oplus g_2)x   \qquad\text{or}\qquad xg = p_{(1\;2)}(g_1\oplus g_2)x
\]
for some $g_1,g_2\in G$.  More generally, we can expand $g$ along any binary tree, as described in the following proposition.

\begin{proposition}
\label{prop:GrigoSplit}
If $g\in\G$ and $t\colon C(1)\to C(n)$ is a binary tree, then
\[
tg \;=\; p_\alpha(g_1 \oplus\cdots\oplus g_n)t'
\]
for some binary tree $t'\colon C(1)\to C(n)$, some $g_1,\ldots,g_n\in\G$, and some permutation $\alpha\in S_n$.

More generally, if $g_1,\ldots,g_m\in \G$ and $f\colon C(m)\to C(n)$ is a binary forest, then
\[
f(g_1\oplus\cdots\oplus g_n) \;=\; p_\alpha(g_1'\oplus\cdots\oplus g_n')f'
\]
for some binary forest $f'\colon C(m)\to C(n)$, some $g_1',\ldots,g_n'\in\G$, and some permutation $\alpha\in S_n$.\hfill\qedsymbol
\end{proposition}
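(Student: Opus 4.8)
I would first record the only non-formal ingredient: the assertion, stated just before the proposition, that for each $g\in\G$ there are $g_1,g_2\in\G$ and $\gamma\in S_2$ with $xg=p_\gamma(g_1\oplus g_2)x$. This is itself the $n=2$ case of the proposition, and it holds because the set of $g\in\G$ for which it is true is a subgroup of $\G$: closure under products and inverses needs only the identity $(a\oplus b)(c\oplus d)=(ac)\oplus(bd)$, the commutation rule $(f_1\oplus\cdots\oplus f_n)p_\alpha=p_\alpha(f_{\alpha(1)}\oplus\cdots\oplus f_{\alpha(n)})$, and the fact that products of permutation homeomorphisms are again permutation homeomorphisms; while the recursive defining equations show $\sigma,b,c,d$ all lie in this set, since $x\sigma=p_{(1\;2)}(\1\oplus\1)x$, $xb=(\sigma\oplus c)x$, $xc=(\sigma\oplus d)x$, and $xd=(\1\oplus b)x$. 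Hence the assertion holds for all of $\G$.

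Granting this, I would prove the tree version by induction on the number $n$ of leaves of $t$. The case $n=1$ is trivial ($t=\mathrm{id}_1$). For $n\geq 2$, the first split of $t$ is forced to be $x\colon C(1)\to C(2)$, so $t=(t_1\oplus t_2)x$ with $t_i\colon C(1)\to C(n_i)$ a binary tree, $n_i\geq 1$, $n_1+n_2=n$. Apply the $n=2$ case to $g$ to write $xg=p_\gamma(h_1\oplus h_2)x$; then move $p_\gamma$ to the left of the binary forest $t_1\oplus t_2$ using Proposition~\ref{prop:PermutationSplit}, and regroup using $(a\oplus b)(c\oplus d)=(ac)\oplus(bd)$, to obtain
\[
tg \;=\; p_{\gamma'}\bigl((s_1 h_1)\oplus(s_2 h_2)\bigr)x
\]
for some $\gamma'\in S_n$ and binary trees $s_i\colon C(1)\to C(n_i')$ with $n_1'+n_2'=n$; in particular each $n_i'<n$. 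Now apply the induction hypothesis to each $s_i h_i$, writing $s_i h_i=p_{\beta_i}(k_{i,1}\oplus\cdots\oplus k_{i,n_i'})u_i$ with $u_i$ a binary tree, substitute, pull the permutation homeomorphism $p_{\beta_1}\oplus p_{\beta_2}$ to the left, and set $t'=(u_1\oplus u_2)x$; this exhibits $tg=p_\alpha(g_1'\oplus\cdots\oplus g_n')t'$ in the required form.

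The forest version then follows at once. Any binary forest $f\colon C(m)\to C(n)$ is a direct sum $f=f_1\oplus\cdots\oplus f_m$ of binary trees $f_j\colon C(1)\to C(n_j)$ with $n_1+\cdots+n_m=n$, so $f(g_1\oplus\cdots\oplus g_m)=(f_1 g_1)\oplus\cdots\oplus(f_m g_m)$; applying the tree version to each $f_j g_j$ and then gathering the $m$ resulting permutation homeomorphisms into a single block permutation $p_\alpha$ and the $m$ trailing binary trees into a single binary forest $f'=f_1'\oplus\cdots\oplus f_m'$ yields $f(g_1\oplus\cdots\oplus g_m)=p_\alpha(g_1'\oplus\cdots\oplus g_n')f'$.

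I do not anticipate a conceptual obstacle. The only real content is the $n=2$ case, which falls out of the recursive definitions of $\sigma,b,c,d$; the rest is an induction whose one delicate point is bookkeeping --- systematically using the commutation rule for permutations past direct sums and Proposition~\ref{prop:PermutationSplit} to push every permutation homeomorphism to the far left, and reassociating direct sums so the remaining maps assemble into a single binary tree (respectively forest) on the right. Minor care is also needed for the elementary structural facts that a binary tree with $\geq 2$ leaves begins with $x$ and that a binary forest on $m$ roots is a direct sum of $m$ binary trees.
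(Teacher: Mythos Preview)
The paper does not actually prove this proposition: it is stated with a bare \qedsymbol\ and, as announced at the start of Section~2, is treated as one of the background facts whose proofs can be found in the references on~$\G$. So there is no ``paper's own proof'' to compare against; your proposal supplies an argument where the paper supplies none.

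Your argument is correct. The reduction to the $n=2$ case via the subgroup-and-generators observation is clean and is exactly the right idea: it packages the self-similarity of $\G$ (which is what the paper is implicitly invoking) into a one-line verification on $\sigma,b,c,d$. The induction on the number of leaves is straightforward once you have that, and your bookkeeping is accurate --- in particular the use of Proposition~\ref{prop:PermutationSplit} to pass $p_\gamma$ leftward through $t_1\oplus t_2$, and the observation that each $n_i'<n$ so the induction applies. The derivation of the forest case from the tree case is likewise fine.

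The only points you flag as needing ``minor care'' are genuine but routine: that a binary tree with at least two leaves factors as $(t_1\oplus t_2)x$, and that a binary forest on $m$ roots is a direct sum of $m$ binary trees. Both follow from the elementary commutation relations among split homeomorphisms acting on distinct components (e.g.\ $x_i x_j = x_{j+1} x_i$ for $i<j$), which let you sort the splits by root. If you were writing this up formally you would want a sentence or two making this explicit, but it is standard and not a gap in the argument.
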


The following proposition states that any element of $\G$ can be expanded to a particularly simple form.

\begin{proposition}\label{prop:GrigoForm}Let $g\in\G$.  Then there exist binary trees $t_1,t_2\colon C(1)\to C(n)$ so that
\[
g \;=\; t_2^{-1}p_\alpha(k_1\oplus\cdots\oplus k_n)t_1
\]
for some $\alpha\in S_n$ and $k_1,\ldots,k_n \in \{\1,b,c,d\}$.
\end{proposition}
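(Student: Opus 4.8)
The plan is to start from the recursive definitions of $b$, $c$, and $d$ and iterate them, tracking how the Klein-four-group elements appear on progressively deeper subtrees. Concretely, applying the defining equations once gives, for each $k \in \{b,c,d\}$, an equation of the form $xk = p_\beta(\sigma^{\epsilon} \oplus k')x$ or $xk = (\sigma^{\epsilon}\oplus k')x$, where $k' \in \{b,c,d,\1\}$ and $\sigma^\epsilon$ is either $\sigma$ or $\1$. (For $d$ the left factor is $\1$ rather than $\sigma$.) The key observation is that $\sigma$ itself lies in $V$: from the definition $\sigma = x^{-1}p_{(1\,2)}x$ we get $x\sigma = p_{(1\,2)}x$, so $\sigma = x_1^{-1}p_{(1\,2)}x_1$ is already of the form $t_2^{-1}p_\alpha t_1$. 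This will let us ``absorb'' the stray $\sigma$'s into the permutation part.

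First I would prove the statement for a single generator $k \in K = \{\1,b,c,d\}$ by a direct argument. Expand $k$ along a deep enough left-combed binary tree: using Proposition~\ref{prop:GrigoSplit} repeatedly (or rather iterating the length-one recursion above), for any $n$ we obtain $t_1 k = p_\alpha (h_1 \oplus \cdots \oplus h_n) t_1$ for a suitable binary tree $t_1 \colon C(1) \to C(n)$, a permutation $\alpha \in S_n$, and elements $h_i \in \G$ that are each either trivial, an element of $K$, or a single $\sigma$. Now I handle the factors $h_i = \sigma$ one at a time: since $x\sigma = p_{(1\,2)}x$, applying a further split $x_i$ to the leaf carrying $\sigma$ replaces that $\sigma$ by a transposition, i.e. $x_i(\cdots \oplus \sigma \oplus \cdots) = (\cdots)\,x_i \cdot (\text{conjugate of } p_{(1\,2)})$; pushing that transposition to the left through the binary forest via Proposition~\ref{prop:PermutationSplit} merges it into the permutation. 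After doing this for each $\sigma$-leaf we arrive at $t_1 k = p_{\alpha'}(k_1 \oplus \cdots \oplus k_m)t_1'$ with all $k_i \in K$; rearranging gives $k = (t_1')^{-1} p_{\alpha'}^{-1} (k_1 \oplus \cdots \oplus k_m) t_1$, which after relabeling $t_2 = t_1'$ is exactly the desired form (using that $p_\alpha^{-1} = p_{\alpha^{-1}}$ and that a conjugate of a direct sum of $K$-elements by a permutation is again such a direct sum up to reordering, by the identity displayed right after the notation list).

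Finally, for an arbitrary $g \in \G$, I would argue by induction on word length in the generators $\{\sigma,b,c,d\}$. The base cases $g = \1$ and $g$ a single generator are covered: $\sigma$ directly, and $b,c,d$ by the paragraph above. For the inductive step, write $g = g' s$ with $s$ a generator and $g'$ shorter; by induction $g' = t_2^{-1} p_\alpha (k_1 \oplus \cdots \oplus k_n) t_1$. If $s = \sigma$, then $t_1 \sigma = p_\gamma t_1'$ for a suitable tree $t_1'$ and permutation $\gamma$ (expand and push the transposition left as above), so $g = t_2^{-1} p_\alpha (k_1\oplus\cdots\oplus k_n) p_\gamma t_1'$, and the $p_\gamma$ permutes the summands into another direct sum of $K$-elements — done. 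If $s \in \{b,c,d\}$, then by the single-generator case $s = u_2^{-1} p_\delta (\ell_1 \oplus\cdots\oplus\ell_p) u_1$; apply Proposition~\ref{prop:GrigoSplit} to slide the block $k_1\oplus\cdots\oplus k_n$ past $t_1 u_2^{-1}$ so that both $t_1$ and $u_2^{-1}$ refer to a common refinement, absorb the resulting permutation, and recombine. The main obstacle I anticipate is purely bookkeeping: keeping the permutations, the reindexing of summands, and the choice of common binary-tree refinements straight through these manipulations — there is no deep idea beyond "$\sigma \in V$ and $K$-elements are stable under the recursion up to a stray $\sigma$," but the combinatorics of matching up trees and forests needs to be written carefully, likely with a lemma stating that any finite collection of binary trees on $C(1)$ has a common binary-tree refinement.
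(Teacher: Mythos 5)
Your route is genuinely different from the paper's. The paper simply cites the fact that $\G$ is a contracting self-similar group with nucleus $\{\1,\sigma,b,c,d\}$, so that every $g\in\G$ is already of the form $t^{-1}p_\alpha(k_1\oplus\cdots\oplus k_n)t$ with nucleus entries at the leaves, and then performs exactly your $\sigma$-elimination step: split each leaf carrying a $\sigma$ and absorb the resulting transposition into the permutation via $x\sigma=p_{(1\,2)}x$. Your single-generator analysis and the absorption of stray $\sigma$'s are correct and coincide with that second step. What you are attempting in addition is a self-contained, citation-free proof of the contraction statement by induction on word length --- a reasonable and more elementary goal.

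The gap is in the inductive step, hidden in the word ``recombine.'' After you slide $(k_1\oplus\cdots\oplus k_n)$ and $(\ell_1\oplus\cdots\oplus\ell_p)$ to a common tree refinement and merge the permutations, each leaf of the common tree carries a \emph{product} $hh'$ of two nucleus elements (a state of some $k_i$ times a state of some $\ell_j$). Such a product need not lie in $K$ and need not be a single nucleus element: $\sigma b$ and $b\sigma$, for instance, are words of length two, so your induction on word length cannot be applied to them without circularity --- processing a length-two word by the same step again produces length-two products at the leaves, and the induction never terminates. (Note also that Proposition~\ref{prop:GrigoSplit}, as stated, only returns \emph{arbitrary} elements of $\G$ at the leaves, so quoting it does not by itself keep the leaf labels under control.) What closes the argument is a finite explicit check that every product of two nucleus elements, after at most one further split of that leaf, again has single nucleus elements at its leaves: $kk'\in K$ for $k,k'\in K$, $\sigma\sigma=\1$, and for the mixed cases one computes, e.g., $x\sigma b = p_{(1\,2)}xb = p_{(1\,2)}(\sigma\oplus c)x$ and $xb\sigma = (\sigma\oplus c)p_{(1\,2)}x = p_{(1\,2)}(c\oplus\sigma)x$. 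This check is precisely the contracting property of $\G$ on its nucleus, which is the one idea your write-up is implicitly using but has not verified. Once you state and prove that lemma, your induction closes and the remaining $\sigma$-leaves are removed by your own splitting trick.
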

\begin{proof}Recall that $\G$ is a contracting self-similar group with nucleus $\{\1,\sigma,b,c,d\}$ (see \cite{Nek}).  It follows that any $g\in\G$ can be written in the form
\[
g \;=\; t^{-1}p_\alpha(k_1\oplus\cdots\oplus k_n)t
\]
where $t\colon C(1) \to C(n)$ is a binary tree, $\alpha\in S_n$, and $k_1,\ldots,k_n\in\{\1,\sigma,b,c,d\}$.  If any of the $k_i$'s are equal to~$\sigma$, we can split the corresponding leaves to obtain the desired form.
\end{proof}

\begin{corollary}
\label{for:product-Grigo-correct-form}
Let $g_1,\ldots,g_m\in\G$.  Then there exists a pair of binary forests $f_1,f_2\colon C(m)\to C(n)$ so that
\[
g_1 \oplus \cdots \oplus g_m \;=\; f_2^{-1}p_\alpha (k_1 \oplus\cdots\oplus k_n)f_1
\]
for some $\alpha\in S_n$ and $k_1,\ldots,k_n \in \{\1,b,c,d\}$.\hfill\qedsymbol
\end{corollary}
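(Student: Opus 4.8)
The plan is to apply Proposition~\ref{prop:GrigoForm} to each $g_i$ individually and then assemble the pieces using the multiplicativity of the direct sum. Concretely, for each $i\in\{1,\ldots,m\}$, Proposition~\ref{prop:GrigoForm} produces binary trees $t_{1,i},t_{2,i}\colon C(1)\to C(n_i)$, a permutation $\alpha_i\in S_{n_i}$, and elements $k_{i,1},\ldots,k_{i,n_i}\in\{\1,b,c,d\}$ with $g_i = t_{2,i}^{-1}\,p_{\alpha_i}(k_{i,1}\oplus\cdots\oplus k_{i,n_i})\,t_{1,i}$. Note that the two trees $t_{1,i}$ and $t_{2,i}$ produced here share a common number of leaves $n_i$; this is what will make the bookkeeping below go through.

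Next I would invoke the functoriality of $\oplus$, which is immediate from its definition: whenever $a_i,b_i$ are composable homeomorphisms for each $i$, one has $\bigoplus_i(a_ib_i) = \bigl(\bigoplus_i a_i\bigr)\bigl(\bigoplus_i b_i\bigr)$, and $\bigl(\bigoplus_i a_i\bigr)^{-1} = \bigoplus_i a_i^{-1}$. Applying this to the four-fold factorizations of the $g_i$ gives
\[
g_1\oplus\cdots\oplus g_m \;=\; \Bigl(\bigoplus_{i=1}^m t_{2,i}\Bigr)^{-1}\Bigl(\bigoplus_{i=1}^m p_{\alpha_i}\Bigr)\Bigl(\bigoplus_{i=1}^m (k_{i,1}\oplus\cdots\oplus k_{i,n_i})\Bigr)\Bigl(\bigoplus_{i=1}^m t_{1,i}\Bigr).
\]
Setting $n = n_1+\cdots+n_m$, the maps $f_1 := \bigoplus_i t_{1,i}$ and $f_2 := \bigoplus_i t_{2,i}$ are binary forests $C(m)\to C(n)$, since a direct sum of binary trees is a composition of split homeomorphisms, and both have exactly $n$ leaves because $t_{1,i}$ and $t_{2,i}$ each have $n_i$ leaves. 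The innermost factor is a direct sum of $n$ elements of $\{\1,b,c,d\}$, which after relabeling we write as $k_1\oplus\cdots\oplus k_n$. Finally, a direct sum of permutation homeomorphisms is again a permutation homeomorphism: $\bigoplus_i p_{\alpha_i}$ permutes the Cantor sets of $C(n)$ by the ``block'' permutation $\alpha\in S_n$ that acts as $\alpha_i$ on the $i$th block of $n_i$ coordinates, so $\bigoplus_i p_{\alpha_i} = p_\alpha$. Substituting, we obtain $g_1\oplus\cdots\oplus g_m = f_2^{-1}p_\alpha(k_1\oplus\cdots\oplus k_n)f_1$, as desired.

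I do not expect a genuine obstacle here: the corollary is essentially a formal consequence of Proposition~\ref{prop:GrigoForm} together with the elementary facts that $\oplus$ is multiplicative on composable homeomorphisms and sends direct sums of permutation (resp.\ split) homeomorphisms to permutation (resp.\ split) homeomorphisms. The only point requiring a moment's care is the leaf-count bookkeeping — confirming that the two assembled forests $f_1$ and $f_2$ have the same range $C(n)$ — and this is guaranteed by the fact that Proposition~\ref{prop:GrigoForm} yields, for each $i$, trees $t_{1,i}$ and $t_{2,i}$ with a common leaf count.
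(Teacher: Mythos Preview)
Your proof is correct and is precisely the intended argument: the paper states the corollary without proof (just a \qedsymbol), treating it as the obvious direct-sum assembly of Proposition~\ref{prop:GrigoForm}. Your bookkeeping on leaf counts, forests, and block permutations is exactly what one checks to make this rigorous.
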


\subsection{R\"{o}ver's group $\VG$ and the groupoid $\V\G$}
\newword{R\"{o}ver's group} $\VG$ is the group of homeomorphisms of $C(1)$ generated by the elements of $V$ and the elements of~$\G$.  More generally, \newword{R\"{o}ver's groupoid} $\V\G$ is the groupoid generated by the elements of $\V$ and the elements of~$\G$.  Roughly speaking, $\V\G$~is the groupoid consisting of all homeomorphisms $C(m)\to C(n)$ that locally look like elements of~$\VG$.

\begin{proposition}\label{prop:RoverForm}Every element of R\"{o}ver's groupoid $\V\G$ has the form
\[
f_2^{-1} p_\alpha (k_1 \oplus \cdots \oplus k_n) f_1
\]
where $f_1$ and $f_2$ are binary forests, $\alpha\in S_n$, and $k_1,\ldots,k_n\in\{\1,b,c,d\}$.

In particular, every element of R\"{o}ver's group $\VG$ has the form
\[
t_2^{-1} p_\alpha (k_1 \oplus \cdots \oplus k_n) t_1
\]
where $t_1,t_2\colon C(1)\to C(n)$ are binary trees, $\alpha\in S_n$, and $k_1,\ldots,k_n\in\{\1,b,c,d\}$.
\end{proposition}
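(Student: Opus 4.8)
The plan is to work with the a priori larger set $\mathcal{S}$ consisting of all homeomorphisms of the form $f_2^{-1}p_\alpha(g_1\oplus\cdots\oplus g_n)f_1$, where $f_1,f_2$ are binary forests, $\alpha\in S_n$, and now $g_1,\dots,g_n\in\G$ (rather than just in $\{\1,b,c,d\}$). I would first show that $\mathcal{S}$ is a sub-groupoid of the groupoid of all homeomorphisms between the objects $C(n)$ which contains both $\V$ and $\G$, so that $\V\G\subseteq\mathcal{S}$, and then show that every element of $\mathcal{S}$ can in fact be brought into the restrictive form of the statement. The inclusion $\V\subseteq\mathcal{S}$ is immediate from Proposition~\ref{thm:groupoid-V-correct-form} (take all $g_i=\1$), and $\G\subseteq\mathcal{S}$ is immediate (take $n=1$ with both forests trivial). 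Since the identity maps $\mathrm{id}_{C(n)}$ obviously lie in $\mathcal{S}$ as well, and since $\V\G$ is by definition the groupoid generated by $\V$ and $\G$, it remains only to verify that $\mathcal{S}$ is closed under inverses and composition.

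Closure under inverses is a one-line computation: inverting $f_2^{-1}p_\alpha(g_1\oplus\cdots\oplus g_n)f_1$ yields $f_1^{-1}(g_1^{-1}\oplus\cdots\oplus g_n^{-1})p_{\alpha^{-1}}f_2$, and moving the permutation leftward through the direct sum via the conjugation identity $(g_1^{-1}\oplus\cdots\oplus g_n^{-1})p_{\alpha^{-1}}=p_{\alpha^{-1}}(g_{\alpha^{-1}(1)}^{-1}\oplus\cdots\oplus g_{\alpha^{-1}(n)}^{-1})$ puts this back into the form of an element of $\mathcal{S}$, since each $g_i^{-1}$ again lies in $\G$.

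The heart of the matter is closure under composition. Given composable elements $u=f_2^{-1}p_\alpha(g_1\oplus\cdots\oplus g_n)f_1$ and $v=h_2^{-1}p_\beta(g_1'\oplus\cdots\oplus g_m')h_1$ of $\mathcal{S}$, the product $vu$ contains the ``collision'' $h_1f_2^{-1}$ of two binary forests in its interior. Since a binary forest is a composition of split homeomorphisms, $h_1f_2^{-1}$ lies in $\V$, so Proposition~\ref{thm:groupoid-V-correct-form} lets me rewrite it as $e_2^{-1}p_\lambda e_1$ with $e_1,e_2$ binary forests. After this substitution I would push $e_1$ to the right past $p_\alpha$ (Proposition~\ref{prop:PermutationSplit}) and then past $(g_1\oplus\cdots\oplus g_n)$ (Proposition~\ref{prop:GrigoSplit}) until it is absorbed into $f_1$ as a single binary forest, producing along the way two extra permutations and a fresh direct sum of elements of $\G$; symmetrically I would push $e_2^{-1}$ to the left past $(g_1'\oplus\cdots\oplus g_m')$ (the inverse form of Proposition~\ref{prop:GrigoSplit}) and past $p_\beta$ (Proposition~\ref{prop:PermutationSplit}) until it is absorbed into $h_2^{-1}$. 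What then remains between the two outer forests is a product of the shape permutation $\cdot$ direct sum $\cdot$ permutation $\cdot$ direct sum, all direct sums being direct sums of elements of $\G$; sliding the permutations through the direct sums by the conjugation identity and multiplying the two surviving direct sums componentwise collapses this to a single permutation followed by a single direct sum of elements of $\G$. Hence $vu\in\mathcal{S}$.

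Finally, to bring a general element $f_2^{-1}p_\alpha(g_1\oplus\cdots\oplus g_n)f_1$ of $\mathcal{S}$ into the restrictive form, apply Corollary~\ref{for:product-Grigo-correct-form} to rewrite the middle factor as $\phi_2^{-1}p_\nu(k_1\oplus\cdots\oplus k_N)\phi_1$ with all $k_i\in\{\1,b,c,d\}$, then commute $p_\alpha$ past $\phi_2^{-1}$ using Proposition~\ref{prop:PermutationSplit} and absorb $\phi_1,\phi_2$ into the outer forests and the leftover permutation into the outer permutation. This proves the first assertion, and the second follows at once: an element of $\VG$ lies in $\V\G$ and maps $C(1)$ to $C(1)$, so in the form just obtained the binary forests $f_1$ and $f_2$ each have a single root and are therefore binary trees. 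I expect the only genuine difficulty to be organizational — keeping track of the leaf counts and of the indices of the various permutations produced by the commutation moves so that everything aligns in the end — rather than conceptual.
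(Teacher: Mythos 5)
Your proposal is correct and follows essentially the same route as the paper: both arguments reduce to showing that the normal form is preserved under composition, using Proposition~\ref{thm:groupoid-V-correct-form} to resolve the collision of the two inner forests, Propositions~\ref{prop:PermutationSplit} and~\ref{prop:GrigoSplit} to push forests and permutations through the direct sums, and Corollary~\ref{for:product-Grigo-correct-form} to normalize the resulting $\G$-elements to $\{\1,b,c,d\}$. The only difference is organizational --- you carry arbitrary elements of $\G$ in the middle slot and invoke Corollary~\ref{for:product-Grigo-correct-form} once at the end, while the paper applies it inside the composition step --- which changes nothing of substance.
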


\begin{proof}
By Proposition~\ref{thm:groupoid-V-correct-form}, elements of $\V$ have the required form.
Similarly, by Proposition~\ref{prop:GrigoForm}, every element of $\G$ also has the required form. Hence, to complete the proof
we just show that the products of two elements in $\V\G$ in the required form still
has the correct shape. Consider then a product of the form
\[
f_2^{-1}p_\alpha(k_1 \oplus \ldots \oplus k_n) f_1\;	h_2^{-1}
p_\beta(\ell_1 \oplus \ldots \oplus \ell_m) h_1
\]
Since $f_1h_2^{-1}p_\beta\in\V$, by Proposition \ref{thm:groupoid-V-correct-form} there exist binary forests $\widetilde{f}_1$ and $\widetilde{h}_2$ and a permutation $\gamma$ so that $f_1h_2^{-1}p_\beta = \widetilde{h}_2^{-1} p_\gamma \widetilde{f}_1$.  Then the above product can be written
\[
f_2^{-1}p_\alpha(k_1 \oplus \ldots \oplus k_n)
\widetilde{h}_2^{-1} p_\gamma
\widetilde{f}_1 (\ell_1 \oplus \ldots \oplus \ell_m) h_1
\]
Next, by Proposition~\ref{prop:GrigoSplit}, we know that $\widetilde{f}_1(\ell_1\oplus\cdots\oplus \ell_m) = p_\delta (g_1\oplus\cdots\oplus g_r)f_1'$
for some binary forest~$f_1'$, some permutation~$p_\delta$, and some $g_1,\ldots,g_r\in \G$, so the above product can be written
\[
f_2^{-1}p_\alpha(k_1 \oplus \ldots \oplus k_n)
\widetilde{h}_2^{-1} p_\epsilon
(g_1 \oplus \ldots \oplus g_m) h_1'
\]
where $h_1' = f_1'h_1$ and $p_\epsilon= p_\gamma p_\delta$.  Repeating the same step on the left and applying Proposition~\ref{prop:PermutationSplit}, we can rewrite this product as
\[
F_1^{-1}p_\zeta(g_1' \oplus \ldots \oplus g_m') p_\epsilon
(g_1 \oplus \ldots \oplus g_m) h_1'
\]
where $F_1$ is a binary forest, $p_\zeta$~is a permutation, and $g_1',\ldots,g_m'\in \G$.  Moving the $p_\epsilon$ to the left and combining the direct sums gives the form
\[
F_1^{-1}p_\eta(g_1'' \oplus \ldots \oplus g_m'') h_1'
\]
where $p_\eta = p_\zeta p_\epsilon$ and $g_i'' = g_{\gamma(i)}'g_i$.  This almost has the correct form---the only trouble is that $g_1'',\ldots,g_m''$ are arbitrary elements of~$\G$.  However, by Proposition~\ref{for:product-Grigo-correct-form}, we know that
\[
(g_1'' \oplus \ldots \oplus g_m'') \;=\; F_2^{-1}p_\theta(k_1' \oplus\cdots\oplus k_r')F_3
\]
for some permutation $\theta$ and some binary forests $F_2$ and $F_3$.  Then the original product can be written
\[
F_1^{-1}p_\epsilon F_2^{-1} p_\theta(k_1' \oplus\cdots\oplus k_r')F_3 h_1'
\]
Moving the $F_2^{-1}$ to the left and combining like terms
gives an expression in the desired form.
\end{proof}

\bigskip
\section{The Poset of Expansions}

In this section we define a poset $\P$ on which R\"{o}ver's group $\VG$ acts, and we show that the resulting geometric realization $|\P|$ is contractible.

For each $n$ and each $i\in\{1,\ldots,n\}$, let $\sigma_i^{(n)}$, $b_i^{(n)}$, $c_i^{(n)}$, and $d_i^{(n)}$ denote the homeomorphisms that act as $\sigma$, $b$, $c$, or $d$, respectively, on the $i$th Cantor set, and act as the identity elsewhere.  As with the split homeomorphism $x_i^{(n)}$, we will usually drop the parenthesized superscripts for these maps (writing only $\sigma_i$, $b_i$, $c_i$, or~$d_i$), in which case the domain must be determined from context.

Recall that $\G$ has a subgroup $K = \{\1,b,c,d\}$ isomorphic to the Klein four-group.  For each $n$, let $K_n$ denote the natural copy of the wreath product $K \wr S_n$ acting on~$C(n)$.  That is, let
\[
K_n \;=\; \bigl\{ p_\alpha(k_1\oplus\cdots\oplus k_n) \;\bigl|\; \alpha\in S_n\text{ and }k_1,\ldots,k_n\in K\bigr\}.
\]
If $f\colon C(1) \to C(n)$ is an element of R\"{o}ver's groupoid, let
\[
[f] \;=\; K_n f \;=\; \{kf \mid k\in K_n\},
\]
and let $\P$ be the set of all such cosets.  We shall refer to elements of $\P$ as \newword{vertices}, with the \newword{rank} of a vertex $[f]$ being the number of Cantor sets in the range of~$f$.

\begin{definition}Let $v,w\in\P$.  We say that $w$ is a \newword{splitting} of $v$ if there exists a homeomorphism $f\colon C(1)\to C(n)$ in~$\V\G$ and an $i\in\{1,\ldots,n\}$ so that
\[
v \;=\; [f]
\qquad\text{and}\qquad
w \;=\; [x_if].
\]
We say that $w$ is an \newword{expansion} of $v$, denoted $v \leq w$, if there exists a sequence of vertices $u_1,\ldots,u_m\in\P$ such that $u_1=v$, $u_m = w$, and each $u_{i+1}$ is a splitting of~$u_i$.
\end{definition}

Note that $\P$ forms a ranked poset under the expansion relation $\leq$.

\begin{proposition}\label{prop:DirectedSet}The poset $\P$ is a directed set.  That is, any two vertices in $\P$ have a common expansion.
\end{proposition}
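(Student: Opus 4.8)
The plan is to reduce everything to the normal form of Proposition~\ref{prop:RoverForm}. Let $v=[f]$ and $w=[g]$ be vertices, where $f\colon C(1)\to C(m)$ and $g\colon C(1)\to C(n)$ are elements of $\V\G$. The first, preliminary step is to record that for any binary forest $F\colon C(m)\to C(r)$ one has $[f]\leq[Ff]$: writing $F$ as a composition of split homeomorphisms $x_{i_{r-1}}\cdots x_{i_m}$ produces the chain $[f],\,[x_{i_m}f],\,[x_{i_{m+1}}x_{i_m}f],\,\ldots,\,[Ff]$, in which every term is of the form $[F'f]$ for a partial forest $F'$ and hence lies in $\P$ (since $F'\in\V\subseteq\V\G$ and $\V\G$ is closed under composition), and each term is a splitting of the preceding one directly from the definition. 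So applying an arbitrary binary forest on the left of a chosen representative yields an expansion.

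The heart of the argument is then to produce binary forests $F\colon C(m)\to C(r)$ and $G\colon C(n)\to C(r)$ with $[Ff]=[Gg]$. For this I would apply Proposition~\ref{prop:RoverForm} to the element $fg^{-1}\in\V\G$, which maps $C(n)$ to $C(m)$, and write $fg^{-1}=f_2^{-1}p_\alpha(k_1\oplus\cdots\oplus k_r)f_1$ with $f_1\colon C(n)\to C(r)$ and $f_2\colon C(m)\to C(r)$ binary forests, $\alpha\in S_r$, and $k_1,\ldots,k_r\in K$. Rearranging gives $f_2 f=p_\alpha(k_1\oplus\cdots\oplus k_r)\,f_1 g$. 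Since $p_\alpha(k_1\oplus\cdots\oplus k_r)$ is precisely an element of $K_r$, left multiplication by it does not change the coset, so $[f_2 f]=K_r\,p_\alpha(k_1\oplus\cdots\oplus k_r)\,f_1 g=K_r\,f_1 g=[f_1 g]$. Combining with the preliminary step, $[f]\leq[f_2 f]=[f_1 g]\geq[g]$, so $[f_2 f]$ is the desired common expansion.

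The routine bookkeeping to check is that $f_2 f$ (and each intermediate composite) indeed lies in $\V\G$, so that the cosets in question are genuine vertices of $\P$ — immediate from closure of $\V\G$ under composition — and that $K_r$ really does consist of all elements $p_\alpha(k_1\oplus\cdots\oplus k_r)$, which is just its definition. I do not expect a genuine obstacle: the only real idea is the choice to pass to the composite $fg^{-1}$ and invoke Proposition~\ref{prop:RoverForm}. If there is a mildly delicate point, it is unwinding the definition of an expansion (a chain of splittings of cosets) carefully enough to be sure that postcomposing a representative by an arbitrary binary forest genuinely realizes an expansion in the sense of the definition — but this is exactly what the first step above establishes.
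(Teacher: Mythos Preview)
Your proof is correct. The key input---Proposition~\ref{prop:RoverForm}---is the same one the paper uses, but the execution differs slightly. The paper applies the normal form to a \emph{single} vertex $[g]$ to show that $[g]$ has an expansion of the form $[t]$ with $t$ a binary tree, and then finishes by observing that any two binary trees have a common expansion (an appeal to the directedness of the tree poset). You instead apply the normal form to the composite $fg^{-1}$ and read off the common expansion $[f_2f]=[f_1g]$ directly, with no intermediate reduction to trees. Your route is arguably cleaner in that it avoids the ``clearly any two binary trees have a common expansion'' step; the paper's route, on the other hand, isolates the useful auxiliary fact that every vertex admits a binary-tree expansion.
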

\begin{proof}Let $[g]$ be a vertex in $\P$.  By Proposition~\ref{prop:RoverForm}, we know that
\[
g \;=\; f^{-1} p_\alpha (k_1\oplus\cdots\oplus k_n) t
\]
for some binary forest $f$, some binary tree $t$, some permutation $\alpha\in S_n$, and some elements $k_1,\ldots,k_n\in K$.  Since $f$ is a composition of split homeomorphisms, the vertex $[fg]$ is an expansion of $[g]$.  But
\[
[fg] \;=\; [p_\alpha (k_1\oplus\cdots\oplus k_n) t] \;=\; [t]
\]
since $p_\alpha(k_1\oplus\cdots\oplus k_n)\in K$.  Thus every vertex in $\P$ has an expansion which is just a binary tree.  But clearly any two binary trees have a common expansion.
\end{proof}

Let $|\P|$ denote the geometric realization of the poset $\P$, i.e.~the simplicial complex whose vertices are elements of~$\P$, with simplices corresponding to finite chains $v_1 < \cdots < v_k$.

\begin{corollary}\label{cor:ContractibleP}The geometric realization $|\P|$ of $\P$ is contractible.
\end{corollary}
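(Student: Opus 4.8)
The plan is to deduce contractibility of $|\P|$ directly from Proposition~\ref{prop:DirectedSet} using a standard fact about directed posets. Recall that if a poset $\P$ is directed, then its geometric realization $|\P|$ is contractible. Indeed, there are several routes to this: one can invoke the fact that a directed poset is filtered and hence its nerve is contractible, or one can give an explicit argument as follows.

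First I would recall the elementary lemma that the geometric realization of a poset with a maximum element is contractible (it is a cone on the subcomplex spanned by the remaining vertices, with apex the maximum). More generally, if $\P$ is directed, then $|\P|$ is the directed union (colimit) of the subcomplexes $|\P_{\leq v}|$ as $v$ ranges over $\P$, where $\P_{\leq w} = \{u \in \P \mid u \leq w\}$; since any finite subcomplex of $|\P|$ involves only finitely many vertices $v_1,\ldots,v_k$, and these have a common expansion $w$ by Proposition~\ref{prop:DirectedSet}, that subcomplex is contained in $|\P_{\leq w}|$. Each $|\P_{\leq w}|$ has the maximum element $w$ and is therefore contractible, in fact contractible via a contraction that is canonical (coning off to $w$). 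The only subtlety is that these contractions are not literally compatible for different $w$, so one cannot simply paste them; instead one argues that every map of a sphere $S^k \to |\P|$ has compact image, hence factors through some $|\P_{\leq w}|$, and is therefore nullhomotopic there. This shows $\pi_k(|\P|) = 0$ for all $k$, and since $|\P|$ is a CW complex, Whitehead's theorem gives that $|\P|$ is contractible.

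I expect the main obstacle to be purely expository rather than mathematical: making precise the passage from "every compact subset lies in some $|\P_{\leq w}|$" to the vanishing of all homotopy groups, being careful that a continuous image of a sphere in a CW complex meets only finitely many cells and hence only finitely many vertices. This is routine once stated correctly, and no genuinely new idea beyond Proposition~\ref{prop:DirectedSet} is needed. An alternative, even shorter route would be to cite a standard reference (e.g.\ Quillen's observation that the nerve of a directed poset is contractible, or the corresponding statement in Brown's or Geoghegan's book on finiteness properties) and simply say that $|\P|$ is contractible because $\P$ is directed.

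\begin{proof}
By Proposition~\ref{prop:DirectedSet}, the poset $\P$ is directed. For each $w \in \P$, let $\P_{\leq w} = \{u \in \P \mid u \leq w\}$; its geometric realization $|\P_{\leq w}|$ is contractible, since $\P_{\leq w}$ has a maximum element $w$, so $|\P_{\leq w}|$ is a cone with apex~$w$. Now let $k \geq 0$ and let $\phi\colon S^k \to |\P|$ be a continuous map. The image $\phi(S^k)$ is compact, hence meets only finitely many open cells of $|\P|$ and so is contained in the full subcomplex spanned by a finite set of vertices $v_1,\ldots,v_m \in \P$. Since $\P$ is directed, the $v_i$ have a common expansion $w \in \P$, and then $v_i \leq w$ for each~$i$, so $\phi$ factors through $|\P_{\leq w}| \subseteq |\P|$. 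As $|\P_{\leq w}|$ is contractible, $\phi$ is nullhomotopic in $|\P_{\leq w}|$, hence in $|\P|$. Therefore $\pi_k(|\P|) = 0$ for all $k \geq 0$, and since $|\P|$ is a CW complex, Whitehead's theorem implies that $|\P|$ is contractible.
\end{proof}
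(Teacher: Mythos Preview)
Your proof is correct and takes essentially the same approach as the paper: both deduce contractibility directly from Proposition~\ref{prop:DirectedSet} via the standard fact that a directed poset has contractible geometric realization. The paper simply cites \cite[Prop.~9.3.14]{Geoghegan} for this, while you spell out the usual compactness-plus-Whitehead argument; the only (trivial) omission is that you tacitly pass from ``any two vertices have a common expansion'' to ``any finite set does,'' which follows by an obvious induction.
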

\begin{proof}It is well known that the geometric realization of any directed set is contractible. See \cite[Prop.~9.3.14]{Geoghegan} for a proof.
\end{proof}

Note that R\"{o}ver's group $\VG$ acts on the vertex set $\P$ on the right by pre-composition, i.e.~$[f]g=[fg]$ for all $f\colon C(1)\to C(n)$ in~$\V\G$ and $g\in\VG$.  It follows that $\VG$ acts simplicially on~$|\P|$.

\begin{proposition}\label{prop:FiniteStabilizers}Under the action of $\VG$, each vertex in $|\P|$ has finite stabilizer.
\end{proposition}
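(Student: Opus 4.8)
The plan is to identify the stabilizer of a vertex with the intersection of $\VG$ with a finite group, so that finiteness is immediate. A vertex of $|\P|$ is by definition a vertex of $\P$, i.e.\ a coset $v = [f] = K_n f$ for some homeomorphism $f\colon C(1)\to C(n)$ in $\V\G$, where $n = \rank v$. First I would compute the stabilizer explicitly. Since $\VG$ acts on the right by precomposition, we have $vg = [fg]$ for every $g\in\VG$, and $[fg] = [f]$ holds precisely when $fg \in K_n f$, i.e.\ when $fgf^{-1}\in K_n$, i.e.\ when $g\in f^{-1}K_n f$. Thus
\[
\mathrm{Stab}_{\VG}(v) \;=\; \VG \cap f^{-1} K_n f .
\]
(This does not depend on the chosen representative $f$: replacing $f$ by $kf$ with $k\in K_n$ replaces $f^{-1}K_n f$ by $f^{-1}k^{-1}K_n k f = f^{-1}K_n f$, since $K_n$ is a group.)

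Next I would observe that $K_n$ is the finite group $K\wr S_n$, of order $4^n\, n!$, and that conjugation by $f$ is an injective map from $K_n$ onto the set $f^{-1}K_n f$ of self-homeomorphisms of $C(1)$, injectivity being immediate because $f$, as an element of the groupoid $\V\G$, is invertible. Hence $f^{-1}K_n f$ has exactly $4^n\, n!$ elements. Since $\mathrm{Stab}_{\VG}(v)$ is contained in $f^{-1}K_n f$, it is finite; in fact it is a subgroup of the finite group $f^{-1} K_n f \cong K\wr S_n$, so its order divides $4^n\, n!$ where $n = \rank v$. This completes the argument.

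There is essentially no real obstacle here; the only point needing a moment's care is the bookkeeping that $[fg] = [f]$ is equivalent to $g \in f^{-1}K_n f$, which is immediate once one recalls that $[f]$ denotes the left coset $K_n f$ of the group $K_n$ and that $\VG$ acts by precomposition. I would add the remark that this proposition, together with cocompactness of the $\VG$-action on the relevant skeleta (to be established once the Stein complex is analyzed), is exactly what will later let the finiteness properties of $\VG$ be read off from the connectivity of the descending links.
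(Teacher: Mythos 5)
Your proof is correct and follows essentially the same route as the paper: both identify the stabilizer of $[f]$ with (a subgroup of) $f^{-1}K_nf$, which is finite of order $n!\cdot 4^n$. The only cosmetic difference is that you write the stabilizer as $\VG\cap f^{-1}K_nf$, whereas the paper asserts it equals $f^{-1}K_nf$ outright (which holds because $K_n\subseteq\V\G$, so $f^{-1}K_nf$ already lies in $\VG$); either way the finiteness is immediate.
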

\begin{proof}If $[f]$ is any vertex of rank~$n$, then the stabilizer of $[f]$ is precisely the group $f^{-1}K_n f$. This is isomorphic to the wreath product $K\wr S_n$, which is finite of order~$n!\cdot 4^n$.
\end{proof}

Unfortunately, the complex $|\P|$ is too large for us to successfully apply Brown's criterion.  As with other Thompson-like groups, it will be necessary to consider a certain subcomplex of $|\P|$, which we will define in the next section.

\bigskip
\section{The Stein Complex}

In this section we define a locally finite $\VG$-invariant subcomplex $\K$ of $|\P|$, and we prove that $\K$ is contractible.  The complex $\K$ is the analog of the complexes for $F$, $T$, and $V$ introduced by Stein in~\cite{Stein}. Similar complexes were introduced in \cite{Bux} and \cite{Fluch} for the braided Thompson groups $BV$ and the higher-dimensional Thompson groups~$sV$, respectively.

Before defining~$\K$, we need some more information about splittings. Although we have defined splittings using the split homeomorphisms~$x_i^{(n)}$, the form of a splitting may depend on a chosen representative~$f$.
For example, if $[f]$ is a vertex of rank $n$, then $b_i f$ is also a representative for~$f$.  But
\[
x_i^{(n)} b_i^{(n)} \;=\; \sigma_{i}^{(n+1)} c_{i+1}^{(n+1)} x_i^{(n)}
\]
so
\[
[x_i b_i f] \;=\; [\sigma_i c_{i+1} x_i f] \;=\; [\sigma_i x_i f]
\]
where the last equality follows from the fact that $c_{i+1} \in K_{n+1}$.  We conclude that $[\sigma_i x_i f]$ is a splitting of~$[f]$.

The following proposition shows that these are the only ``unusual'' splittings.

\begin{proposition}\label{prop:ClassifySplittings}
Let $[f]$ be a vertex in $\P$ of rank $n$.  Then every splitting of $[f]$ has the form
\[
[x_if]\qquad\text{or}\qquad [\sigma_i x_i f]
\]
for some $i\in\{1,\ldots,n\}$
\end{proposition}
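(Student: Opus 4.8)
The plan is to start from an arbitrary splitting $w$ of $[f]$, use the definition of splitting to write $w = [x_i g]$ where $g$ is some representative of $[f]$ (so $g = kf$ for some $k \in K_n$), and then push the correction term $k$ through the split homeomorphism $x_i$. Concretely, since $k = p_\alpha(k_1 \oplus \cdots \oplus k_n)$ with each $k_j \in K$, we need to understand $x_i^{(n)} p_\alpha(k_1\oplus\cdots\oplus k_n)$. First I would handle the permutation part using Proposition~\ref{prop:PermutationSplit} (or a direct computation): conjugating past the split, $x_i^{(n)} p_\alpha = p_{\alpha'} x_{\alpha^{-1}(i)}^{(n)}$ for an appropriate $\alpha' \in S_{n+1}$, which reduces the problem to the case $k = k_1 \oplus \cdots \oplus k_n$ with no permutation.

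In that reduced case, write $j = \alpha^{-1}(i)$ and note $x_j^{(n)}(k_1\oplus\cdots\oplus k_n) = (k_1 \oplus\cdots\oplus k_{j-1}) \oplus \bigl(x\, k_j\bigr) \oplus (k_{j+1}\oplus\cdots\oplus k_n)$ using the compatibility of $\oplus$ with composition. So everything except the $j$th slot is already in $K_{n+1}$ (those are direct summands lying in $K$ acting on the other Cantor sets), and the only issue is the single factor $x k_j$ where $k_j \in \{\1,b,c,d\}$. The key computation is therefore: for each $k \in \{\1,b,c,d\}$, the homeomorphism $x k \colon C(1)\to C(2)$ equals $m \cdot x$ for some $m$ in the natural copy of $K \wr S_2$ acting on $C(2)$, times possibly the transposition $p_{(1\,2)}$. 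This is exactly the kind of identity illustrated in the text right before the statement: $x \1 = x$, $x b = (\sigma\oplus c)x$, $x c = (\sigma\oplus d)x$, $x d = (\1\oplus b)x$ — these follow directly from the recursive definitions $b = x^{-1}(\sigma\oplus c)x$, etc. Notice that in three of the four cases a $\sigma$ appears in the first slot, which is \emph{not} in $K$; this is the source of the $[\sigma_i x_i f]$ alternative. So after absorbing the $c$, $d$, or $b$ summand (which \emph{is} in $K$) into the $K_{n+1}$-coset, we are left with either $x_i f$ (when $k_j = \1$, or when $k_j = d$ and the $\sigma$-free slot is used) or $\sigma_i x_i f$ — wait, more carefully: $xb$ and $xc$ give a leading $\sigma$, $xd$ gives no $\sigma$. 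So $w = [x_i f]$ or $[\sigma_i x_i f]$ depending on $k_j$, and reinstating the permutation $\alpha$ (which I dealt with first) only relabels the index $i$, so the conclusion holds with the originally given $i$.

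I expect the main obstacle to be bookkeeping rather than a genuine conceptual difficulty: keeping track of how the permutation $\alpha$ interacts with the index of the split homeomorphism, and verifying that the stray $c$, $d$, or $b$ generated on the $(i{+}1)$st leaf (as in the displayed identity $x_i b_i = \sigma_i c_{i+1} x_i$) really does land in $K_{n+1}$ so it can be discarded from the coset. One should also double-check that no case produces something of the form $[\sigma_i' \sigma_i x_i f]$ with an unexpected second factor; since $\sigma$ has order two and the only $\sigma$ produced sits in the first of the two new leaves, this does not happen, but it is worth stating explicitly. Apart from that, the proof is a finite case check over $k_j \in \{\1,b,c,d\}$ using the defining recursions for $b$, $c$, $d$.
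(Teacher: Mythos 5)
Your proposal is correct and follows essentially the same route as the paper's proof: write the alternate representative as $p_\alpha(k_1\oplus\cdots\oplus k_n)f$, commute the split past the permutation via $x_i p_\alpha = p_\beta x_j$ with $j=\alpha^{-1}(i)$, and then do the finite case check $x\1=x$, $xb=(\sigma\oplus c)x$, $xc=(\sigma\oplus d)x$, $xd=(\1\oplus b)x$, absorbing the resulting $b$, $c$, $d$ factors into $K_{n+1}$. The only cosmetic slip is the closing claim that the conclusion holds "with the originally given $i$" --- it holds with index $j=\alpha^{-1}(i)$, which is all the statement requires since it only asserts existence of some index.
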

\begin{proof}Let $g$ be any other representative for $[f]$.  Then $g\in K_n f$, so
\[
g \;=\; p_\alpha (k_1\oplus\cdots\oplus k_n) f
\]
for some $\alpha\in S_n$ and $k_1,\ldots,k_n \in K$.  If $i\in\{1,\ldots,n\}$, then
\[
x_ig \;=\; x_i p_\alpha (k_1\oplus\cdots\oplus k_n) f.
\]
But $x_i p_\alpha = p_\beta x_j$ for some $\beta\in S_{n+1}$, where $j = \alpha^{-1}(i)$.  So
\[
[x_ig] \;=\; [p_\beta x_j (k_1\oplus\cdots\oplus k_n) f] \;=\; [x_j (k_1\oplus \cdots \oplus k_n)f].
\]
Since $k_j \in \{\1,b,c,d\}$, we conclude that $[x_ig]$ is either $[x_j f]$, $[x_j b_j f]$, $[x_j c_j f]$, or $[x_j d_j f]$.  We already know that $[x_j b_j f] = [\sigma_j x_j f]$, and similarly
\[
[x_j c_j f] \;=\; [\sigma_j d_{j+1} x_j f] \;=\; [\sigma_j x_j f]
\qquad\text{and}\qquad
[x_j d_j f] \;=\; [b_{j+1} x_j f] \;=\; [x_j f].\qedhere
\]
\end{proof}

Thus, there are exactly two ways to split the $i$th Cantor set of~$f$: we can compose with either $x_i$ or $\sigma_i x_i$.  By Proposition~\ref{prop:DirectedSet}, these two splittings $[x_if]$ and $[\sigma_i x_i f]$ must have a common expansion.  Indeed, $[x_ix_if]$ is an expansion of them both, since
\[
[x_i\sigma_i x_i f] \;=\; [p_{(i\;i+1)} x_i x_i f] \;=\; [x_i x_i f].
\]
To clarify the situation further, the following picture shows a portion of $\P$ lying above the vertex $[f]$.
\[
    \xymatrix@R=2ex@C=0em{
    [\sigma_i x_i x_i f]\ar@{-}[dr] & & [x_i x_i f]\ar@{-}[dl]\ar@{-}[dr] & & [\sigma_{i+1} x_i x_i f]\ar@{-}[dl] \\
    & [x_i f]\ar@{-}[dr] & & [\sigma_i x_i f]\ar@{-}[dl] & \\
    & & [f] & &
    }
\]
This prompts the following definition.

\begin{definition}Let $v,w\in\P$.  We say that $w$ is a \newword{double splitting} of $v$ if there exists an $f\colon C(1)\to C(n)$ in $\V\G$ so that
\[
v \;=\; [f]
\qquad\text{and}\qquad
w \;=\; [x_i x_i f]
\]
for some $i\in\{1,\ldots,n\}$.
\end{definition}

As the following proposition shows, double splittings do not have the same ambiguity as single splittings.

\begin{proposition}
\label{thm:double-splitting}
Let $[f]$ be a vertex in $\P$ of rank $n$.  Then every double splitting of $[f]$ has the form $[x_ix_i f]$ for some $i\in\{1,\ldots,n\}$.
\end{proposition}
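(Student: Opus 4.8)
The plan is to take an arbitrary double splitting $w$ of $[f]$, unwind the definition to write $w = [x_ix_ig]$ for a representative $g$ of $[f]$ (not necessarily $f$ itself), and then push every ``extra'' factor occurring in $g$ to the left past the two splits, checking that whatever remains in front is absorbed into $K_{n+2}$. This will force $w$ into the claimed form $[x_jx_jf]$, for the \emph{original} representative $f$.

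More precisely, write $n = \rank[f]$. By the definition of double splitting there is a homeomorphism $g\in\V\G$ with $[g] = [f]$ and $w = [x_ix_ig]$ for some $i\in\{1,\ldots,n\}$; since $g\in K_nf$ we may write $g = p_\alpha(k_1\oplus\cdots\oplus k_n)f$ with $\alpha\in S_n$ and $k_1,\ldots,k_n\in K$. The first step is to move $p_\alpha$ to the left of $x_ix_i$. Applying Proposition~\ref{prop:PermutationSplit} once gives $x_ip_\alpha = p_\beta x_j$ for a single split $x_j$ with $j = \alpha^{-1}(i)$, and applying it again to $x_ip_\beta$ yields $x_ix_ip_\alpha = p_\gamma x_jx_j$; the point is that both splits occur at the \emph{same} index $j$, since the pair of Cantor sets created by the first split sits in positions $j$ and $j+1$ and $\beta$ carries that pair onto positions $i$ and $i+1$, so $\beta^{-1}(i) = j$. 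Hence $w = [p_\gamma x_jx_j(k_1\oplus\cdots\oplus k_n)f] = [x_jx_j(k_1\oplus\cdots\oplus k_n)f]$, with $j\in\{1,\ldots,n\}$.

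The heart of the argument is the computation
\[
x_jx_j(k_1\oplus\cdots\oplus k_n) \;=\; p_\epsilon(\ell_1\oplus\cdots\oplus\ell_{n+2})\,x_jx_j
\]
for some $\epsilon\in S_{n+2}$ and some $\ell_1,\ldots,\ell_{n+2}\in K$. For the summands $k_m$ with $m\neq j$ this is immediate: the $m$th Cantor set is untouched by the two splits, so $k_m$ simply commutes past $x_jx_j$, ending up at position $m$ if $m<j$ and position $m+2$ if $m>j$. For the summand $k_j$ I would argue by cases on $k_j\in\{\1,b,c,d\}$, using the relation $x_ib_i = \sigma_ic_{i+1}x_i$ recorded above --- together with the analogous relations $x_ic_i = \sigma_id_{i+1}x_i$ and $x_id_i = b_{i+1}x_i$, which follow in the same way from the recursive definitions of $c$ and $d$ --- and the relation $x_i\sigma_i = p_{(i\;i+1)}x_i$, an immediate consequence of $\sigma = x^{-1}p_{(1\;2)}x$. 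For example, when $k_j = b$ one computes
\[
x_jx_jb_j \;=\; x_j\sigma_jc_{j+1}x_j \;=\; p_{(j\;j+1)}\,x_jc_{j+1}x_j \;=\; p_{(j\;j+1)}\,c_{j+2}\,x_jx_j ;
\]
the cases $k_j = c$ and $k_j = d$ are analogous while $k_j = \1$ is trivial; in every case the factor left in front of $x_jx_j$ lies in $K_{n+2}$. Combining this with the previous paragraph gives
\[
w \;=\; [x_jx_j(k_1\oplus\cdots\oplus k_n)f] \;=\; [p_\epsilon(\ell_1\oplus\cdots\oplus\ell_{n+2})\,x_jx_jf] \;=\; [x_jx_jf] ,
\]
which is exactly the desired form.

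The four-case computation of $x_jx_jk_j$ is the only delicate point, and also the conceptual content of the statement. The subtlety --- and the reason double splittings are rigid while single splittings are not --- is the transient appearance of $\sigma\notin K$ after a \emph{single} split of a Cantor set carrying $b$ or $c$: this $\sigma$ is precisely what produced the extra single splittings $[\sigma_ix_if]$ in Proposition~\ref{prop:ClassifySplittings}, but a second split trades it for a transposition $p_{(j\;j+1)}$ together with an identity, so it disappears into $K_{n+2}$. Everything else --- the two applications of Proposition~\ref{prop:PermutationSplit} and the bookkeeping of which positions the summands land in --- is routine.
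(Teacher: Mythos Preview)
Your proof is correct and follows essentially the same approach as the paper's: the paper simply says ``Following the proof of Proposition~\ref{prop:ClassifySplittings}'' to cover your permutation-moving step and reduction to the four cases $k_j\in\{\1,b,c,d\}$, and then writes out the same three computations $[x_jx_jb_jf]=[x_jx_jf]$, $[x_jx_jc_jf]=[x_jx_jf]$, $[x_jx_jd_jf]=[x_jx_jf]$ that you carry out. Your version is just more explicit about the bookkeeping (in particular the verification that $\beta^{-1}(i)=j$, so that both splits land at the same index).
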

\begin{proof}Following the proof of Proposition~\ref{prop:ClassifySplittings}, we find that the only possible double splittings of $[f]$ are $[x_ix_if]$, $[x_ix_ib_if]$, $[x_ix_i c_if]$, and $[x_ix_i d_if]$.  But
\[
[x_ix_i b_i f] \;=\; [x_i \sigma_i c_{i+1} x_i f] \;=\; [p_{(i\;i+1)} c_{i+2} x_ix_i f] \;=\; [x_ix_if]
\]
and
\[
[x_ix_i c_i f] \;=\; [x_i \sigma_i d_{i+1} x_i f] \;=\; [p_{(i\;i+1)} d_{i+2} x_ix_i f] \;=\; [x_ix_if]
\]
and
\[
[x_ix_i d_i f] \;=\; [x_i b_{i+1} x_i f] \;=\; [b_{i+2} x_ix_i f] \;=\; [x_ix_if].\qedhere
\]
\end{proof}

We are now ready to define the complex $\K$.

\begin{definition}If $[f]$ is a vertex in $\P$ of rank $n$, an \newword{elementary expansion} of $[f]$ is any vertex of the form
\[
[(u_1 \oplus \cdots \oplus u_n)f]
\]
where each $u_i \in \{\1,x,\sigma_1x,x_1x\}$.
\end{definition}

That is, an elementary expansion of $[f]$ is obtained by splitting or double splitting some of the Cantor sets in the range of~$f$.  Note that this definition does not depend on the chosen representative~$f$.

\begin{definition}A simplex $v_1 < \cdots < v_n$ in $|\P|$ is called an \newword{elementary simplex} if $v_n$ is an elementary expansion of $v_1$.  The \newword{Stein complex} $\K$ for $\VG$ is the subcomplex of $|\P|$ consisting of all elementary simplices.
\end{definition}

We wish to prove that $\K$ is contractible.  To do so, consider the \newword{intervals} in~$\P$, which are subsets of the form
\[
[u,w] \;=\; \{v\in\P \mid u\leq v \leq w\}.
\]
We wish to prove that every nonempty interval $[u,w]$ in $\P$ contains a maximum elementary expansion of~$u$, which we refer to as the \newword{elementary core} of the interval.  That is, $v_0 \in [u,w]$ is the elementary core of $[u,w]$ if $v_0$ is an elementary expansion of~$u$, and $v_0$ is a common expansion of all elementary expansions of $u$ contained in $[u,w]$.

\begin{lemma}\label{lemma:ClassifyingExpansions}Let $v\in\P$ be a vertex of the form\/ $[(g_1 \oplus\cdots\oplus g_n)f]$, where $g_1,\ldots,g_n$ and $f$ are homeomorphisms in~$\V\G$.  Then the expansions of $v$ are precisely the vertices of the form\/ $[(h_1\oplus\cdots\oplus h_n)f]$, where each\/ $[h_i]$ is an expansion of\/ $[g_i]$.
\end{lemma}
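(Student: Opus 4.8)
The plan is to reduce the statement to the case of a single splitting and then bootstrap to arbitrary expansions by induction on the length of an expansion chain, using transitivity of $\leq$. Two elementary facts will be used throughout: direct sums compose blockwise, and replacing an $h_a$ by another representative of $[h_a]$ does not change the vertex $[(h_1\oplus\cdots\oplus h_n)f]$, because an element of $K_{m_a}$ acting in the $a$-th slot again lies in the ambient wreath product (here $m_a$ denotes the rank of $g_a$, and we set $N=m_1+\cdots+m_n$).

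First I would establish the one-step statement: a vertex $w$ is a splitting of $v=[(g_1\oplus\cdots\oplus g_n)f]$ if and only if
\[
w \;=\; [(g_1\oplus\cdots\oplus g_{i-1}\oplus h_i\oplus g_{i+1}\oplus\cdots\oplus g_n)f]
\]
for some $i$ and some splitting $[h_i]$ of $[g_i]$. Both directions rest on the same routine facts: blockwise composition of direct sums; the identity $x_\ell p_\alpha = p_{\alpha'}x_{\ell'}$ with $\ell'=\alpha^{-1}(\ell)$ (already used in the proof of Proposition~\ref{prop:ClassifySplittings}) commuting a split past a permutation; and the observation that every $\mu\in K_N$ can be written $p_\beta(\nu_1\oplus\cdots\oplus\nu_n)$ with each $\nu_a\in K_{m_a}$. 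For the ``only if'' direction, a splitting of $v$ has the form $[x_\ell\,\mu\,(g_1\oplus\cdots\oplus g_n)f]$; rewriting $\mu=p_\beta(\nu_1\oplus\cdots\oplus\nu_n)$, absorbing each $\nu_a$ into $g_a$ blockwise, and commuting $x_\ell$ past $p_\beta$ gives $[x_{\ell'}(\nu_1 g_1\oplus\cdots\oplus\nu_n g_n)f]$; since $x_{\ell'}$ splits a single Cantor set, that Cantor set lies in exactly one block $i$, and with $j$ its position within that block we obtain $w=[(\nu_1 g_1\oplus\cdots\oplus x_j\nu_i g_i\oplus\cdots\oplus\nu_n g_n)f]$, which — absorbing the $\nu_a g_a$ for $a\ne i$ back to $g_a$ — has the stated form with $[h_i]=[x_j\nu_i g_i]$ a splitting of $[\nu_i g_i]=[g_i]$. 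The ``if'' direction is this manipulation run backwards: from $h_i=x_j\kappa g_i$ with $\kappa\in K_{m_i}$, pull the leading $x_j$ out of the $i$-th block to an $x_\ell$ on the whole range and pull $\kappa$ out to an element of $K_N$, exhibiting $w$ as a splitting of $v$.

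Given the one-step statement, the two inclusions of the lemma follow formally. For ``every expansion of $v$ has the stated form'', I would induct on the length $m$ of a chain $v=u_1<\cdots<u_m=w$ of splittings: the case $m=1$ takes $h_a=g_a$, and for $m>1$ the inductive hypothesis writes $u_{m-1}=[(h_1'\oplus\cdots\oplus h_n')f]$ with each $[h_a']$ an expansion of $[g_a]$, while the one-step statement applied to $u_{m-1}<u_m$ modifies a single block $i$, producing an $h_i$ with $[h_i]$ a splitting of $[h_i']$; transitivity then gives $[g_i]\leq[h_i']\leq[h_i]$. For ``every such vertex is an expansion of $v$'', I would expand one block at a time,
\[
[(g_1\oplus\cdots\oplus g_n)f] \;\leq\; [(h_1\oplus g_2\oplus\cdots\oplus g_n)f] \;\leq\; \cdots \;\leq\; [(h_1\oplus\cdots\oplus h_n)f],
\]
where the $i$-th step replays a fixed chain of splittings from $[g_i]$ to $[h_i]$ inside the $i$-th slot, each of which is a splitting of the ambient vertex by the ``if'' part of the one-step statement, and transitivity closes the chain.

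The only step requiring genuine care is the ``only if'' direction of the one-step statement, where a splitting is presented through an arbitrary representative and one must chase the wreath element $\mu\in K_N$ through the direct-sum decomposition — correctly matching the singleton slots of $\mu$ against the $n$ blocks $g_1,\ldots,g_n$, and tracking how the split index migrates when $x_\ell$ is commuted past $p_\beta$. Nothing here is deep, but an index slip would break the argument, so I would write this computation out in full; everything downstream is formal induction and transitivity of the expansion order.
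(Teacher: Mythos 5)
Your proposal is correct and follows essentially the same route as the paper: the paper's (one-sentence) proof likewise reduces to the observation that each single splitting of $[(g_1\oplus\cdots\oplus g_n)f]$ acts on exactly one block, invoking the computation behind Proposition~\ref{prop:ClassifySplittings} to handle the change of representative. You simply write out in full the coset-chasing and the induction on chain length that the paper leaves implicit, and you also make explicit the converse inclusion (expanding one block at a time), which the paper's proof glosses over.
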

\begin{proof}Note first that $v$ itself has the required form, with $h_i = g_i$ for each~$i$.  By Proposition~\ref{prop:ClassifySplittings}, each subsequent splitting is just a composition by $x_i$ or $\sigma_ix_i$, and is therefore equivalent to a splitting of one of the $h_j$'s.
\end{proof}

\begin{lemma}\label{lemma:ElemCoreIdentity}Let $\1$ denote the identity map on $C(1)$, let $u = [\1]$, and let $w$ be an expansion of~$u$. Then the interval $[u,w]$ has an elementary core.
\end{lemma}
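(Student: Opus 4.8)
The plan is to describe explicitly the very short list of elementary expansions of $u = [\1]$, to reduce the existence of the elementary core to a single statement about the expansion order, and then to establish that statement with the help of Lemma~\ref{lemma:ClassifyingExpansions}. First I would record that, because $[\1]$ has rank $1$, the definition of elementary expansion leaves only the four vertices
\[
[\1],\qquad [x],\qquad [\sigma_1 x],\qquad [x_1 x],
\]
corresponding to $u_1\in\{\1,x,\sigma_1 x,x_1 x\}$. These are pairwise distinct, since $[x]=[\sigma_1 x]$ would force $\sigma\in K$. Their order relations are $[\1]<[x]$, $[\1]<[\sigma_1 x]$, $[x]<[x_1 x]$, and $[\sigma_1 x]<[x_1 x]$; the last holds because $x_1\sigma_1 = p_{(1\;2)}x_1$ gives $[x_1\sigma_1 x] = [x_1 x]$, exhibiting $[x_1 x]$ as a splitting of $[\sigma_1 x]$ as well as of $[x]$, while $[x]$ and $[\sigma_1 x]$ are incomparable because they have equal rank. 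Thus the poset of elementary expansions of $[\1]$ is a diamond with bottom $[\1]$, top $[x_1 x]$, and incomparable middle elements $[x]$ and $[\sigma_1 x]$.

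Next I would reduce the lemma to a single implication. Let $S = \{v\in\{[\1],[x],[\sigma_1 x],[x_1 x]\}\mid v\le w\}$; this is precisely the set of elementary expansions of $[\1]$ lying in $[u,w]$, and $[\1]\in S$ because $w$ is an expansion of $[\1]$. By definition the interval $[u,w]$ has an elementary core exactly when $S$ has a greatest element, which is then the core. Using the diamond: if $[x_1 x]\le w$ then $[x_1 x]=\max S$; if exactly one of $[x],[\sigma_1 x]$ lies below $w$, then it is $\max S$; and if neither does, then $[\1]=\max S$. The one configuration with no greatest element is $[x]\le w$ and $[\sigma_1 x]\le w$ but $[x_1 x]\not\le w$, in which case $S=\{[\1],[x],[\sigma_1 x]\}$. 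Hence it suffices to prove that $[x]\le w$ and $[\sigma_1 x]\le w$ together imply $[x_1 x]\le w$.

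To prove this, apply Lemma~\ref{lemma:ClassifyingExpansions} to each lower bound, writing $[x] = [(\1\oplus\1)x]$ and $[\sigma_1 x] = [(\sigma\oplus\1)x]$. The relation $[x]\le w$ yields $w = [(a_1\oplus a_2)x]$ with $[a_1]$ and $[a_2]$ expansions of $[\1]$, and $[\sigma_1 x]\le w$ yields $w = [(b_1\oplus b_2)x]$ with $[b_1]$ an expansion of $[\sigma]$ and $[b_2]$ an expansion of $[\1]$. Cancelling the trailing $x$ from the equality of $K_n$-cosets, one compares $a_1\oplus a_2$ with $b_1\oplus b_2$ up to an element $k$ of $K_n$; since both maps send the first Cantor set of $C(2)$ onto an initial block of $C(n)$, the permutation part of $k$ must preserve that block and its complement, so $k$ splits as a direct sum. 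This forces $a_1$ and $b_1$ to have the same number of leaves and gives $[a_1] = [b_1]$, so $[a_1]$ is a common expansion of $[\1]$ and of $[\sigma]$. Granting for the moment that every common expansion of $[\1]$ and $[\sigma]$ is an expansion of $[x]$ — note $[x]$ is itself such a common expansion, being a splitting of $[\1]$ and, via $x\sigma = p_{(1\;2)}x$, of $[\sigma]$ — we get $[a_1]\ge[x]$. Then $w = [(a_1\oplus a_2)x]$ with $[a_1]$ an expansion of $[x]$ and $[a_2]$ an expansion of $[\1]$, which by Lemma~\ref{lemma:ClassifyingExpansions} applied to $[x_1 x] = [(x\oplus\1)x]$ says exactly that $[x_1 x]\le w$.

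The remaining point — every common expansion of $[\1]$ and $[\sigma]$ is an expansion of $[x]$ — is the heart of the matter, and it is here that the recursive structure of the Grigorchuk group enters (in keeping with the remark in the introduction about non-generalizability). I would prove it by induction on the rank of a common expansion $v$. Since $[\1]$ and $[\sigma]$ are distinct vertices of rank $1$, any common expansion has rank $\ge 2$; in rank $2$ the only common expansion is $[x]$, because by Proposition~\ref{prop:ClassifySplittings} the rank-two expansions of $[\1]$ are $[x]$ and $[\sigma_1 x]$ while those of $[\sigma]$ are $[x]$ and $[\sigma_2 x]$ (using $\sigma_1 x\sigma = p_{(1\;2)}\sigma_2 x$). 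For the inductive step one may assume $v\not\ge[x]$; then by Proposition~\ref{prop:ClassifySplittings} and the definition of expansion, $v$ is an expansion of the twisted splitting $[\sigma_1 x]=[(\sigma\oplus\1)x]$ of $[\1]$ and of the twisted splitting $[\sigma_2 x] = [(\1\oplus\sigma)x]$ of $[\sigma]$. Applying Lemma~\ref{lemma:ClassifyingExpansions} to both and matching representatives exactly as above, one finds that the two summands $[a_1]$ and $[a_2]$ in the resulting expression $v = [(a_1\oplus a_2)x]$ are each common expansions of $[\1]$ and $[\sigma]$ of strictly smaller rank; by the inductive hypothesis $[a_1],[a_2]\ge[x]\ge[\1]$, and then $v = [(a_1\oplus a_2)x]\ge[(\1\oplus\1)x] = [x]$ by one more application of Lemma~\ref{lemma:ClassifyingExpansions}. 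I expect this induction, together with the bookkeeping in the matching step, to be the only delicate part; everything else is formal manipulation of the diamond of elementary expansions.
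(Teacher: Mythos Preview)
Your argument is correct. Both you and the paper begin identically: list the four elementary expansions of $[\1]$ as a diamond and reduce to proving that $[x]\le w$ and $[\sigma_1 x]\le w$ together force $[x_1 x]\le w$. The difference lies in how this implication is established. You decompose $w$ over $x$ into two factors via Lemma~\ref{lemma:ClassifyingExpansions}, match representatives to see that the first factor $[a_1]$ is a common expansion of $[\1]$ and of $[\sigma]$, and then prove by induction on rank that any such common expansion dominates~$[x]$. The paper instead jumps to rank three: it asserts (rather tersely, via a Hasse diagram) that if $w\not\ge[x_1 x]$ then $w$ lies above both $[\sigma_1 x_1 x]$ and $[\sigma_2 x_1 x]$, then decomposes $w$ over $x_1 x$ into \emph{three} factors and reads off directly that each of the three dominates $[\1]$---no induction needed. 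Your route is more self-contained (you make the uniqueness of the block decomposition explicit, which the paper uses silently) at the price of the auxiliary inductive lemma; the paper's route is shorter once its rank-three step is granted, and that step can in fact be justified by exactly the chain-of-splittings reasoning you already deploy in your base case and inductive step.
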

\begin{proof}There are only four elementary expansions of $[\1]$, as shown in the following picture.
\[
    \xymatrix@R=2ex@C=0em{
    & [x_1x]\ar@{-}[dr]\ar@{-}[dl] & \\
    [x]\ar@{-}[dr] & & [\sigma_1x]\ar@{-}[dl] & \\
    & [\1] &
    }
\]
Thus it suffices to prove that $[x_1x] \in [u,w]$ whenever $[x] \in [u,w]$ and $[\sigma_1x] \in [u,w]$.

Suppose that $[x] \in [u,w]$ and $[\sigma_1 x]\in [u,w]$, so $[x] \leq w$ and $[\sigma_1 x] \leq w$.  Note that $[\1]$ has only six expansions of rank two or less:
\[
    \xymatrix@R=2ex@C=0em{
    [\sigma_1 x_1 x]\ar@{-}[dr] & & [x_1 x]\ar@{-}[dl]\ar@{-}[dr] & & [\sigma_2 x_1 x]\ar@{-}[dl] \\
    & [x]\ar@{-}[dr] & & [\sigma_1 x]\ar@{-}[dl] & \\
    & & [\1] & &
    }
\]
If $w$ is an expansion of $[x_1x]$ then we are done, so suppose instead that $w$ is a common expansion of $[\sigma_1 x_1 x]$ and $[\sigma_2 x_1 x]$.
Note that
\[
\sigma_1 x_1 x \;=\; (\sigma \oplus \1 \oplus \1) x_1x
\qquad\text{and}\qquad
\sigma_2 x_1 x \;=\; (\1 \oplus \sigma \oplus \1) x_1x
\]
Since $[\sigma_1 x_1 x] \leq w$, Lemma~\ref{lemma:ClassifyingExpansions} tells us that
\[
w \;=\; (f_1 \oplus f_2 \oplus f_3)x_1x
\]
where
\[
[\sigma]\leq [f_1],
\qquad
[\1] \leq [f_2],
\qquad\text{and}\qquad
[\1] \leq [f_3].
\]
But since $[\sigma_2 x_1 x] \leq w$, we also know that
\[
[\1]\leq [f_1],
\qquad
[\sigma] \leq [f_2],
\qquad\text{and}\qquad
[\1] \leq [f_3].
\]
Then $[f_1]$, $[f_2]$, and $[f_3]$ are all expansions of $[\1]$, and therefore $w$ is an expansion of $[(\1\oplus\1\oplus \1)x_1x]=[x_1x]$ by Lemma~\ref{lemma:ClassifyingExpansions}.
\end{proof}

\begin{proposition}Every nonempty interval\/ $[u,w]$ in $\P$ has an elementary core.
\end{proposition}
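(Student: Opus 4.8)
\emph{Proof proposal.} The plan is to reduce the general statement to the rank-one case already established in Lemma~\ref{lemma:ElemCoreIdentity} by treating each Cantor set in the range of $u$ separately. Write $u=[f]$ for some $f\colon C(1)\to C(n)$ in $\V\G$, where $n$ is the rank of $u$, and regard $u$ as $[(\1\oplus\cdots\oplus\1)f]$ with $n$ copies of the identity on $C(1)$. By Lemma~\ref{lemma:ClassifyingExpansions}, every expansion $w$ of $u$ can be written
\[
w \;=\; [(h_1\oplus\cdots\oplus h_n)f]
\]
with each $[h_i]$ an expansion of $[\1]$. Moreover this decomposition is unique at the level of the vertices $[h_i]$: a direct sum over the fixed $f$ is determined up to an element of $K$ in each summand, a consequence of the fact, recorded in Propositions~\ref{prop:ClassifySplittings} and~\ref{thm:double-splitting}, that splittings and double splittings act on a single Cantor set at a time. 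So $[h_1],\dots,[h_n]$ depend only on $w$ and $f$.

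First I would apply Lemma~\ref{lemma:ElemCoreIdentity} to each interval $[[\1],[h_i]]$ to obtain its elementary core $[e_i]$; since the only elementary expansions of $[\1]$ are $[\1]$, $[x]$, $[\sigma_1 x]$, and $[x_1 x]$, we may choose the representative $e_i\in\{\1,x,\sigma_1 x,x_1 x\}$. I then claim that
\[
v_0 \;=\; [(e_1\oplus\cdots\oplus e_n)f]
\]
is the elementary core of $[u,w]$.

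There are three things to check, each an instance of Lemma~\ref{lemma:ClassifyingExpansions}. (i) By the choice of the $e_i$, the vertex $v_0$ is, by definition, an elementary expansion of $u=[f]$. (ii) Since $[\1]\le[e_i]\le[h_i]$ for every $i$, Lemma~\ref{lemma:ClassifyingExpansions} gives $u\le v_0\le w$, so $v_0\in[u,w]$. (iii) Let $v'=[(a_1\oplus\cdots\oplus a_n)f]$ be any elementary expansion of $u$ lying in $[u,w]$, so each $a_i\in\{\1,x,\sigma_1 x,x_1 x\}$ and $v'\le w$. Applying Lemma~\ref{lemma:ClassifyingExpansions} to $v'$ and using the uniqueness of the decomposition of $w$ over $f$, we obtain $[a_i]\le[h_i]$ for every $i$; thus $[a_i]$ is an elementary expansion of $[\1]$ contained in $[[\1],[h_i]]$, hence $[a_i]\le[e_i]$ by definition of the elementary core, and one further application of Lemma~\ref{lemma:ClassifyingExpansions} yields $v'\le v_0$. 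This proves the claim, and in particular that $[u,w]$ has an elementary core.

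The one place that requires genuine care — and hence the main obstacle — is the uniqueness statement used in step (iii): one must know that if $[(a_1\oplus\cdots\oplus a_n)f]=[(b_1\oplus\cdots\oplus b_n)f]$, then $[a_i]=[b_i]$ for all $i$. This follows by tracking which Cantor sets in the range of $f$ each summand occupies, and observing that an element of $K_m$ relating the two direct sums must restrict, block by block, to an element of the corresponding smaller wreath product; equivalently, it is the statement that the poset of elementary expansions above $u$ is the ``product'' of those above its individual Cantor sets. Everything else is routine bookkeeping with direct sums. (Uniqueness of the elementary core itself is then immediate, since two elementary cores of $[u,w]$ would each be an elementary expansion of $u$ in the interval and hence be dominated by the other.)
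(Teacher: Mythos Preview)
Your proposal is correct and follows essentially the same route as the paper: decompose $w$ over $f$ using Lemma~\ref{lemma:ClassifyingExpansions}, take elementary cores coordinatewise via Lemma~\ref{lemma:ElemCoreIdentity}, and reassemble. The only notable difference is that you explicitly flag and justify the uniqueness of the decomposition $[h_1],\dots,[h_n]$, which the paper uses tacitly when it asserts ``since $v\le w$, we also know that $[h_i']\le[g_i]$ for each $i$''; your care there is warranted and your block-by-block argument for it is correct.
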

\begin{proof}Let $f \in \V\G$ so that $u=[f]$.  Since $w$ is an expansion of $u$, we know that
\[
w \;=\; [(g_1 \oplus\cdots \oplus g_n)f]
\]
for some expansions $[g_1],\ldots,[g_n]$ of $[\1]$, where $n$ is the rank of $[f]$.  For each $i$, the interval $\bigl[[\1],[g_i]\bigr]$ has an elementary core $[h_i]$ by Lemma~\ref{lemma:ElemCoreIdentity}, where each $h_i$ is in $\{\1,x,\sigma_1x,x_1x\}$.  We claim that $v_0 = [(h_1\oplus\cdots\oplus h_n)f]$ is an elementary core for~$[u,w]$.

First note that $v_0$ is an elementary expansion of~$u$.  Now let $v$ be any elementary expansion of $u$ such that $v\in[u,w]$.  We know that
\[
v \;=\; [(h_1' \oplus\cdots \oplus h_n')f]
\]
for some $h_1',\ldots,h_n'\in\{\1,x,\sigma_1x,x_1x\}$.  Since $v \leq w$, we also know that $[h_i'] \leq [g_i]$ for each $i$.  Then $[h_i']$ is an elementary expansion of $\1$ and $[h_i'] \in \bigl[[\1],[g_i]\bigr]$, so $[h_i'] \leq [h_i]$ for each~$i$. By Lemma~\ref{lemma:ClassifyingExpansions}, it follows that $v\leq v_0$.
\end{proof}

Note that the elementary core of $[u,w]$ is only equal to $u$ in the case where $u=w$.  For the following proof, we need a proposition of Quillen's.

\begin{proposition}\label{prop:Quillen}Let $X$ be a poset, and suppose there exists an element $x_0\in X$ and a function $f\colon X\to X$ so that
\[
x \geq f(x) \leq x_0
\]
for all $x\in X$.  Then the geometric realization $|X|$ is contractible.
\end{proposition}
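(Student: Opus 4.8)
The plan is to obtain this as a corollary of Quillen's homotopy lemma for posets, using the hypothesis---standard in Quillen's formulation, and here genuinely necessary---that $f$ is order-preserving. (For an arbitrary function the conclusion is false: take the four-element ``crown'' poset, whose realization is a circle, together with a suitable non-monotone $f$ and $x_0$ a maximal element.) Granting that $f$ is a poset map, everything reduces to the following statement of Quillen: \emph{if $g_0,g_1\colon X\to Y$ are order-preserving maps of posets with $g_0(x)\le g_1(x)$ for all $x\in X$, then $|g_0|$ and $|g_1|$ are homotopic as maps $|X|\to|Y|$.}

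First I would prove this lemma by exhibiting the homotopy combinatorially. Let $[1]=\{0<1\}$ and give $X\times[1]$ the product partial order. Define $H\colon X\times[1]\to Y$ by $H(x,0)=g_0(x)$ and $H(x,1)=g_1(x)$. Then $H$ is order-preserving: comparisons at a fixed level use only the monotonicity of $g_0$ or of $g_1$, while for $(x,0)\le(y,1)$ (that is, $x\le y$) one has $H(x,0)=g_0(x)\le g_1(x)\le g_1(y)=H(y,1)$, invoking the pointwise hypothesis and then the monotonicity of $g_1$. Passing to geometric realizations and using the standard homeomorphism $|X\times[1]|\cong|X|\times[0,1]$ coming from the prism triangulation, $|H|$ becomes a map $|X|\times[0,1]\to|Y|$ that restricts to $|g_0|$ on $|X|\times\{0\}$ and to $|g_1|$ on $|X|\times\{1\}$, i.e.\ the desired homotopy.

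With the lemma available, the proposition follows in two lines. The hypothesis $x\ge f(x)$ is exactly the pointwise inequality $f\le\mathrm{id}_X$, so the lemma (with $g_0=f$, $g_1=\mathrm{id}_X$) gives $|f|\simeq\mathrm{id}_{|X|}$. The hypothesis $f(x)\le x_0$ is exactly the pointwise inequality $f\le c$, where $c\colon X\to X$ is the constant map with value $x_0$ (which is trivially order-preserving), so the lemma (with $g_0=f$, $g_1=c$) gives $|f|\simeq|c|$, a constant map. Composing the two homotopies yields $\mathrm{id}_{|X|}\simeq|f|\simeq\mathrm{const}$, so $|X|$ is contractible.

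The only real obstacle is the prism lemma itself, and within it the identification $|X\times[1]|\cong|X|\times[0,1]$ together with the continuity and boundary behaviour of $|H|$; this is classical but slightly technical. If I wished to bypass that identification I would instead build the homotopy directly on each closed simplex of $|X|$ by straight-line interpolation between the two images and check that the pieces agree on overlapping faces---the usual textbook route to Quillen's lemma. The other point to watch throughout is the order-preservation of $f$ flagged at the outset, which must be in force (or checked in the application) for the argument---and the statement---to be correct.
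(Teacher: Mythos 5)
Your proof is correct, and it is worth saying up front that the paper itself offers no proof of this proposition: it simply cites Section~1.5 of Quillen's paper. What you have written is precisely the standard argument behind that citation --- the prism lemma that two order-preserving maps $g_0\le g_1$ induce homotopic maps on realizations, applied twice to connect $\mathrm{id}_{|X|}$ to $|f|$ and then $|f|$ to the constant map at $x_0$ --- and both applications and the composition of homotopies are carried out correctly. Your one substantive divergence from the text is the observation that the proposition as literally stated omits the hypothesis that $f$ be order-preserving, and you are right that without it the statement is false: in the four-element poset with minimal elements $a,b$, maximal elements $c,d$, and all four relations $a<c$, $a<d$, $b<c$, $b<d$, taking $x_0=c$ and $f$ fixing $a$ and $b$ while sending $c\mapsto a$ and $d\mapsto b$ satisfies $x\ge f(x)\le x_0$ for every $x$, yet the realization is a circle. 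The missing hypothesis is implicit in Quillen's formulation, where ``map of posets'' means order-preserving. For completeness one should check it where the paper actually invokes the proposition, namely in Lemma~\ref{lemma:ContractibleInterval}, where $f(v)$ is the elementary core of $[u,v]$: if $v\le v'$ then every elementary expansion of $u$ lying below $v$ also lies below $v'$, so the core of $[u,v]$ lies below the core of $[u,v']$, and $f$ is indeed order-preserving. With that hypothesis added, your argument is complete and the paper's application is unaffected.
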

\begin{proof}See \cite{Quillen}, Section~1.5.
\end{proof}

We say that an interval $[v,w]$ in $\P$ is \newword{non-elementary} if $v \leq w$ and $w$ is not an elementary expansion of $v$.

\begin{lemma}\label{lemma:ContractibleInterval}Let $[u,w]$ be a non-elementary interval in $\P$, and let
\[
(u,w) \;=\; \{v\in \P \mid u<v<w\}.
\]
Then the geometric realization $|(u,w)|$ is contractible.
\end{lemma}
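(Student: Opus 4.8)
The plan is to apply Quillen's Proposition~\ref{prop:Quillen} to the poset $X = (u,w)$, so I need to produce a point $x_0 \in (u,w)$ and a poset map $f\colon (u,w)\to(u,w)$ with $x \ge f(x) \le x_0$ for all $x$. The natural choice for $x_0$ is the elementary core of $[u,w]$; since $[u,w]$ is non-elementary, $u < x_0 < w$, so $x_0$ genuinely lies in the open interval. The natural choice for $f$ is the map sending a vertex $v\in(u,w)$ to the elementary core of the subinterval $[u,v]$. I must check three things: that $f(v)$ actually lies in $(u,w)$, that $f$ is order-preserving, and that $f(v) \le x_0$ for all $v$.

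First I would verify $f(v)\in(u,w)$. Since $u < v$, the interval $[u,v]$ is nonempty and strictly larger than $\{u\}$; its elementary core $f(v)$ satisfies $u \le f(v) \le v < w$, so $f(v) < w$. The only worry is whether $f(v)$ could equal $u$; this happens exactly when $u$ has no nontrivial elementary expansion below $v$, but any vertex strictly above $u$ dominates at least one single splitting of $u$, hence at least one nontrivial elementary expansion of $u$ (namely some $[x_i f]$ or $[\sigma_i x_i f]$), so $f(v) > u$. Thus $f(v)\in(u,w)$, and by construction $f(v) \le v$, giving the inequality $v \ge f(v)$.

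Next, $f$ is order-preserving: if $v \le v'$ in $(u,w)$ then $[u,v]\subseteq[u,v']$, so every elementary expansion of $u$ below $v$ is also below $v'$, hence below the elementary core of $[u,v']$; taking the common expansion of all of them (which is how the core is built, via Lemma~\ref{lemma:ClassifyingExpansions} and the preceding proposition) shows $f(v) \le f(v')$. Finally, $f(v) \le x_0$: every elementary expansion of $u$ lying below $v$ lies below $w$ as well (since $v < w$), so it is one of the elementary expansions of $u$ in $[u,w]$ that the elementary core $x_0$ dominates; hence $f(v)$, being their common expansion, satisfies $f(v) \le x_0$. With $x \ge f(x) \le x_0$ established for all $x\in(u,w)$, Proposition~\ref{prop:Quillen} gives that $|(u,w)|$ is contractible.

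The main obstacle is the careful bookkeeping in the order-preservation step and in showing $f(v)\ne u$ — one has to be sure that "elementary core of $[u,v]$" behaves functorially with respect to enlarging $v$, which rests on the description of expansions componentwise via Lemma~\ref{lemma:ClassifyingExpansions}; and one must handle the edge case where $v$ is itself an elementary expansion of $u$ (then $f(v)=v$, which is fine and consistent). Everything else is a direct unwinding of the elementary-core construction already established.
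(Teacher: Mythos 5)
Your proof is correct and follows essentially the same route as the paper: apply Proposition~\ref{prop:Quillen} with $x_0$ the elementary core of $[u,w]$ and $f(v)$ the elementary core of $[u,v]$. You spell out the verifications (in particular that $f(v)>u$ and that $f$ is order-preserving, a hypothesis Quillen's result genuinely needs) that the paper's one-line proof leaves implicit, and these checks are all sound.
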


This proof is the same as the proof in Lemma~2.4 of \cite{Fluch}, which itself derives from the proof of the lemma in Section~4 of~\cite{Brown2}.

\begin{proof}Let $v_0$ be the elementary core of $[u,w]$, and note that $v_0 \in (u,w)$ since $w$ is not an elementary expansion of~$u$.  For each $v \in (u,w)$, let $f(v)$ be the elementary core of the interval $[u,v]$, and note that $f(v)$ is always an element of~$(u,w)$.  Moreover, $f(v) \leq v$ and $f(v) \leq v_0$ for all $v\in (u,w)$, and therefore $|(u,w)|$ is contractible by Proposition~\ref{prop:Quillen}.
\end{proof}

\begin{proposition}The complex $\K$ is contractible
\end{proposition}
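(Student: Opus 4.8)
The plan is to show that the inclusion $\K\hookrightarrow|\P|$ is a homotopy equivalence; since $|\P|$ is contractible by Corollary~\ref{cor:ContractibleP}, this forces $\K$ to be contractible. The idea is to rebuild $|\P|$ out of $\K$ by adjoining, one batch at a time, the simplices of $|\P|$ that are not elementary, grouping them by their least and greatest vertices. For a non-elementary interval $[u,w]$ in $\P$ (meaning $u<w$ and $w$ is not an elementary expansion of $u$), let $S(u,w)$ be the set of simplices $v_0<\cdots<v_k$ of $|\P|$ with $v_0=u$ and $v_k=w$. Since $w$ is not an elementary expansion of $u$, every simplex in $S(u,w)$ is non-elementary; conversely, each non-elementary simplex of $|\P|$ lies in exactly one $S(u,w)$, the one whose endpoints are its least and greatest vertices. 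Thus $|\P|$ is the union of $\K$ with all of the $S(u,w)$.

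Next I would well-order the non-elementary intervals so that $[u,w]$ precedes $[u',w']$ whenever $\rank(w)<\rank(w')$, and, among intervals with $\rank(w)=\rank(w')$, whenever $\rank(u)>\rank(u')$, breaking remaining ties arbitrarily; this is a well-ordering because the ranks involved are positive integers. Adjoining $S(u,w)$ to the subcomplex assembled from $\K$ and all earlier batches amounts to gluing on the closure $\overline{S(u,w)}$ along $\overline{S(u,w)}\setminus S(u,w)$. There are two homotopy facts to record: first, $\overline{S(u,w)}$ is the geometric realization $|[u,w]|$ of the closed interval, which is the join $\{u\}*|(u,w)|*\{w\}$, hence a cone with apex $u$ and so contractible; second, $\overline{S(u,w)}\setminus S(u,w)$ is the subcomplex of chains in $[u,w]$ that omit $u$ or omit $w$, which is the join $|(u,w)|*\{u,w\}$ of $|(u,w)|$ with the two-point discrete space (the edge $uw$ being absent, as it belongs to $S(u,w)$), i.e.\ the unreduced suspension of $|(u,w)|$, and hence contractible by Lemma~\ref{lemma:ContractibleInterval}. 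In addition one must verify that $\overline{S(u,w)}\setminus S(u,w)$ has already been built when $S(u,w)$ is adjoined, and that it is exactly the intersection of $\overline{S(u,w)}$ with the subcomplex built so far: a chain in $[u,w]$ omitting $w$ has top vertex of rank $<\rank(w)$, so it is either elementary, hence in $\K$, or it lies in an earlier $S(u',w')$; a non-elementary chain omitting $u$ but containing $w$ has bottom vertex of rank $>\rank(u)$, so it lies in an $S(u',w')$ with $\rank(w')=\rank(w)$ and $\rank(u')>\rank(u)$, which is again earlier; and no simplex of $S(u,w)$ itself can lie in $\K$ or in an earlier batch, since its least and greatest vertices are exactly $u$ and $w$.

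It then follows from the standard gluing lemma---if $L=K'\cup D$ with $D$ and $K'\cap D$ both contractible subcomplexes, then $K'\hookrightarrow L$ is a homotopy equivalence---that each stage of the construction includes into the next by a homotopy equivalence. Since $|\P|$ is the increasing (possibly transfinite) union of these stages and homotopy groups commute with such colimits of subcomplex inclusions, the inclusion $\K\hookrightarrow|\P|$ induces isomorphisms on all homotopy groups and is therefore a homotopy equivalence; hence $\K$ is contractible.

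I expect the main obstacle to be the bookkeeping in the second paragraph: one has to choose the enumeration of non-elementary intervals so that the ``boundary'' $\overline{S(u,w)}\setminus S(u,w)$ of each newly attached piece is already present, and the two levels of the ordering---$\rank(w)$ increasing, then $\rank(u)$ decreasing---are precisely what is needed to absorb, respectively, the faces obtained by dropping the top vertex $w$ and those obtained by dropping the bottom vertex $u$. The homotopy-theoretic input is comparatively soft: $\overline{S(u,w)}$ is contractible for the trivial reason that it is a cone, and the contractibility of its boundary is exactly the content of Lemma~\ref{lemma:ContractibleInterval}, applied through the identification of that boundary with the suspension of $|(u,w)|$.
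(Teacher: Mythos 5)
Your proposal is correct and follows essentially the same route as the paper: both rebuild $|\P|$ from $\K$ by attaching the realizations $|[u,w]|$ of non-elementary intervals (contractible as cones), each glued along $|[u,w)|\cup|(u,w]|$, which is the suspension of $|(u,w)|$ and hence contractible by Lemma~\ref{lemma:ContractibleInterval}. The only difference is cosmetic: the paper orders the intervals by length $\rank(w)-\rank(u)$ rather than your lexicographic ordering on $(\rank(w),-\rank(u))$, and you spell out more of the bookkeeping that the paper leaves implicit.
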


Again, this proof is the same as the proof in Corollary~2.5 of \cite{Fluch}, which itself derives from a proof in~\cite{Brown2}.

\begin{proof}Define the \textit{length} of a non-elementary interval $[v,w]$ in $\mathcal{P}$ to be the difference of the ranks of $v$ and~$w$.  Suppose we start with~$\K$, and attach the geometric realizations $|[v,w]|$ of non-elementary intervals in $\P$ increasing order of length.  Clearly each $|[v,w]|$ is contractible.  Moreover, each $|[v,w]|$ is being attached along $|[v,w)| \cup |(v,w]|$, which is simply the suspension of $|(v,w)|$.  Since $|(v,w)|$ is contractible by Lemma~\ref{lemma:ContractibleInterval}, it follows that $|[v,w)| \cup |(v,w]|$ is contractible, and therefore attaching $|[v,w]|$ does not change the homotopy type.  But the end result of attaching all of these complexes is~$|\P|$, which is contractible by Corollary~\ref{cor:ContractibleP}, so $\K$ must itself be contractible
\end{proof}

\bigskip
\section{A Polysimplicial Complex}
In this section, we introduce a polysimplicial complex~$\Knew$ of which $\K$ is a simplicial subdivision.  Here the word \newword{polysimplex} refers to any Euclidean polytope obtained by taking a product of simplices.  Thus a \newword{polysimplicial complex} is an affine cell complex whose cells are polysimplices, with the property that the intersection of any two non-disjoint cells is a common face of each.

\begin{note}
This notion of a polysimplicial complex is more general than the one introduced by Bruhat and Tits in \cite{BrTi} and used in the theory of buildings.  In particular, we place no requirements on the dimensions of the polysimplices, and we do not require the existence of galleries joining pairs of cells.
\end{note}

Polysimplicial complexes are a common generalization of simplicial complexes and cubical complexes.  Note that cubes are indeed polysimplicial, being products of $1$-simplices.  The polysimplicial complex $\Knew$ that we will define can be viewed as an analogue for $\VG$ of Farley's cubical complexes for $F$, $T$, and $V$ (see \cite{Farley1,Farley2}).  The Stein complexes for the Brin-Thompson groups $nV$ defined in \cite{Fluch} are also simplicial subdivisions of polysimplicial complexes, and the approach we use here to analyze the descending links of $\Knew$ would work just as well for these complexes.

We begin by defining a collection of simplicially subdivided polysimplices within our complex $\K$.

\begin{definition}Let $[f]$ be a vertex of rank $n$ in~$\K$, and for each $i\in \{1,\ldots,n\}$ let $S_i$ be one of the following sets:
\[
\{\1\},\quad \{\1,x\},\quad\{\1,x,x_1 x\},\quad\{\1,\sigma_1 x\},\quad \{\1,\sigma_1 x, x_1 x\},\quad\text{or}\quad \{\1,x_1 x\}.
\]
Then the corresponding \newword{basic polysimplex} in $\K$, denoted $\stope(f,S_1,\ldots,S_n)$, is the full subcomplex of $\K$ spanned by the following set of vertices:
\[
\bigl\{[(u_1\oplus\cdots \oplus u_n)f] \;\bigl|\; u_i\in S_i\text{ for all }i\bigr\}.
\]
\end{definition}

\smallskip Each basic polysimplex has the combinatorial structure of a simplicial subdivision of a polysimplex.  In particular,
\[
\stope(f,S_1,\ldots,S_n) \;\cong\; \Delta^{d_1}\times \cdots \times \Delta^{d_n},
\]
where $\Delta^{d_i}$ denotes a simplex of dimension $d_i = |S_i| - 1$.

Note that if $f$ and $f'$ are two representatives for the same vertex, then every basic polysimplex $\stope(f',S_1',\ldots,S_n')$ can be written as $\stope(f,S_1,\ldots,S_n)$ for some sets $S_1,\ldots,S_n$.  That is, the basic polysimplices based at a vertex $[f]$ do not depend on the chosen representative~$f$.

\begin{lemma}
\label{thm:intersection-polysimplices}
The intersection of two non-disjoint basic polysimplices is a common face of each.
\end{lemma}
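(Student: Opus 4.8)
The plan is to reduce the statement about intersections of basic polysimplices to the corresponding (easy) statement about products of simplices, using the coordinatewise description of expansions provided by Lemma~\ref{lemma:ClassifyingExpansions}. First I would normalize the two polysimplices so that they are based at a common representative. Suppose $P = \stope(f,S_1,\ldots,S_n)$ and $Q = \stope(g,T_1,\ldots,T_m)$ are non-disjoint. Picking a vertex $[h]$ in $P\cap Q$ and using the directedness of $\P$ (Proposition~\ref{prop:DirectedSet}) together with Lemma~\ref{lemma:ClassifyingExpansions}, I would find a common expansion and rewrite both $P$ and $Q$ as basic polysimplices based at the \emph{same} representative $f$ of a \emph{common lower vertex} of rank~$n$; concretely, after passing to this representative, $P=\stope(f,S_1,\ldots,S_n)$ and $Q=\stope(f,T_1,\ldots,T_n)$ with all $S_i,T_i$ drawn from the six allowed sets. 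The remark immediately preceding the lemma (that basic polysimplices based at a vertex do not depend on the chosen representative) is exactly what licenses this step.

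Next I would identify the intersection. A vertex of $P$ has the form $[(u_1\oplus\cdots\oplus u_n)f]$ with $u_i\in S_i$, and a vertex of $Q$ has the form $[(v_1\oplus\cdots\oplus v_n)f]$ with $v_i\in T_i$. Because $f$ is a fixed common representative, Lemma~\ref{lemma:ClassifyingExpansions} (applied with all the $g_i$ equal to $\1$) shows that two such expressions represent the same vertex of $\P$ if and only if $[u_i]=[v_i]$ for every~$i$, and since $\1,x,\sigma_1x,x_1x$ lie in distinct $K$-cosets of the relevant rank this forces $u_i=v_i$. Hence $P\cap Q$ consists precisely of the vertices $[(w_1\oplus\cdots\oplus w_n)f]$ with $w_i\in S_i\cap T_i$ for all~$i$, i.e. it is the full subcomplex on $\stope(f,S_1\cap T_1,\ldots,S_n\cap T_n)$. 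Since $P$ and $Q$ are \emph{full} subcomplexes of $\K$ by definition, their intersection is also full, so it is exactly $\stope(f,S_1\cap T_1,\ldots,S_n\cap T_n)$.

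It then remains to check that each $S_i\cap T_i$ is again one of the six admissible sets, so that $\stope(f,S_1\cap T_1,\ldots,S_n\cap T_n)$ is a genuine basic polysimplex, and that it is a face of both $P$ and $Q$. The six sets, ordered by inclusion, form a small poset: $\{\1\}$ is contained in $\{\1,x\}$, $\{\1,\sigma_1x\}$, and $\{\1,x_1x\}$; $\{\1,x\}$ and $\{\1,x_1x\}$ are each contained in $\{\1,x,x_1x\}$; and $\{\1,\sigma_1x\}$ and $\{\1,x_1x\}$ are each contained in $\{\1,\sigma_1x,x_1x\}$. A short case check (the only ``nontrivial'' intersections being $\{\1,x,x_1x\}\cap\{\1,\sigma_1x,x_1x\}=\{\1,x_1x\}$, $\{\1,x,x_1x\}\cap\{\1,\sigma_1x\}=\{\1\}$, and $\{\1,x\}\cap\{\1,\sigma_1x,x_1x\}=\{\1\}$) confirms that the intersection of any two of the six sets is again one of the six. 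Finally, $S_i\cap T_i\subseteq S_i$ means $\stope(f,S_1\cap T_1,\ldots)$ is spanned by a subset of the vertices of $P$, and since both are full subcomplexes this realizes it as a subcomplex of $P$ corresponding to a product of faces $\Delta^{|S_i\cap T_i|-1}\subseteq\Delta^{|S_i|-1}$, i.e. a genuine face of the polysimplex $P$; symmetrically for $Q$. I expect the main obstacle to be the first step — carefully justifying that two non-disjoint basic polysimplices can be rewritten over a common representative of a common lower vertex — since this requires combining directedness, Proposition~\ref{prop:ClassifySplittings}, and Lemma~\ref{lemma:ClassifyingExpansions} and tracking how the defining sets $S_i$ transform; the combinatorics of the six-element poset and the fullness argument are routine by comparison.
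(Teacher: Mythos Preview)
Your proposal has a genuine gap in the first step, and you correctly identify it as the main obstacle---but the obstacle is more serious than you suggest.

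The claim that both $P$ and $Q$ can be rewritten as $\stope(f,S_1,\ldots,S_n)$ and $\stope(f,T_1,\ldots,T_n)$ for the \emph{same} $f$ is false in general. Since $\1$ belongs to every admissible $S_i$, the vertex $[f]$ is the unique minimal-rank vertex of any $\stope(f,S_1,\ldots,S_n)$; thus the base vertex of a basic polysimplex is intrinsic to it. Two non-disjoint polysimplices need not share their base vertex: take $P=\stope(\1,\{\1,x\})$ with vertices $[\1],[x]$ and $Q=\stope(x,\{\1,x\},\{\1\})$ with vertices $[x],[x_1x]$; these meet in $\{[x]\}$ but have distinct bases $[\1]$ and $[x]$. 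The remark preceding the lemma only lets you change the \emph{representative} of a fixed base vertex, not the base vertex itself, and directedness (Proposition~\ref{prop:DirectedSet}) gives common upper bounds, not lower ones. So neither of the tools you invoke produces a common lower vertex, and in general no such vertex below both bases need exist.

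What is actually needed---and what the paper supplies---is to show that $P\cap Q$ itself has a minimum vertex. The paper does this by introducing a coordinatewise-minimum operation $\land$ on each polysimplex and proving, via an explicit computation with the equation for $fg^{-1}$, that the two a priori different $\land$ operations coming from $P$ and from $Q$ agree on $P\cap Q$. Only after locating this minimum $v$ does one pass to the faces of $P$ and $Q$ above $v$, at which point both faces genuinely have $v$ as their base and your coordinatewise argument (which is fine) finishes the job. Your second and third paragraphs are essentially the paper's endgame; the missing ingredient is precisely the construction and well-definedness of that minimum.
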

\begin{proof}Let $P = \stope(f,S_1,\ldots,S_m)$ and $Q = \stope(g,T_1,\ldots,T_n)$ be two basic polysimplices with nonempty intersection.  Define a binary operation $\land$ on the vertices of $P$ by the formula
\[
[(s_1 \oplus \cdots \oplus s_m)f] \land [(s_1' \oplus \cdots \oplus s_m')f] \;=\; \bigl[\bigl(\min(s_1,s_1'
) \oplus \cdots \oplus \min(s_m,s_m')\bigr) f\bigr].
\]
and define a similar binary operation on the vertices of $Q$.  We claim that the two definitions of $\land$ agree on the vertices of $P\cap Q$.

Let $v$ and $v'$ be vertices of $P\cap Q$.  Note that the definition of $\land$ is preserved by restrictions to faces.  Therefore, without loss of generality, we may assume that $v\land v' = [f]$ in~$P$, and $v\land v' = [g]$ in~$Q$. Then
\[
v \;=\; [(s_1 \oplus \cdots \oplus s_m)f] \;=\; [(t_1\oplus\cdots\oplus t_n)g]
\]
for some $s_i\in S_i$ and $t_i\in T_i$, and similarly
\[
v' \;=\; [(s_1' \oplus \cdots \oplus s_m')f] \;=\; [(t_1'\oplus\cdots\oplus t_n')g]
\]
for some $s_i'\in S_i$ and $t_i'\in T_i$. Then
\begin{align*}
(s_1 \oplus \cdots \oplus s_m)f =p_{\alpha}(k_1\oplus\cdots\oplus k_r) (t_1\oplus\cdots\oplus t_n)g \\[3pt]
\text{and}\qquad(s_1' \oplus \cdots \oplus s_m')f =p_{\beta}(k_1'\oplus\cdots\oplus k_p') (t_1'\oplus\cdots\oplus t_n')g
\end{align*}
for some $k_1,\ldots,k_r,k_1',\ldots,k_p'\in K$ and permutations $\alpha$ and $\beta$.  Solving for $fg^{-1}$ in both of these equations gives
\begin{multline*}
fg^{-1} \;=\; (s_1 \oplus \cdots \oplus s_m)^{-1}p_{\alpha}(k_1\oplus\cdots\oplus k_r) (t_1\oplus\cdots\oplus t_n) \\[3pt]
\;=\;
(s_1' \oplus \cdots \oplus s_m')^{-1}p_{\beta}(k_1'\oplus\cdots\oplus k_p') (t_1'\oplus\cdots\oplus t_n').
\end{multline*}
Now, since $v\land v' = [f]$, we know that $\min(s_i,s_i') = \1$
for each $i$, so either $s_i = \1$ or $s_i' = \1$ for each~$i$, and the same holds true for $t_i$ and~$t_i'$.
Then the only possibility is that $m=n$ and
\[
fg^{-1} \;=\; p_\gamma (k_1''\oplus\cdots\oplus k_n'')
\]
for some permutation $\gamma$ and some $k_1'',\ldots,k_n'' \in K$, and hence $[f]=[g]$.  This proves that the two definitions of $\land$ agree on $P\cap Q$.


Now, let $v$ be the vertex $v_1\land \cdots \land v_k$, where $v_1,\ldots,v_k$ are the vertices of~$P\cap Q$.  Then $v$ must be a vertex of $P\cap Q$, and indeed is a minimum for the vertices of $P\cap Q$. Note that the full subcomplex of $P$ spanned by the vertices of $P$ that are greater than or equal to~$v$ is a face of~$P$, and similarly the full subcomplex of $Q$ spanned by the vertices of $Q$ that are greater than or equal to $Q$ is a full subcomplex of~$Q$.  Therefore, without loss of generality, we may assume that $[f] = [g] = v$.  Indeed, we may as well assume that $f=g$.  Then
\[
P\cap Q \;=\; \stope(f,S_1\cap T_1,\ldots,S_m\cap T_m)
\]
which is a common face of each.
\end{proof}

\begin{proposition}The basic polysimplices in the Stein complex $\K$ form a polysimplicial complex~$\Knew$, which has $\K$ as a simplicial subdivision.
\end{proposition}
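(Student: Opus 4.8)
The plan is to check the three conditions defining a polysimplicial complex for the family $\Knew$ of basic polysimplices in $\K$---that each cell is a polysimplex, that the family is closed under passage to faces, and that two non-disjoint cells intersect in a common face---and then to verify separately that the simplicial complex $\K$ subdivides $\Knew$. Two of the three conditions are already in hand: the first is the remark following the definition of basic polysimplex (which gives $\stope(f,S_1,\ldots,S_n)\cong\Delta^{d_1}\times\cdots\times\Delta^{d_n}$), and the third is Lemma~\ref{thm:intersection-polysimplices}. So the substantive points are the closure under faces and the fact that every simplex of $\K$ lies inside some basic polysimplex; granting the latter, $|\Knew|=|\K|$, and moreover each basic polysimplex is a full subcomplex of $\K$ that is the standard triangulation of a polysimplex, so $\K$ restricted to each cell of $\Knew$ triangulates it, and these triangulations match along overlaps because overlaps are common faces (Lemma~\ref{thm:intersection-polysimplices}); hence $\K$ is a simplicial subdivision of $\Knew$.

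For the face-closure step it is convenient first to observe that the six allowed values of $S_i$ are exactly the chains containing $\1$ in the ``diamond'' poset $D = \{\,\1<x<x_1x,\ \1<\sigma_1x<x_1x\,\}$ of elementary expansions of $[\1]$, and (via Lemma~\ref{lemma:ClassifyingExpansions}) that $\stope(f,S_1,\ldots,S_n)$ is the order complex of the product poset $S_1\times\cdots\times S_n$. A face of this polysimplex is the subcomplex on the vertices coming from a product $T_1\times\cdots\times T_n$ with $T_i\subseteq S_i$. To exhibit it as a basic polysimplex I would re-base at $g=(m_1\oplus\cdots\oplus m_n)f$, where $m_i=\min T_i$; using the identities relating $[x_i\sigma_ix_if]$, $[x_ix_if]$, $[x_ib_if]$, and their companions established around Propositions~\ref{prop:ClassifySplittings} and~\ref{thm:double-splitting}, each $T_i$ becomes, over the new Cantor sets created by $m_i$, a product of sets each again of one of the six allowed types. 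This is a finite case analysis (six possibilities for $S_i$, at most four for $m_i$).

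For the covering claim, let $\tau$ be a simplex of $\K$, that is, a chain $v_1<\cdots<v_k$ with $v_k$ an elementary expansion of $v_1=[f]$, say $v_k=[(u_1\oplus\cdots\oplus u_n)f]$ with each $u_i\in\{\1,x,\sigma_1x,x_1x\}$. By Lemma~\ref{lemma:ClassifyingExpansions} each $v_j$ has the form $[(h_1^{(j)}\oplus\cdots\oplus h_n^{(j)})f]$ with $[h_i^{(j)}]$ an expansion of $[\1]$, and this coordinatewise description is unique: an element of $K_N$ taking one representative of such a vertex to another must be block-diagonal with respect to the decomposition induced by $f$, which forces the $i$th coordinate cosets to coincide. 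Hence the chain $v_1<\cdots<v_k$ gives $[h_i^{(1)}]\le\cdots\le[h_i^{(k)}]\le[u_i]$ for each fixed $i$, so the $[h_i^{(j)}]$ trace out a chain in the interval $[[\1],[u_i]]$---a single vertex if $u_i=\1$, an edge if $u_i\in\{x,\sigma_1x\}$, and all of $D$ if $u_i=x_1x$. Since a chain in $D$ cannot contain both $[x]$ and $[\sigma_1x]$, in every case $\{[h_i^{(j)}]\}_j$ lies in one of the six allowed sets $S_i$, and therefore $\tau\subseteq\stope(f,S_1,\ldots,S_n)$.

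I expect the main obstacle to be the face-closure step. Although it reduces to a bounded check, one has to track carefully what a subset $T_i\subseteq S_i$ becomes after re-basing at its minimum, and in particular confirm that it really does land in one of the six permitted sets in each of the Cantor sets newly produced by $m_i$; this is exactly the place where the non-obvious relations among $x_i$, $\sigma_ix_i$, and the Grigorchuk generators $b_i,c_i,d_i$ enter, since without them the re-based $T_i$ would look like an arbitrary subset rather than a chain through $\1$.
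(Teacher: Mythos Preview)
Your argument is correct and follows the same outline as the paper's: show that every simplex of $\K$ lies in a basic polysimplex, that faces of basic polysimplices are again basic polysimplices, and invoke Lemma~\ref{thm:intersection-polysimplices} for intersections. The paper is in fact terser than you are---it declares face closure ``clear from the definition'' and does not explicitly check that the sets $S_j$ assembled in the covering step are among the six permitted chains---so your rebasing argument and your coordinatewise-uniqueness observation fill in details the paper leaves to the reader; your worry that face closure is the main obstacle is therefore a bit overcautious.
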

\begin{proof}Note first that each simplex of $\K$ lies in the interior of a unique basic polysimplex. Specifically, given a $k$-simplex $\Delta = (v_0 < \cdots < v_k)$ in $\K$, let $f$ be a representative for~$v_0$. Then each vertex $v_i$ of this simplex has the form
\[
v_i \;=\; [(u_{i,1} \oplus \cdots \oplus u_{i,n})f]
\]
for some $u_{i,j} \in \{\1,x,\sigma_1x,x_1x\}$, so $\Delta$ is contained in the interior of the basic polysimplex $\stope(f,S_1,\ldots,S_n)$, where each $S_j = \{u_{0,j},u_{1,j},\ldots,u_{k,j}\}$.

It should be clear from the definition that each face of a basic polysimplex is again a basic polysimplex.  Furthermore, Lemma \ref{thm:intersection-polysimplices}
shows that the intersection of two non-disjoint basic polysimplices is a common face of each.  We conclude that $\Knew$ is a polysimplicial complex.
\end{proof}

Note that the vertices of $\Knew$ are all the elements of $\P$ (i.e.~the same vertices as~$\K$) and each edge of $\Knew$ corresponds to either a splitting or a double splitting of a vertex in~$\P$. Note also that elements of $\VG$ map basic polysimplices to basic polysimplices, and therefore $\VG$ acts on the complex $\Knew$.

By the way, even though $\K$ is a simplicial subdivision of $\Knew$, it is \textit{not} simply the barycentric subdivision of~$\Knew$. For example, each square of $\Knew$ is the union of two triangles from~$\K$.

Because a polysimplicial complex is an affine cell complex, we can apply Bestvina-Brady Morse theory~\cite{BestvinaBrady} to~$\Knew$. This is based on the following definition.

\begin{definition}Let $X$ be an affine cell complex.  A \newword{Morse function} on $X$ is a map $\phi\colon X\to\R$ such that
\begin{enumerate}
\item $\phi$ restricts to a non-constant affine linear map on each cell of $X$ of dimension one or greater, and\smallskip
\item the image under $\phi$ of the $0$-skeleton of $X$ is discrete in $\R$.
\end{enumerate}
If $\phi$ is a Morse function on $X$ and $r\in\R$, the \newword{sublevel complex} $X^{\leq r}$ is the subcomplex of $X$ consisting of all cells that are contained in $\phi^{-1}\bigl((-\infty,r]\bigr)$.  If $v$ is a vertex in $X$, the \newword{descending link} of $v$ is its link in the corresponding sublevel complex:
\[
\dlink(v) \,=\, \mathrm{lk}\bigl(v,X^{\leq\phi(v)}\bigr).
\]
\end{definition}

Note that, if $X$ is a polysimplicial complex, then the descending link of any vertex $v$ in $X$ is a simplicial complex.  If $X$ itself is not simplicial, this descending link cannot be viewed as a subcomplex of~$X$.  For example, although each vertex of $\dlink(v)$ corresponds to a vertex of $X^{\leq\phi(v)}$ that is adjacent to~$v$, two such vertices are connected by an edge in $\dlink(v)$ if and only if the corresponding vertices of $X^{\leq\phi(v)}$ lie in a common $2$-cell containing~$v$.

By now, the following combination of the Betvina-Brady Morse lemma \cite{BestvinaBrady} with Brown's criterion \cite{Brown1} is standard.

\begin{theorem}Let $G$ be a group acting cellularly on a contractible affine cell complex~$X$, and let $\phi\colon X\to \R$ be a Morse function on~$X$. Suppose that:
\begin{enumerate}
\item Each sublevel complex $X^{\leq r}$ has finitely many orbits of cells.\smallskip
\item The stabilizer of each vertex in $X$ is finite.\smallskip
\item For each $k\in\N$, there exists an $r\in\R$ so that the descending link of each vertex in $\phi^{-1}\bigl([r,\infty)\bigr)$ is $k$-connected.
\end{enumerate}
Then $G$ has type $F_\infty$.
\end{theorem}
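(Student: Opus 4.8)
The plan is to deduce the theorem by assembling its two cited ingredients, the Bestvina--Brady Morse lemma \cite{BestvinaBrady} and Brown's finiteness criterion \cite{Brown1}; there is essentially nothing to do beyond bookkeeping. Since a group has type $F_\infty$ precisely when it has type $F_n$ for every $n$, I would fix $n\in\N$ and show that $G$ has type $F_n$. First I would set up the filtration: because $\phi$ is a real-valued function on all of $X$ whose restriction to each (bounded) cell is affine, every cell lies in $X^{\leq r}$ for $r$ large, so any increasing sequence $r_1<r_2<\cdots$ with $r_i\to\infty$ gives an exhausting filtration $X=\bigcup_i X^{\leq r_i}$ by subcomplexes. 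These are $G$-invariant (here one uses that $\phi$ is $G$-invariant, as it is in the application, where it is the rank function, so that hypothesis~(1) really does give $G$-cocompactness), and every cell stabilizer is finite by hypothesis~(2), hence of type $F_\infty$. Granting all this, Brown's criterion reduces ``$G$ has type $F_n$'' to the single statement that the filtration $(X^{\leq r_i})$ is \emph{essentially $(n-1)$-connected}: for each $i$ there is $j\geq i$ with $X^{\leq r_i}\hookrightarrow X^{\leq r_j}$ trivial on $\pi_k$ for all $k\leq n-1$.

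The heart of the argument is establishing that essential $(n-1)$-connectivity, and here I would feed hypothesis~(3) into the Morse lemma. Applying~(3) with $k=n-1$ produces a threshold $r$ so that $\dlink(v)$ is $(n-1)$-connected for every vertex $v$ with $\phi(v)\geq r$. The Bestvina--Brady Morse lemma then tells us that for reals $s<t$ the sublevel complex $X^{\leq t}$ is homotopy equivalent to the complex built from $X^{\leq s}$ by coning off, for each vertex $v$ with $s<\phi(v)\leq t$, a copy of $\dlink(v)$. If moreover $s\geq r$, each such descending link is $(n-1)$-connected, so coning it on is, up to homotopy, the same as attaching cells of dimension at least $n+1$; hence $X^{\leq s}\hookrightarrow X^{\leq t}$ is an isomorphism on $\pi_k$ for $k\leq n-1$. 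Because $X=\bigcup_t X^{\leq t}$ and every map of a sphere or disk into $X$ has image in a finite subcomplex, a standard compactness argument upgrades this to: $X^{\leq s}\hookrightarrow X$ is an isomorphism on $\pi_k$ for $k\leq n-1$ whenever $s\geq r$. As $X$ is contractible, $X^{\leq s}$ is therefore $(n-1)$-connected for all $s\geq r$, which makes the filtration essentially $(n-1)$-connected (given $i$, take $j$ with $r_j\geq\max\{r_i,r\}$, so $X^{\leq r_j}$ is $(n-1)$-connected). Brown's criterion now gives type $F_n$, and since $n$ was arbitrary, type $F_\infty$.

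I do not anticipate a genuine obstacle: the statement is standard, and the two cited theorems do all the real work. The points that do need care are (i) getting the connectivity indices to match up across the two results --- descending links $(n-1)$-connected, hence sublevel complexes eventually $(n-1)$-connected, hence type $F_n$; (ii) confirming that the hypotheses genuinely place us in the setting of each cited theorem, namely an affine cell complex with a Morse function in the sense of \cite{BestvinaBrady} for the Morse lemma, and a contractible $G$-complex with a $G$-cocompact filtration and finite-type cell stabilizers for Brown's criterion; and (iii) the small bookkeeping point that a sublevel complex is $G$-invariant only by virtue of $\phi$ being $G$-invariant, so that ``finitely many orbits of cells'' in hypothesis~(1) translates into cocompactness of the terms of the filtration.
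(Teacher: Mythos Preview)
Your proposal is correct and is precisely the standard argument; the paper itself gives no proof at all, merely declaring the statement to be ``by now \ldots\ standard'' as a combination of the Bestvina--Brady Morse lemma and Brown's criterion. Your write-up is a faithful unpacking of that combination, and your caveats in (i)--(iii) are exactly the points that need checking---in particular you are right to flag that $G$-invariance of $\phi$ is implicitly assumed (and holds in the application, where $\phi$ is the rank), and that finite vertex stabilizers yield finite cell stabilizers since each cell has finitely many vertices.
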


Now, define a Morse function $\phi$ on our polysimplicial complex $\Knew$ by defining the value of $\phi$ on each vertex to be its rank in the poset~$\P$, and then extending linearly to each polysimplex.  Since the endpoints of each edge in $\Knew$ have different ranks, $\phi$~is non-constant on each polysimplex of dimension one or greater, and thus $\phi$ is a valid Morse function.

To prove that $\VG$ has type $F_\infty$, we must prove that $\Knew$ satisfies conditions (1) through (3) of the above theorem.  We begin with condition~(1).

\begin{proposition}Each sublevel complex $\Knew^{\,\leq r}$ has finitely many $\VG$-orbits of cells.
\end{proposition}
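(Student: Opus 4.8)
The plan is to show that, up to the action of $\VG$, there are only finitely many cells in $\Knew^{\,\leq r}$ by reducing every cell to one based at a binary tree. First I would recall that a cell of $\Knew$ is a basic polysimplex $\stope(f,S_1,\ldots,S_n)$, and that $\phi$ takes its minimum over this cell at the vertex $[f]$ (rank $n$) and its maximum at the vertex obtained by replacing each $S_i$ by its top element; so the cell lies in $\Knew^{\,\leq r}$ if and only if this maximal rank is at most $r$. Since each $S_i$ raises the rank by at most $2$, a cell of $\Knew^{\,\leq r}$ can only be based at a vertex $[f]$ of rank $n \le r$, and then there are at most $6^n$ choices for the tuple $(S_1,\ldots,S_n)$. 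Hence it suffices to show that, for each fixed $n\le r$, the set of rank-$n$ vertices of $\P$ has finitely many $\VG$-orbits — in fact I claim there is exactly one orbit.

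The key step is the orbit count on vertices. Let $[f]$ be any vertex of rank $n$, so $f\colon C(1)\to C(n)$ lies in $\V\G$. By Proposition~\ref{prop:RoverForm} we may write $f = f_2^{-1}p_\beta(k_1\oplus\cdots\oplus k_n)t_1$ with $f_2,t_1$ binary forests, $\beta\in S_n$, and $k_i\in K$; since $p_\beta(k_1\oplus\cdots\oplus k_n)\in K_n$ we get $[f] = [f_2^{-1}t_1]$, and $f_2^{-1}t_1$ is an element of $\V$ from $C(1)$ to $C(n)$. So every rank-$n$ vertex has a representative in $\V$ of the form $t_2^{-1}t_1$ for binary trees... more precisely, writing $f_2^{-1}t_1 = s^{-1}p_\alpha s'$ via Proposition~\ref{thm:groupoid-V-correct-form} with $s\colon C(1)\to C(n)$ a binary tree (and $s'$ a forest), and noting $p_\alpha \in K_n$, we obtain $[f] = [s^{-1}s']$. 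Then $[f]\cdot(s'^{-1}s) = [s^{-1}s'\cdot s'^{-1}s]$; the issue is that $s'^{-1}s$ need not lie in $\VG$ unless it is an endomorphism of $C(1)$. Instead, the cleaner route: expand. By the argument of Proposition~\ref{prop:DirectedSet}, $[f]$ has an expansion that is just a binary tree $[t]$, say of rank $N$; and if $t\colon C(1)\to C(N)$ is a binary tree then $[t]\cdot t^{-1}$... again $t^{-1}\notin\VG$.

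So the right statement to prove is at the level of cells, not vertices: I would show directly that every cell of $\Knew$ is in the $\VG$-orbit of a cell based at a binary tree, and then observe that binary trees of a bounded number of leaves, carrying a bounded tuple $(S_1,\ldots,S_n)$, form a finite set. Concretely: given a cell $\stope(f,S_1,\ldots,S_n)$, choose, as above using Propositions~\ref{prop:RoverForm} and~\ref{thm:groupoid-V-correct-form}, a binary tree $t\colon C(1)\to C(n)$ with $[f]=[t]$; since the basic polysimplices at $[f]$ do not depend on the representative (remark after the definition of basic polysimplex), $\stope(f,S_1,\ldots,S_n)=\stope(t,S_1,\ldots,S_n)$, which is already based at a binary tree. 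Binary trees with $n$ leaves, $n\le r$, are finite in number, and for each there are at most $6^n$ tuples; hence $\Knew^{\,\leq r}$ has finitely many cells based at binary trees, but in fact this shows every cell equals one based at a binary tree, so $\Knew^{\,\leq r}$ has finitely many cells outright — a fortiori finitely many $\VG$-orbits.

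The main obstacle is the bookkeeping in the reduction to a binary-tree representative: one must verify carefully that after absorbing the $K_n$-factor the leftover element of $\V$ can be rewritten so that the ``upper'' forest becomes an honest binary tree with the same number of leaves $n$, i.e.\ that no additional splitting inflating $n$ is forced. This is exactly the content of combining Proposition~\ref{prop:RoverForm} with Proposition~\ref{thm:groupoid-V-correct-form} (the form $t_2^{-1}p_\alpha t_1$ with $t_1,t_2$ binary \emph{trees} of the same rank when $f$ maps $C(1)\to C(n)$, adapted to forests), and once that normal form is in hand the finiteness is immediate. Everything else — that $\phi$ attains its max at the ``top'' vertex of a polysimplex, that rank rises by at most $2$ per coordinate, and that representative-independence lets us swap $f$ for $t$ — is routine.
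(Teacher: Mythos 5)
Your final argument contains a genuine error. The reduction ``$f = f_2^{-1}p_\beta(k_1\oplus\cdots\oplus k_m)t_1$, hence $[f]=[f_2^{-1}t_1]$'' is not valid: the coset $[f]=K_nf$ is formed by multiplying on the left at the level of the \emph{range} $C(n)$ of $f$, whereas the factor $p_\beta(k_1\oplus\cdots\oplus k_m)$ sits in the middle of the normal form, acting on the intermediate object $C(m)$; it is separated from the range by $f_2^{-1}$ and cannot simply be absorbed into $K_n$. To absorb it one must first expand $[f]$ by composing with the forest $f_2$ (this is exactly what the proof of Proposition~\ref{prop:DirectedSet} does), and what one obtains is an \emph{expansion} of $[f]$, not another representative of $[f]$. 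The conclusion you draw cannot be repaired: it would force every rank-one vertex to equal $[\1]$, whereas the rank-one vertices are the cosets $Kg$ with $g\in\VG$, of which there are infinitely many since $\VG$ is infinite and $K$ is finite. So $\Knew^{\,\leq r}$ has infinitely many cells, and your claim that it has ``finitely many cells outright'' is false --- the proposition genuinely is about orbits.

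The correct argument is precisely the one you set aside as problematic. If $[f]$ and $[g]$ are two vertices of the \emph{same} rank $n$, then $f$ and $g$ both map $C(1)$ to $C(n)$, so $f^{-1}g$ maps $C(1)$ to $C(1)$ and hence lies in $\VG$; right multiplication by it carries $[f]$ to $[g]$ and carries $\stope(f,S_1,\ldots,S_n)$ to $\stope(g,S_1,\ldots,S_n)$. (Your worry that the transporting element might fail to lie in $\VG$ only arises when trying to move between vertices of \emph{different} ranks, which is never needed here.) Combined with the parts of your write-up that are correct --- that a cell of $\Knew^{\,\leq r}$ is based at a vertex of rank $n\leq r$ and that there are at most $6^n$ tuples $(S_1,\ldots,S_n)$ for each such $n$ --- this gives finitely many $\VG$-orbits of cells in each sublevel complex, which is exactly the paper's two-line proof.
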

\begin{proof}Note that any two vertices $[f],[g]\in\P$ of the same rank are in the same $\VG$-orbit, since $f^{-1}g\in\VG$ and $f^{-1}g$ maps $[f]$ to~$[g]$.  Therefore, each sublevel complex has only finitely many orbits of vertices.  More generally, observe that $f^{-1}g$ maps the cell $\stope(f,S_1,\ldots,S_n)$ to the cell $\stope(g,S_1,\ldots,S_n)$, and therefore each sublevel complex has only finitely many orbits of cells.
\end{proof}

This verifies condition (1), and condition (2) is the content of Proposition~\ref{prop:FiniteStabilizers}.  Therefore, all that remains is to show condition~(3) on the connectivity of the descending links.  Specifically, we must show that for each $k\in\N$, there exists an $n\in\N$ so that the descending link of each vertex in $\Knew$ of rank $n$ or greater is $k$-connected.  The proof of this condition is given in the next section.

\bigskip
\section{Descending Links}

In this section, we complete the proof that $\VG$ has type $F_\infty$ by analyzing the descending links of the polysimplicial complex $\Knew$.  Our approach is based on the following definition and theorem, which are due to the first author and Bradley Forrest~\cite{BelkForrest}, and have not previously appeared in published form.

\begin{definition}[Belk, Forrest]Let $X$ be a simplicial complex, and let $k\geq 1$.
\begin{enumerate}
\item A simplex $\Delta$ in $X$ is called a \newword{$\boldsymbol{k}$-ground} for $X$ if every vertex of $X$ is adjacent to all but at most $k$ vertices of~$\Delta$.\smallskip

\item We say that $X$ is \newword{$\boldsymbol(n,k)$-grounded} if there exists an $n$-simplex in $X$ that is a $k$-ground for~$X$.
\end{enumerate}
\end{definition}

Note that any sub-simplex of a $k$-ground for $X$ is again a $k$-ground for $X$.  Thus an $(n,k)$-grounded complex is also $(n',k)$-grounded for all $n' < n$.

For the following theorem, recall that a \newword{flag complex} is a simplicial complex $X$ with the property that every finite set of vertices that are pairwise joined by edges spans a simplex in~$X$.

\begin{theorem}[Belk, Forrest]\label{thm:Grounded}For $m,k\geq 1$, every finite $(mk,k)$-grounded flag complex is \mbox{$(m-1)$-connected}.
\end{theorem}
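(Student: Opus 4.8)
The plan is to argue by induction on $m$, using the $k$-ground simplex to build an explicit null-homotopy of cycles in one dimension at a time. Let $X$ be a finite flag complex with an $mk$-simplex $\Delta = \{w_0,\dots,w_{mk}\}$ that is a $k$-ground: every vertex of $X$ misses at most $k$ of the $w_i$'s. For the base case $m=1$ we must show $X$ is connected (i.e.\ $0$-connected): given any two vertices $u, v$, each is adjacent to all but at most $k$ vertices of a $k$-simplex $\Delta' \subseteq \Delta$ with $k+1$ vertices, so $u$ and $v$ each have a neighbor in $\Delta'$, and since $\Delta'$ is a simplex those neighbors are joined by an edge; hence $u$ and $v$ lie in the same component. (Here we need $|X| \ge 1$, which is implicit since $\Delta$ is nonempty.)

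For the inductive step, assume the statement for $m-1$ and let $X$ be $(mk,k)$-grounded via $\Delta$ with $mk+1$ vertices. We want to show $\pi_i(X) = 0$ for $i \le m-1$, so fix a simplicial map $\gamma \colon S^i \to X$ with $i \le m-1$; since everything is finite we may take the domain to be a finite simplicial sphere. The key construction is a \emph{coning} move relative to a sub-ground. Partition $\Delta$ into $m$ blocks each of size $k+1$ (using $mk+1 \le m(k+1)$ vertices, padding the last block if necessary with repeated or extra simplex vertices — more cleanly, choose $m$ disjoint $k$-faces $\Delta_1,\dots,\Delta_m$ inside $\Delta$, which is possible as $\Delta$ has $mk+1 > m\cdot k$ vertices when we only need $k+1$ vertices per block but can overlap bookkeeping; the honest count is that $\Delta$ contains $m$ pairwise-disjoint $(k-1)$-faces with room to spare, and we work with $k$-faces $\Delta_j$ of $k+1$ vertices using that $mk+1 \ge m k$, adjusting constants). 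For each vertex $v$ appearing in the image of $\gamma$, and for each block $\Delta_j$, the vertex $v$ is adjacent to all but at most $k$ vertices of $\Delta$, hence adjacent to all but at most $k$ vertices across all of $\Delta_1 \cup \cdots \cup \Delta_m$ combined; so $v$ fails to be adjacent to \emph{some entire block $\Delta_j$} in at most one... — more precisely, by pigeonhole $v$ is fully adjacent to at least $m-k\cdot(\text{something})$ of the blocks. The clean statement to aim for: since $v$ misses at most $k$ vertices of $\Delta$ total, and the blocks are disjoint, $v$ is completely joined to at least $m - k$... no: $v$ is completely joined to all but at most $k$ of the blocks is wrong; rather $v$ misses at most $k$ vertices so it is completely joined to all blocks except those few containing a missed vertex, i.e.\ to all but at most $k$ blocks. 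So if $m > k$ (which holds once $i \le m-1$ forces us into the regime $m \ge 2$, but we genuinely need $m \ge k+1$; note the theorem hypothesis with $i=m-1$ and grounding degree $mk$ is exactly calibrated so that $m$ blocks of size $k+1$ use $m(k+1) = mk+m \le$ available vertices only if... ) — this is the delicate counting I will need to get exactly right.

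Granting the combinatorial core — namely that every vertex in the image of $\gamma$ is completely joined to at least one of $m-k$ "good" blocks, each of which is a $k$-simplex, so $\mathrm{lk}(\text{that block})$ is $(m-k-1)$-connected... — I would instead run a cleaner induction: pick one vertex $w \in \Delta$, let $Y = \mathrm{lk}(w, X)$, observe $Y$ inherits an $(mk-1, k)$-grounding from $\Delta \setminus \{w\}$ (every vertex of $Y$ is adjacent to $w$ hence its non-neighbors among $\Delta\setminus\{w\}$ still number at most $k$), so by induction $Y$ is $(m-1-1)$-connected $= (m-2)$-connected. Now the star of $w$ is a cone, hence contractible, with boundary link $Y$; and the subcomplex $X \setminus \mathrm{st}(w)$ (full subcomplex on $V(X)\setminus\{w\}$) — since $\Delta\setminus\{w\}$ is an $(mk-1)$-simplex that is still a $k$-ground, it too is $(m-1)$... — wait, that would give the conclusion directly only if removing $w$ preserves enough connectivity, which requires a Mayer–Vietoris / van Kampen gluing: $X = \mathrm{st}(w) \cup (X \setminus \{w\})$ glued along $\mathrm{lk}(w) = Y$. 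Since $\mathrm{st}(w)$ is contractible and $Y$ is $(m-2)$-connected, the pair $(X, X\setminus\{w\})$ is $(m-1)$-connected by excision, so $X$ is $(m-1)$-connected provided $X \setminus \{w\}$ is — but $X\setminus\{w\}$ is a flag complex with $k$-ground $\Delta\setminus\{w\}$ of dimension $mk-1 = (m-1)k + (k-1) \ge (m-1)k$, hence $(mk-1,k)$-grounded, hence $(m-1)k$-grounded, hence by the inductive hypothesis (applied with $m$ unchanged but a vertex deleted) — this loops. The resolution is to induct on the \emph{number of vertices} of $X$ for fixed $m$: if $|V(X)| = mk+1$ then $X = \Delta$ itself is an $mk$-simplex, which is contractible, done; if $|V(X)| > mk+1$, pick $w \notin \Delta$, and glue as above using that $\mathrm{lk}(w,X)$ is $(m-2)$-connected by the outer induction on $m$ (its ground $\Delta$ has degree $mk > (m-1)k = (m-1)\cdot... $, precisely $\mathrm{lk}(w)$ is $((m-1)k, k)$-grounded? no—$w$ misses $\le k$ vertices of $\Delta$, so $\Delta \cap \mathrm{lk}(w)$ is an $(mk-k)$-simplex $= ((m-1)k)$-simplex, a $k$-ground for $\mathrm{lk}(w)$, so $\mathrm{lk}(w)$ is $((m-1)k,k)$-grounded, hence $(m-2)$-connected by the $m-1$ case), while $X\setminus\{w\}$ still contains $\Delta$ so is $(mk,k)$-grounded with fewer vertices, hence $(m-1)$-connected by the inner induction; van Kampen/Mayer–Vietoris over $\mathrm{st}(w) \cap (X\setminus\{w\}) = \mathrm{lk}(w)$ then yields $\pi_j(X) = 0$ for $j \le m-1$.

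\textbf{Main obstacle.} The subtle point is the gluing/excision step in low degrees: to conclude $\pi_1(X) = 0$ when $m \ge 2$ one needs the van Kampen theorem with the intersection $\mathrm{lk}(w)$ \emph{connected} (which is the $m-1 \ge 1$ case, fine) and for higher $\pi_j$ one needs the relative Hurewicz / homology-decomposition argument to promote "$\mathrm{st}(w)$ contractible and $\mathrm{lk}(w)$ is $(m-2)$-connected and $X\setminus\{w\}$ is $(m-1)$-connected" to "$X$ is $(m-1)$-connected" — this is a standard fact (a complex is $(m-1)$-connected iff its homology vanishes through degree $m-1$ and it is simply connected, and Mayer–Vietoris handles the homology) but stating it carefully with the base cases $m=1$ and the simple-connectivity bookkeeping is where the real work lies. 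The flag hypothesis is used only to ensure that when a vertex $v$ is pairwise adjacent to a set of vertices of $\Delta$, that set together with $v$ actually spans a simplex — needed implicitly so that $\mathrm{lk}(w) \cap \Delta$ is genuinely a simplex of $\mathrm{lk}(w)$ and that deleting/coning stays within the complex; I would make sure to invoke it precisely at that spot.
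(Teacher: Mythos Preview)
Your proposal (after the abandoned blocking attempt) is correct and is essentially the paper's own argument, just run in reverse: the paper starts from the ground simplex $\Delta$ and adds vertices one at a time, observing that each added vertex has link containing an $(m-1)k$-face of $\Delta$ and hence $((m-1)k,k)$-grounded, so $(m-2)$-connected by induction on $m$; you instead delete vertices down to $\Delta$ using the identical link observation and the same gluing step $X = \mathrm{st}(w)\cup (X\setminus\{w\})$ along $\mathrm{lk}(w)$. The ``main obstacle'' you flag---that a cone on an $(m-2)$-connected space glued to an $(m-1)$-connected space along that $(m-2)$-connected link yields an $(m-1)$-connected space---is exactly the (standard) fact the paper also uses without further comment, so your concern there is not a genuine gap.
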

\begin{proof}We proceed by induction on $m$.  For $m=1$, the statement is that every finite $(k,k)$-grounded flag complex is connected, which is clear from the definition.

Now suppose that every finite $(mk,k)$-grounded flag complex is $(m-1)$-connected, and let $X$ be a finite $\bigl((m+1)k,k\bigr)$-grounded flag complex.  Then we can filter $X$ as a chain of full subcomplexes
\[
\Delta = X_0 \subset X_1 \subset \cdots \subset X_p = X.
\]
where $\Delta$ is an $(m+1)k$-simplex that is a $k$-ground for~$X$, and each $X_i$ is obtained from $X_{i-1}$ by adding a single vertex~$v_i$.

Let $L_i$ denote the link of $v_i$ in $X_i$, and observe that each $X_i$ is homeomorphic to the union $X_{i-1} \cup_{L_i} CL_i$, where $CL_i$ denotes the cone on~$L_i$.  Since $\Delta$ is a \mbox{$k$-ground} for~$X$, we know that $L_i$ includes at least $mk+1$ vertices of~$\Delta$.  In particular, the intersection $L_i \cap \Delta$ contains an \mbox{$mk$-simplex}, which must be a $k$-ground for~$L_i$. By our induction hypothesis, it follows that each $L_i$ is \mbox{$(m-1)$-connected}.  Since $X_0 = \Delta$ is contractible, this proves that $X_i$ is \mbox{$m$-connected} for every~$i$, and in particular $X$ is \mbox{$m$-connected}.
\end{proof}

Now consider the complex $\Knew$.  We wish to show that the connectivity of the descending links in $\Knew$ goes to infinity.  That is, we wish to show that for each $k\in\mathbb{N}$, there exists an $n\in\mathbb{N}$ so that for any vertex $v$ in $\Knew$ of rank $n$ or greater, the descending link $\dlink(v)$ is $k$-connected.

If $v\in\P$, a vertex $w\in\P$ is called a \newword{contraction} of $v$ if $v$ is either a splitting or a double splitting of~$w$.  Note that the contractions of $v$ are in one-to-one correspondence with the vertices of~$\dlink(v)$ in~$\Knew$. We will use the following notation for contractions:
\begin{itemize}
\item If $[f]$ is a vertex of rank $n$ and $i,j\in\{1,\ldots,n\}$ are distinct, let
\[
[C_{ij}f] \;=\; [x_1^{-1}p_\alpha f],
\]
where $\alpha\in S_n$ is any permutation for which $\alpha(i) = 1$ and $\alpha(j) = 2$.\smallskip
\item If $[f]$ is a vertex of rank $n$ and $i,j,k\in\{1,\ldots,n\}$ are distinct, let
\[
[C_{ijk}f] \;=\; [x_1^{-1}x_1^{-1} p_\alpha f],
\]
where $\alpha\in S_n$ is any permutation for which $\alpha(i) = 1$, $\alpha(j) = 2$, and $\alpha(k) = 3$.
\end{itemize}
That is, $[C_{ij}f]$ is the contraction of $[f]$ obtained by joining intervals $i$ and $j$, while $[C_{ijk}f]$ is the contraction obtained by joining intervals $i$ and~$j$, and then joining the result with~$k$.  Note that these contractions do not depend on the chosen permutation~$\alpha$, although they do depend on the chosen representative $f$ of the vertex~$[f]$.

\begin{proposition}Let $[f]$ be a vertex.  Then every contraction of $[f]$ has the form
\[
[C_{ij} u_i u_j f],\qquad [C_{ij} \sigma_i u_i u_j f],\qquad\text{or}\qquad [C_{ijk} u_i u_j u_k f].
\]
for some distinct $i,j,k\in\{1,\ldots,n\}$, where each $u_s \in \{\1,b_s,c_s,d_s\}$.
\end{proposition}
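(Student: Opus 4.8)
The plan is to take a contraction $w$ of $[f]$, pin down how $[f]$ sits above $w$, and then transport everything onto the fixed representative $f\colon C(1)\to C(n)$. Fix a representative $h$ of $w$. By definition $[f]$ is a splitting or a double splitting of $[h]$, so Propositions~\ref{prop:ClassifySplittings} and~\ref{thm:double-splitting} give that one of the following holds for some index $s$:
\[
[f]=[x_s h],\qquad [f]=[\sigma_s x_s h],\qquad\text{or}\qquad [f]=[x_s x_s h].
\]
In each case I rewrite $w=[h]$ in terms of $f$. Writing the relevant identity as $(\text{splitter})\,h = p_\alpha(k_1\oplus\cdots\oplus k_n)\,f$ with $p_\alpha(k_1\oplus\cdots\oplus k_n)\in K_n$, where the splitter is $x_s$, $\sigma_s x_s$, or $x_s x_s$, and inverting it (using $\sigma_s^{-1}=\sigma_s$ in the twisted case) gives, respectively,
\[
w=[\,x_s^{-1}p_\alpha(k_1\oplus\cdots\oplus k_n)\,f\,],\quad
w=[\,x_s^{-1}\sigma_s\,p_\alpha(k_1\oplus\cdots\oplus k_n)\,f\,],\quad
w=[\,x_s^{-1}x_s^{-1}p_\alpha(k_1\oplus\cdots\oplus k_n)\,f\,].
\]

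The second step is to read off the three asserted forms. The key identity is that for \emph{every} $h'\colon C(1)\to C(n)$ in $\V\G$ one has $[\,x_s^{-1}p_\alpha h'\,]=[\,C_{ab}h'\,]$, where $a:=\alpha^{-1}(s)$ and $b:=\alpha^{-1}(s+1)$: the homeomorphisms $x_s^{-1}p_\alpha$ and $x_1^{-1}p_\beta$ (for any $\beta$ with $\beta(a)=1$, $\beta(b)=2$) differ only by a permutation homeomorphism of $C(n-1)$ applied on the left, which lies in $K_{n-1}$, while $C_{ab}$ does not depend on the chosen permutation; likewise $[\,x_s^{-1}x_s^{-1}p_\alpha h'\,]=[\,C_{abc}h'\,]$ with $a,b,c:=\alpha^{-1}(s),\alpha^{-1}(s+1),\alpha^{-1}(s+2)$. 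Applying this with $h'=(k_1\oplus\cdots\oplus k_n)f$, I then use the fact that $C_{ab}$ (resp.\ $C_{abc}$) absorbs any product of Klein-group elements supported on leaves other than $a,b$ (resp.\ $a,b,c$) into the coset on the left, since such a factor commutes past $p_\beta$ and, being disjoint from the leaves being joined, past the $x_1^{-1}$ in the definition of $C_{ab}$, remaining inside $K$. In the plain single and the double cases this leaves exactly
\[
w=[\,C_{ab}\,(k_a)_a(k_b)_b\,f\,]\qquad\text{and}\qquad w=[\,C_{abc}\,(k_a)_a(k_b)_b(k_c)_c\,f\,],
\]
where $(k_\ell)_\ell$ denotes $k_\ell$ acting on leaf $\ell$, an element of $\{\1,b_\ell,c_\ell,d_\ell\}$; these are the first and third forms. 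In the twisted case one first moves the $\sigma$, $\sigma_s p_\alpha = p_\alpha \sigma_a$ with $a=\alpha^{-1}(s)$, so that $\sigma_a$ multiplies the leaf-$a$ term of the direct sum, and the same absorption then produces $w=[\,C_{ab}\,\sigma_a\,(k_a)_a(k_b)_b\,f\,]$, the second form (with $a,b$, or $a,b,c$, playing the role of $i,j$, or $i,j,k$).

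I expect the only real difficulty to be the index bookkeeping of the second step: correctly tracking which pair or triple of leaves of $f$ becomes the joined leaf once $p_\alpha$ has been commuted past the direct sum, and, in the twisted case, confirming that the surviving $\sigma$ attaches to the first of the two joined leaves rather than to the second or to a leaf outside the join. All of this is elementary; the conceptual content is already contained in the case split furnished by Propositions~\ref{prop:ClassifySplittings} and~\ref{thm:double-splitting}.
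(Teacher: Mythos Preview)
Your proposal is correct and is essentially the approach the paper has in mind: the paper's own proof consists of the single sentence ``This is similar to the proofs of Propositions~\ref{prop:ClassifySplittings} and~\ref{thm:double-splitting},'' and what you have written is exactly the inversion of those arguments, carried out with the index bookkeeping made explicit. Your identification of $[\,x_s^{-1}p_\alpha h'\,]$ with $[C_{ab}h']$ (and its double-splitting analogue), together with the absorption of the Klein factors supported off the joined leaves, is precisely the content the paper leaves to the reader.
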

\begin{proof}This is similar to the proofs of Propositions~\ref{prop:ClassifySplittings} and Theorem~\ref{thm:double-splitting}.
\end{proof}

If $[f]$ is a vertex and $v$ is a contraction of $[f]$, we define the \newword{support} of $v$ (with respect to $f$) as follows:
\[
\supp(v) \;=\; \begin{cases}\{i,j\} & \text{if }v = [C_{ij}u_iu_jf]\text{ or } v = [C_{ij}\sigma_iu_i u_jf] \\ \{i,j,k\} & \text{if } v = [C_{ijk}u_iu_ju_kf].\end{cases}
\]
That is, the support of $v$ consists of those intervals which are joined together during the contraction.

\begin{lemma}\label{lemma:DisjointSimplex}Let\/ $[f]$ be a vertex, and let $v_1,\ldots,v_m$ be contractions of~$[f]$.  If the supports of $v_1,\ldots,v_m$ are disjoint, then $v_1,\ldots,v_m$ and $[f]$ all lie in an $m$-cube in~$\Knew$.
\end{lemma}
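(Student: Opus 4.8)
The plan is to argue that when the supports $\supp(v_1),\ldots,\supp(v_m)$ are pairwise disjoint, the contractions $v_1,\ldots,v_m$ behave completely independently, so that the corresponding basic polysimplex is a product of edges, i.e.\ a cube. First I would use the preceding proposition classifying contractions: each $v_s$ is of the form $[C_{ij}u_iu_jf]$, $[C_{ij}\sigma_iu_iu_jf]$, or $[C_{ijk}u_iu_ju_kf]$ for the indices in $\supp(v_s)$, with each $u_t\in\{\1,b_t,c_t,d_t\}$. Since the $u_t$'s appearing here lie in $K$ (they act on a single Cantor set as an element of the Klein four-group), each factor $u_t$ can be absorbed into the coset $[\,\cdot\,]$; in other words each $v_s$ equals $[C_{ij}f]$, $[C_{ij}\sigma_if]$, or $[C_{ijk}f]$ after absorbing those Klein-group corrections into $K_n$. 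So without loss of generality we may assume each $v_s$ is one of these three ``clean'' contractions on the index set $\supp(v_s)$.

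Next, I would observe that $[f]$ is a common splitting (or double splitting) of each $v_s$: concretely, applying the appropriate split or double-split homeomorphisms on the coordinates in $\supp(v_s)$ to $v_s$ returns $[f]$. Because the supports are disjoint, the binary-forest operations that recover $[f]$ from $v_s$ act on disjoint blocks of coordinates, hence commute and can be performed simultaneously. This lets me write $[f]$ as an elementary expansion of a single common vertex $w$, where $w$ is obtained from $[f]$ by performing all $m$ contractions at once on their (disjoint) supports, padding the untouched coordinates with $\1$. Formally, choosing a suitable representative and a suitable permutation to line up the supports, I would exhibit a homeomorphism $g$ with $w=[g]$ and with $[f]=[(u_1\oplus\cdots\oplus u_r)g]$ where the block of $u$'s corresponding to $\supp(v_s)$ is exactly the split/double-split pattern ($x_1x_1$, $x_1x\oplus\1$ up to permutation, etc.) that realizes the $s$-th contraction, and all other $u$'s are $\1$.

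With $w$ in hand, each index block contributes a factor $\{\1,x\}$ or $\{\1,\sigma_1x\}$ (the two one-edge splitting patterns) or $\{\1, x_1x\}$ in the double-split case; every such $S_i$ is a set of size two, so the basic polysimplex $\stope(g,S_1,\ldots,S_r)\cong\Delta^1\times\cdots\times\Delta^1$ is exactly an $m$-cube, and by construction it contains $w$, all the $v_s$, and $[f]$ as vertices. Invoking the earlier observation that basic polysimplices are genuine cells of $\Knew$ whose intersections are faces, this exhibits the desired $m$-cube in $\Knew$. The main obstacle I expect is the bookkeeping of the permutations: the notation $C_{ij}$, $C_{ijk}$ builds in auxiliary permutations $p_\alpha$, and to package the $m$ disjoint contractions into a single clean direct sum $(u_1\oplus\cdots\oplus u_r)g$ I must choose one global permutation simultaneously sorting all the supports to the front in blocks; verifying that the three contraction types survive this reshuffling (and that the Klein-group corrections really do vanish into $[\,\cdot\,]$, using identities like those in the proofs of Proposition~\ref{prop:ClassifySplittings} and Theorem~\ref{thm:double-splitting}) is the routine-but-delicate part, while the rest follows from disjointness and the product structure of basic polysimplices.
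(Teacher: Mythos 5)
Your proposal follows essentially the same route as the paper's proof: sort the (disjoint) supports to the front with a single global permutation, fold the Klein-group corrections into a new representative of $[f]$, and realize the cube as the basic polysimplex based at the fully contracted vertex, with each nontrivial $S_i$ a two-element set $\{\1,x\}$, $\{\1,\sigma_1x\}$, or $\{\1,x_1x\}$. One caution on your first step: the corrections $u_t$ are \emph{not} absorbed ``into the coset'' defining $v_s$ --- in general $[C_{ij}u_iu_jf]\neq[C_{ij}f]$, since the $u_t$ sit between the contraction and $f$ (indeed the paper notes that contractions depend on the chosen representative) --- rather, they are absorbed into the chosen representative of $[f]$ by replacing $f$ with $u_{i_1}\cdots u_{i_r}f$, which can be done simultaneously for all $v_s$ precisely because the supports are disjoint; your second paragraph's ``choosing a suitable representative'' does exactly this, so the argument goes through.
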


\begin{proof}We give a proof by example, from which the general procedure should be apparent.  Suppose $[f]$ is a vertex of rank~$9$, and suppose we are given three contractions of $f$:
\[
v_1 \;=\; [C_{27} u_2 u_7 f],\qquad v_2 \;=\; [C_{395}u_3u_9u_5f],\qquad\text{and}\qquad v_3 \;=\; [C_{41} \sigma_4 u_4 u_1f ],
\]
where each $u_i \in \{\1,b_i,c_i,d_i\}$.  The supports here are $\{2,7\}$, $\{3,5,9\}$, and $\{1,4\}$, respectively, so these three contractions have disjoint supports.

To construct a $3$-cube containing $[f]$, $v_1$, $v_2$, and $v_3$, we begin by choosing any permutation $\alpha\in S_9$ that agrees with the following table:
\[
\renewcommand{\arraystretch}{1.25}
\begin{array}{|c|c|c|c|c|c|c|c|}
\hline
x & 2 & 7 & 3 & 9 & 5 & 4 & 1 \\
\hline
\alpha(x) & 1 & 2 & 3 & 4 & 5 & 6 & 7 \\
\hline
\end{array}
\]
Let
\[
g \;=\; p_\alpha u_1 u_2 u_3 u_4 u_5 u_7 u_9f
\]
Then $[g] = [f]$, and
\[
v_1 \;=\; \bigl[x_1^{-1} g\bigr],\qquad v_2 \;=\; \bigl[x_3^{-1} x_3^{-1} g\bigr],\qquad\text{and}\qquad v_3 \;=\; \bigl[x_6^{-1} \sigma_6 g\bigr].
\]
Thus $[f]$, $v_1$, $v_2$, and $v_3$ are all contained in the $3$-cube
\[
\stope\bigl(\bigl[x_1^{-1}x_3^{-1}x_3^{-1}x_6^{-1}\sigma_6 g\bigr],\{\1,x\},\{1,x_1 x\},\{1,\sigma_1x\},\{\1\},\{\1\}\bigr)
\]
The same procedure works for any set of contractions with disjoint supports.
\end{proof}

\begin{lemma}\label{lemma:SupportTypes}Let\/ $[f]$ be a vertex in $\P$, let $v$ and $w$ be contractions of\/~$[f]$ whose corresponding vertices in $\dlink([f])$ are joined by an edge.  Then either one of the sets\/ $\supp_f(v)$ and\/ $\supp_f(w)$ is strictly contained in the other, or the two sets are disjoint.
\end{lemma}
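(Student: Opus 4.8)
The plan is to unwind what an edge in $\dlink([f])$ says combinatorially, and then to run through the very short list of $2$-cells of $\Knew$. By the definition of the descending link in a polysimplicial complex, the hypothesis means exactly that $v$, $w$, and $[f]$ are three distinct vertices of a common $2$-cell $P$ of $\Knew$; and since $v$ and $w$ are contractions of $[f]$, both have strictly smaller rank than $[f]$. Every $2$-cell of $\Knew$ is a basic polysimplex $\stope(g,S_1,\dots,S_n)$ with $\sum_i(|S_i|-1)=2$, so $P$ is either a simplicially subdivided triangle $\Delta^2$, arising from a single $S_i$ of size three, or a square $\Delta^1\times\Delta^1$, arising from two $S_i$'s of size two, with all remaining $S_i$ equal to $\{\1\}$. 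After conjugating by a permutation homeomorphism I may assume the nontrivial slot or slots are the first one, or the first two; this only changes the chosen representative of $[f]$, and since passing to another representative relabels all supports by a single fixed permutation of $\{1,\dots,n\}$, it affects neither nestedness nor disjointness of the supports.

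The second step is to pin down which vertex of $P$ is $[f]$ and which two are $v$ and $w$. In both cases every $S_i$ has $\1$ as least element and a unique greatest element, so $P$ has a unique vertex of maximal rank, its \emph{top}, obtained by choosing the greatest element in each slot. Since any contraction of a vertex has strictly smaller rank than that vertex, for $P$ to contain two distinct contractions of $[f]$ there must be at least two vertices of $P$ of strictly smaller rank than $[f]$ that are themselves contractions of $[f]$. A short rank count, combined with two observations about the square --- that its bottom vertex $[g]$ is not a contraction of its top (the top differs from $[g]$ by splittings in two distinct slots, so it is neither a single splitting nor a double splitting of $[g]$, hence not joined to it by an edge of $\Knew$), and that its two intermediate vertices are incomparable in the expansion order --- forces $[f]$ to be the top of $P$. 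Hence $\{v,w\}$ is determined: it consists of the two non-top vertices when $P$ is a triangle, and of the two intermediate vertices $[u_1g]$ and $[u_2g]$ when $P$ is a square, where $u_i$ denotes the greatest element of the $i$-th nontrivial slot.

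The last step is to compute the two supports relative to a convenient representative of $[f]$ and to read off the conclusion. If $P$ is a triangle, take $f=x_1x_1g$. The bottom vertex $[g]$ is the contraction of $[f]$ that merges the three intervals of $[f]$ coming from the double split of interval $1$ of $[g]$, so $\supp_f([g])=\{1,2,3\}$; the remaining non-top vertex is $[x_1g]$ or $[\sigma_1x_1g]$, and in either case it is the contraction of $[f]$ merging the two intervals of $[f]$ coming from a single (possibly twisted) split, so its support is $\{1,2\}$, using for the twisted case the identity $x_i\sigma_ix_i=p_{(i\ i+1)}x_ix_i$ established before Proposition~\ref{prop:ClassifySplittings}. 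Since $\{1,2\}\subsetneq\{1,2,3\}$, the two supports are nested. If $P$ is a square, take $f=(u_1\oplus u_2\oplus\1^{(n-2)})g$, and let $a$ and $b$ be the numbers of leaves of $u_1$ and of $u_2$, each being $2$ or $3$. Then $[f]$ is obtained from $[u_1g]$ by a (possibly twisted or double) splitting of the interval in position $a+1$, and from $[u_2g]$ by a splitting of the interval in position $1$; hence $\supp_f([u_1g])=\{a+1,\dots,a+b\}$ while $\supp_f([u_2g])=\{1,\dots,a\}$, and these are disjoint. In all cases $\supp_f(v)$ and $\supp_f(w)$ are nested or disjoint, as claimed.

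The step I expect to require the most care is the last one: confirming that when a splitting is twisted (involves a factor of $\sigma$) or is a double splitting, the contraction one obtains has precisely the claimed set of merged intervals, with the $\sigma$ and the Klein-four elements $b$, $c$, $d$ contributing nothing to the support. This is exactly what the classification of splittings and double splittings (Proposition~\ref{prop:ClassifySplittings} and Theorem~\ref{thm:double-splitting}), together with the explicit rewritings of $x_ib_i$, $x_ic_i$, $x_id_i$ used earlier, provide, so no genuinely new ingredient is needed.
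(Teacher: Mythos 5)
Your proposal is correct and takes essentially the same route as the paper's proof: observe that $v$, $w$, and $[f]$ span a common $2$-cell of the sublevel complex with $[f]$ as its top vertex, then treat the triangle case (nested supports) and the square case (disjoint supports) separately. You supply considerably more detail than the paper's very terse argument---in particular the rank count identifying $[f]$ as the top and the explicit computation of the two supports---but the underlying strategy is identical.
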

\begin{proof}Since the vertices corresponding to $v$ and $w$ share an edge in $\dlink([f])$,  the vertices $v$, $w$, and $[f]$ must all lie on a common $2$-cell in $\Knew^{\leq\rank(f)}$, which must be either a triangle or a square.  If it is a square then $\supp_f(v)$ and $\supp_f(w)$ must be disjoint.  If it is a triangle then, assuming $\rank(v) \leq \rank(w)$, we have $v=[g]$, $w=[x_ig]$ and $[f] = [x_ix_ig]$ for some $g$ and~$i$, and it follows that $\supp_f(w)$ is strictly contained in~$\supp_f(v)$.
\end{proof}

\begin{lemma}\label{lemma:FlagComplex}Let $f\colon C(1)\to C(n)$ be a homeomorphism in $\V\G$.  Then the descending link\/ $\dlink([f])$ is a flag complex.
\end{lemma}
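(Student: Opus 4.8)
The plan is to show directly that if $v_1, \ldots, v_m$ are contractions of $[f]$ whose corresponding vertices in $\dlink([f])$ are pairwise joined by edges, then $v_1, \ldots, v_m$ span a simplex. By Lemma~\ref{lemma:SupportTypes}, for each pair $i \neq j$ the supports $\supp_f(v_i)$ and $\supp_f(v_j)$ are either disjoint or one is strictly contained in the other. So the first step is purely combinatorial: I would show that a family of finite sets that is pairwise ``laminar'' in this sense (any two are nested or disjoint) can be totally ordered by refinement into blocks --- that is, I would argue that after relabeling, the $v_i$'s break up into groups, where within each group the supports form a strictly increasing chain $\supp_f(v_{i_1}) \subsetneq \supp_f(v_{i_2}) \subsetneq \cdots$, and supports in different groups are disjoint. (More carefully: the maximal supports are pairwise disjoint, and every support is contained in a unique maximal one.)

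The second step is to handle a single chain. Suppose $v_1, \ldots, v_r$ are pairwise edge-joined with $\supp_f(v_1) \subsetneq \cdots \subsetneq \supp_f(v_r)$. From Lemma~\ref{lemma:SupportTypes} and its proof, a pair with nested supports comes from a triangle: the larger-support contraction $v_j$ is itself a single splitting of the vertex $[g]$ where $v_i = [g]$, and $[f]$ is the double splitting $[x_ix_ig]$. I expect this to force $r \leq 3$, since at each stage we are splitting the two intervals being joined, and after joining three intervals there is nothing left to split; more precisely the supports in such a chain have sizes drawn from $\{2,3\}$ with at most one of each, so a chain has length at most $2$, and the two contractions in a length-$2$ chain together with $[f]$ lie on a common triangle (hence span a $2$-simplex). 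Thus each group spans a simplex on its own, sitting inside a single basic polysimplex based at a suitable representative of $[f]$.

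The third step combines the groups. Here I would use Lemma~\ref{lemma:DisjointSimplex}, or rather a mild generalization of its proof: pick a representative $g$ of $[f]$, obtained by applying a single permutation homeomorphism and absorbing the relevant $u_s$'s, so that \emph{all} of the $v_i$'s are simultaneously expressed as $[x_{\ell}^{-1}g]$, $[x_{\ell}^{-1}x_{\ell}^{-1}g]$, or $[x_{\ell}^{-1}\sigma_{\ell}g]$ for disjoint index blocks $\ell$ --- one block per \emph{maximal} support, with the (at most two) contractions sharing that maximal support handled by the length-$2$ chain analysis within that block. Then all the $v_i$'s and $[f]$ lie in a single basic polysimplex $\stope(g', S_1, \ldots, S_n)$ for appropriate choices of the $S_s$ from the allowed list, and since the full subcomplex of $\K$ spanned by the vertices of a basic polysimplex is exactly that basic polysimplex --- in particular any pairwise-adjacent set of its vertices spans a simplex --- we conclude that $v_1, \ldots, v_m$ span a simplex in $\dlink([f])$, as required.

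The main obstacle I anticipate is the bookkeeping in the third step: one must choose a single representative $g$ that works for all the contractions at once, which requires checking that the permutation and the absorbed Klein-group elements needed for the different disjoint blocks do not interfere, and that a chain of two nested-support contractions really does embed in one basic polysimplex (this is where the list of allowed $S_i$ --- in particular $\{\1, x, x_1x\}$ and $\{\1, \sigma_1 x, x_1 x\}$ --- is used). The laminar-family combinatorics of the first step and the ``chains have length $\leq 2$'' claim of the second step are routine once set up correctly.
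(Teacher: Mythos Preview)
Your proposal is correct and follows essentially the same approach as the paper. The paper's proof selects the contractions with maximal support, notes (via Lemma~\ref{lemma:SupportTypes}) that these have pairwise disjoint supports, invokes Lemma~\ref{lemma:DisjointSimplex} to place them with $[f]$ in a cube $\stope(g,\{\1,u_1\},\ldots,\{\1,u_m\},\{\1\},\ldots,\{\1\})$, and then observes that for each maximal $v_i'$ there is at most one non-maximal $v_k$ lying strictly between $v_i'$ and $[f]$, which enlarges the factor $\{\1,u_i\}$ to $\{\1,u_i',u_i\}$ and yields a single basic polysimplex containing everything---exactly your third step. Your laminar-family organization and ``chains have length at most~$2$'' claim are just a slightly more explicit packaging of the same observations (note your initial ``$r\le 3$'' should read $r\le 2$, as you yourself conclude a line later).
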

\begin{proof}Let $v_1,\ldots,v_r$ be contractions of $[f]$, and suppose that the corresponding vertices of $\dlink([f])$ are all connected by edges in the $1$-skeleton.  Let $\{v_1',\ldots,v_m'\}$ be the subset of $\{v_1,\ldots,v_r\}$ consisting of vertices with maximal support.
Then the supports of $v_1',\ldots,v_m'$ with respect to $f$ must be disjoint, so by Lemma~\ref{lemma:DisjointSimplex} $v_1',\ldots,v_m'$ and $[f]$ all lie in an $m$-cube in~$\Knew$.  This cube can be written as
\[
\stope(g,\{\1,u_1\},\ldots,\{\1,u_m\},\{\1\},\ldots,\{\1\}),
\]
where
\[
v_i' \;=\; \bigl[(u_1\oplus\cdots \oplus \1 \oplus\cdots \oplus u_m \oplus \1 \oplus \cdots\oplus \1)g\bigr]
\]
for each~$i$.

Now, if $v_k$ is not maximal, then $v_k$ will not be a vertex of this cube.  However, for each $v_i'$ there exists at most one $v_k$ so that $v_i' < v_k < [f]$ is an elementary simplex, since no two such $v_k$'s have disjoint supports.  In this case, there exists a $u_i' \in \{x,\sigma_1x\}$ so that
\[
v_k \;=\; \bigl[(u_1\oplus\cdots \oplus u_i' \oplus\cdots \oplus u_m \oplus \1 \oplus \cdots\oplus \1)g\bigr].
\]
Let $S_i = \{\1,u_i',u_i\}$ in this case, and let $S_i = \{1,u_i\}$ otherwise.  Then the polysimplex
\[
\stope(g,S_1,\ldots,S_m,\{\1\},\ldots,\{\1\}),
\]
contains all of the vertices $v_1,\ldots,v_r$ as well as $[f]$.
\end{proof}

We are now ready to analyze the connectivity of the descending links in $\Knew$.

\begin{proposition}Let $k\in\N$, and let $v$ be a vertex of $\Knew$ of rank at least~$\mbox{6k+2}$.  Then $\dlink(v)$ is $(k-1)$-connected.
\end{proposition}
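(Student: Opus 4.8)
The plan is to apply Theorem~\ref{thm:Grounded} to the descending link $\dlink(v)$, using the structure established in Lemmas~\ref{lemma:DisjointSimplex}--\ref{lemma:FlagComplex}. We already know from Lemma~\ref{lemma:FlagComplex} that $\dlink(v)$ is a flag complex, so it suffices to exhibit an $(mk,k)$-ground with $m = k+1$ when $\rank(v) \geq 6k+2$; then Theorem~\ref{thm:Grounded} gives $m-1 = k$-connectivity. Actually, Theorem~\ref{thm:Grounded} is stated for \emph{finite} flag complexes, so a preliminary reduction is needed: since connectivity is detected by finitely many cells, any map from a sphere $S^j$ (with $j \leq k$) into $\dlink(v)$ has image in a finite subcomplex, and we can enlarge that to a finite full subcomplex $Y$ containing whatever grounding simplex we pick; it then suffices to show every finite full subcomplex of $\dlink(v)$ containing our chosen grounding simplex is $(mk,k)$-grounded, and apply the theorem to $Y$.

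First I would fix a representative $f\colon C(1)\to C(n)$ for $v$ with $n = \rank(v) \geq 6k+2$, and build an explicit large simplex $\Delta$ in $\dlink(v)$. The natural choice: take the contractions $C_{ij}f$ with disjoint supports $\{1,2\},\{3,4\},\{5,6\},\ldots$; by Lemma~\ref{lemma:DisjointSimplex} these (together with $[f]$) lie in a common cube, so the corresponding vertices span a simplex in $\dlink(v)$. Since $n \geq 6k+2$ we get at least $3k+1$ such pairwise-disjoint two-element supports, hence a simplex $\Delta$ on at least $3k+1$ vertices, i.e.\ of dimension at least $3k \geq (k+1)k$ once $k\geq\ldots$ — here I need to be a little careful with the arithmetic, since I want dimension $\geq mk = (k+1)k = k^2+k$, and $3k$ is not $\geq k^2+k$ for large $k$. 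So the simplex built purely from disjoint pairs is too small. The fix is to also use the triple contractions $C_{ijk}f$: a contraction with support of size $3$ uses up $3$ of the $n$ intervals, so with $n\approx 6k$ we still only get $\approx 2k$ disjoint triples. This suggests the right count is more subtle, and I should instead follow the paper's intended bound: with $\rank(v) \geq 6k+2$, partition $\{1,\ldots,n\}$ into blocks and use that each block of size $2$ or $3$ contributes a vertex; counting carefully, the disjoint-support simplex has enough vertices to be an $mk$-simplex with $m = k+1$, and the constant $6$ is exactly what makes $\lfloor (6k+2)/2\rfloor \geq mk+1$, i.e.\ $3k+1 \geq (k+1)k+1$ — which again fails. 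So the genuine mechanism must be different: I expect $\Delta$ is taken to be \emph{all} contractions of $[f]$ with support contained in a \emph{fixed} small index set, or the grounding constant tracks support-size $\leq 3$, giving the factor roughly $6 = 2\cdot 3$.

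The key steps, then, are: (1) choose $f$ and a large disjoint-support collection of contractions $v_1,\ldots,v_N$, with $N$ as large as $\rank(v)$ permits (roughly $\rank(v)/3$ when forced to use triples, $\rank(v)/2$ otherwise), forming a simplex $\Delta$ of dimension $N-1$; (2) show $\Delta$ is a $k$-ground, i.e.\ every vertex $w$ of $\dlink(v)$ — which is some contraction $[C_{?}(\cdots)f']$ with support of size $2$ or $3$ with respect to a representative $f'$ of $[f]$ — is adjacent to all but at most $k$ of the $v_i$: here the support of $w$ (with respect to a suitable representative) is a fixed set of size $\leq 3$, and by Lemma~\ref{lemma:SupportTypes} $w$ fails to be adjacent to $v_i$ only if $\supp(w)$ and $\supp(v_i)$ are neither nested nor disjoint, which happens for only boundedly many $i$ because the $\supp(v_i)$ are disjoint and $|\supp(w)| \leq 3$ — so in fact $w$ is non-adjacent to at most $3$ of the $v_i$, giving a $3$-ground, hence certainly a $k$-ground for $k\geq 3$ (and the small cases $k=1,2$ handled separately or absorbed); (3) verify $\dim\Delta \geq (k+1)k$, which is where the hypothesis $\rank(v) \geq 6k+2$ enters quantitatively and which forces the precise constant; (4) reduce to finite subcomplexes as above and invoke Theorem~\ref{thm:Grounded} with $m = k+1$ to conclude $\dlink(v)$ is $k$-connected. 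The main obstacle I anticipate is step~(3) together with step~(2): getting the bookkeeping exactly right so that the number of disjoint-support contractions one can fit into $\rank(v) = 6k+2$ intervals is at least $(k+1)k+1$, while simultaneously bounding by $k$ the number of $v_i$ that any given link-vertex can miss. The tension is that triple contractions are "wide" (support size $3$, eating into the count $N$) but also only "conflict" with a bounded number of the $v_i$; balancing these is exactly what pins down the coefficient $6$, and I would work out that the worst case is when all of $\Delta$'s vertices are \emph{pair} contractions (giving $N = \lfloor \rank(v)/2 \rfloor$ but I then need the ground constant, not $N$, to be what matters) — so more likely the clean statement is: take $\Delta$ from disjoint \emph{pairs} only, note every link vertex has support of size $\leq 3$ hence misses at most $2$ of the $v_i$ (since a $3$-set meets at most one disjoint pair non-trivially-and-non-nested... no, it can properly overlap two pairs), giving a ground constant of at most some absolute small number, and then the only real content is arranging $\dim \Delta$ large enough, i.e.\ $\lfloor \rank(v)/2\rfloor - 1 \geq$ the required dimension — and the mismatch with $6k+2$ tells me the ground constant must actually grow, or the required simplex dimension is $mk$ with a cleverer choice of $m$. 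I would resolve this by carefully re-reading the hypotheses of Theorem~\ref{thm:Grounded} and matching: with a $c$-ground of dimension $\geq mc$ one gets $(m-1)$-connectivity; choosing $c$ to be the true (absolute, say $\leq 3$) ground constant and $m$ so that $m-1 = k$, the needed dimension is $(k+1)c \leq 3(k+1) = 3k+3$, hence a simplex on $3k+4$ vertices, achievable as soon as $\lfloor \rank(v)/2 \rfloor \geq 3k+4$, i.e.\ $\rank(v) \geq 6k+8$ — close to, and of the same shape as, the stated $6k+2$, with the small discrepancy absorbed by handling support-size-$3$ contractions more efficiently. That reconciliation is the crux, and once it is in place the rest is the routine verification sketched above.
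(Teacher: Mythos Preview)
Your approach is essentially the paper's: build a simplex $\Delta$ in $\dlink(v)$ from disjoint pair contractions $[C_{12}f], [C_{34}f], \ldots$, observe that $\Delta$ is a $3$-ground (any contraction has support of size at most~$3$, hence meets at most three of the disjoint pairs, hence by Lemma~\ref{lemma:DisjointSimplex} is adjacent to all but at most three of the $w_i$), and invoke Theorem~\ref{thm:Grounded}.

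The discrepancy you wrestle with at the end --- arriving at $6k+8$ instead of $6k+2$ --- is not resolved by handling triple contractions more cleverly. It is a plain off-by-one: the proposition asks for $(k-1)$-connectivity, not $k$-connectivity. In Theorem~\ref{thm:Grounded} you therefore want $m-1 = k-1$, i.e.\ $m = k$, not $m = k+1$. With ground constant~$3$ this demands a $(3k,3)$-grounded complex, so a $3k$-simplex, hence $3k+1$ disjoint pairs, hence exactly $\rank(v) \geq 2(3k+1) = 6k+2$. That is the whole ``reconciliation''.

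Two minor points. First, the finiteness reduction is unnecessary: $\dlink(v)$ is already finite, since the contractions of a rank-$n$ vertex are parametrised by a pair or triple of indices from $\{1,\ldots,n\}$ together with elements of~$K$ (and possibly a~$\sigma$), all finite data. Second, for the ground estimate you only need the implication ``disjoint supports $\Rightarrow$ adjacent in $\dlink(v)$'', which is Lemma~\ref{lemma:DisjointSimplex}; Lemma~\ref{lemma:SupportTypes} is not required for this step.
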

\begin{proof}By Lemma \ref{lemma:FlagComplex}, the descending link $\dlink(v)$ is a flag complex.  We claim that $\dlink(v)$ is $(3k,3)$-grounded.  Let $f$ be a representative for~$v$, and let $w_1,\ldots,w_{3k+1}$ be the vertices $[C_{12}f],[C_{34}f],\ldots,[C_{6k+1,6k+2}]$.  Since the supports of the $w_i$'s are disjoint, by Lemma~\ref{lemma:DisjointSimplex} the corresponding vertices of $\dlink(v)$ form a \mbox{$3k$-simplex~$\Delta$}. Furthermore, if $w$ is any contraction of~$v$, then the support of $w$ is a set with at most three elements, which can intersect the supports of at most three different~$w_i$.  Then the vertex of $\dlink(v)$ corresponding to $w$ is connected to at least $(3k+1)-3$ vertices of~$\Delta$, which proves that $\dlink(v)$ is $(3k,3)$-grounded.  By Theorem~\ref{thm:Grounded}, we conclude that $\dlink(v)$ is $(k-1)$-connected.
\end{proof}

This concludes the proof of the Main Theorem.

\bigskip
\section*{Acknowledgments}

The authors would like to thank Collin Bleak, Kai-Uwe Bux, Elisabeth Fink,
Marco Marschler and Stefan Witzel for helpful conversations.
We would also like to thank the organizers of the 2013 LMS--EPSRC Durham Symposium on Geometric and Cohomological Group Theory,
as well as the University of Durham, the Universit\'e Paris Sud 11, the Universit\"{a}t Bielefeld
and the Universidade de Lisboa for their hospitality while this research was being
carried out. The first author would like to thank Bard College for the support he received during his junior sabbatical, and the second author gratefully acknowledges the Fondation Math\'ematique
Jacques Hadamard (ANR­-10-CAMP­-0151-02 - FMJH - Investissement d'Avenir)
and the Funda\c{c}\~ao para a Ci\^encia e a Tecnologia (FCT - PEst-OE/MAT/UI0143/2014)
for support received during the development of this work.

\bigskip
\bibliographystyle{plain}

\begin{thebibliography}{10}

\smallskip

\bibitem{BelkForrest}
J.~Belk and B.~Forrest. Finiteness properties of Julia set Thompson groups.  Preprint in preparation.

\smallskip
\bibitem{BeMa}
J.~Belk and F.~Matucci, Conjugacy and dynamics in Thompson's groups.
\textit{Geom. Dedicata} \textbf{169} (2014), 239--261.

\smallskip
\bibitem{BestvinaBrady}
M.~Bestvina and N.~Brady, Morse theory and finiteness properties of groups.  \textit{Invent.\ Math.}\ \textbf{129} (1997), no.~3, 445--470.

\smallskip

\bibitem{Brown1}
K.~Brown, Finiteness properties of groups.  \textit{J.~Pure Appl.\ Algebra} \textbf{44} (1987), no.~1--3, 45--75.

\smallskip

\bibitem{Brown2}
K.~Brown, The geometry of finitely presented infinite simple groups.  \textit{Algorithms and classification in combinatorial group theory}. Springer New York, 1992. 121--136.

\smallskip
\bibitem{BrTi}
F.~Bruhat and J.~Tits,
Groupes r\'eductifs sur un corps local: I. Donn\'ees radicelles valu\'ees,
\textit{Inst. Hautes \'{E}tudes Sci. Publ. Math.} No. \textbf{41} (1972), 5--251.

\smallskip

\bibitem{BrownGeoghegan}
K.~Brown and R.~Geoghegan, An infinite-dimensional torsion-free $FP_\infty$ group. \textit{Invent.\ Math.}\ \textbf{77} (1984), no~2, 367--381.

\smallskip

\bibitem{Bux}
K.~Bux, M.~Fluch, M.~Marschler, S.~Witzel, and M.~Zaremsky, The braided Thompson groups are of type $F_\infty$.  Preprint (2012). \arxiv{1210.2931}.

\smallskip

\bibitem{CFP}
J.~Cannon, W.~Floyd, and W.~Parry. Introductory notes on Richard Thompson's groups. \textit{Enseign.\ Math.}\ \textbf{42} (1996), no.~3--4, 215--256.

\smallskip

\bibitem{Cleary1}
S.~Cleary, Groups of piecewise-linear homeomorphisms with irrational slopes.  \textit{Rocky Mountain J.\ Math.}\ \textbf{25} (1995), no.~3, 935--955.

\smallskip

\bibitem{Cleary2}
S.~Cleary, Regular subdivision in $\Z\bigl[\frac{1+\sqrt{5}}{2}\bigr]$.  \textit{Illinois J.\ Math.}\ \textbf{44} (2000), no.~3, 453--464.

\smallskip

\bibitem{Farley1}
D.~Farley, Finiteness and CAT(0) properties of diagram groups. \textit{Topology} \textbf{42}, no.~5 (2003), 1065--1082.

\smallskip

\bibitem{Farley2}
D.~Farley, Actions of picture groups on $\mathrm{CAT}(0)$ cubical complexes. \textit{Geom.\ Dedicata} \textbf{110} (2005), 221--242.

\smallskip

\bibitem{Farley3}
D.~Farley, Homological and finiteness properties of picture groups.  \textit{Trans.\ Amer.\ Math.\ Soc.}\ \textbf{357} (2005), no.~9, 3567--3584.

\smallskip

\bibitem{FarleyHughes}
D.~Farley and B.~Hughes, Finiteness properties of some groups of local similarities.  Preprint (2012). \arxiv{1206.2692}.

\smallskip

\bibitem{Fluch}
M.~Fluch, M.~Marschler, S.~Witzel, and M.~Zaremsky, The Brin-Thompson groups $sV$ are of type $F_\infty$.  \textit{Pacific J.\ Math.}\ \textbf{266} (2013), no.~2, 283--295. \arxiv{1207.4832}.

\smallskip
\bibitem{Geoghegan}
R.~Geoghegan, \textit{Topological methods} in group theory,
Graduate Texts in Mathematics, \textbf{243}. Springer, New York, 2008. xiv+473 pp

\smallskip
\bibitem{GeogBart}
R.~Geoghegan and L.~Bartholdi, ``Self-similar groups and Thompson's group~$V$.'' Presentation by R.~Geoghegan at the Workshop on the Extended Family of R.~Thompson's Groups. University of St.~Andrews. 26~May~2014.

\smallskip
\bibitem{GuSa}
V.~Guba and M.~Sapir, Diagram groups,
\textit{Mem. Amer. Math. Soc.} \textbf{130} (1997), no. 620, viii+117 pp.

\smallskip

\bibitem{KMN}
D.~Kochloukova, C.~Martinez-Perez, and B.~Nucinkis, Cohomological finiteness properties of the Brin-Thompson-Higman groups $2V$ and $3V$.  \textit{Proc.\ Edinb.\ Math.\ Soc.} \textbf{56} (2013), no.~3, 777--804. \arxiv{1009.4600}.

\smallskip

\bibitem{MMN}
C.~Martinez-Perez, F.~Matucci, and B.~Nucinkis, Cohomological finiteness conditions and centralisers in generalisations of Thompson's group~$V$. Preprint (2013). \arxiv{1309.7858}.

\smallskip

\bibitem{Matui}
H.~Matui, Topological full groups of one-sided shifts of finite type. Preprint (2012). \arxiv{1210.5800}.

\smallskip

\bibitem{Nek}
V.~Nekrashevych, Cuntz-Pimsner algebras of group actions. \textit{J.\ Operator Theory} \textbf{52} (2004), no.~2, 223--249.

\smallskip

\bibitem{PDLH}
P.~de La Harpe, \textit{Topics in geometric group theory}.  University of Chicago Press, 2000.

\smallskip

\bibitem{Quillen}
D.~Quillen, Homotopy properties of the poset of nontrivial $p$-subgroups of a group. \textit{Adv.\ in Math.}\ \textbf{28} (1978), no.~2, 101--128.

\smallskip

\bibitem{Rover1}
C.~R\"{o}ver, Constructing finitely presented simple groups that contain Grigorchuk groups. \textit{J.~Algebra} \textbf{220} (1999), 284--313.

\smallskip

\bibitem{Rover2}
C.~R\"{o}ver, Abstract commensurators of groups acting on rooted trees. \textit{Geom.\ Dedicata} \textbf{94} (2002), 45--61.

\smallskip

\bibitem{Stein}
M.~Stein, Groups of piecewise linear homeomorphisms. \textit{Trans.\ Amer.\ Math.\ Soc.}\ \textbf{332} (1992), no.~2, 477--514.

\end{thebibliography}

\end{document}